\documentclass[pdflatex,sn-mathphys-num]{sn-jnl}
\usepackage{rotating}
\usepackage{subfig}
\usepackage{amsmath}
\usepackage[english]{babel}
\usepackage{setspace}
\usepackage{amsfonts}
\usepackage{graphicx}
\usepackage{epstopdf}
\usepackage{algorithm}
\usepackage{algorithmic}
\usepackage{mathtools}
\usepackage{multirow}
\usepackage{booktabs} 
\usepackage{makecell}
\usepackage{url}
\usepackage{cleveref}
\usepackage{amssymb}
\usepackage{cases}
\usepackage{mathdots}

\usepackage{tikz}
\usetikzlibrary{calc,arrows.meta}

\newcommand{\HPanelAWidth}{0.32815\linewidth}
\newcommand{\HPanelBWidth}{0.49185\linewidth}
\newcommand{\HPanelGapWidth}{0.120\linewidth}

\newcommand{\HViewDistance}{30}
\newcommand{\HVanishingX}{48}
\newcommand{\HVanishingY}{-18}

\def\Hreq{black!34}
\def\Hfront{black!34}
\def\Hside{black!34}
\def\Htop{black!34}

\tikzset{
  Hfig edge/.style={
    draw=black,
    line width=.55pt,
    line join=round,
    line cap=round
  },
  Hfig arrow/.style={
    draw=black,
    -{Stealth[length=3mm,width=2mm]},
    line width=.45pt
  },
  Hfig axis/.style={
    draw=black,
    -{Stealth[length=2.6mm,width=1.75mm]},
    line width=.45pt
  },
  Hfig label/.style={font=\large},
  Hfig step/.style={font=\bfseries\Large},
  Hfig recurrence/.style={font=\large},
  Hperspective/.cd,
  x/.store in=\Hcoordx,
  y/.store in=\Hcoordy,
  z/.store in=\Hcoordz
}

\tikzdeclarecoordinatesystem{Hproject}{%
  \tikzset{Hperspective/.cd,x=0,y=0,z=0,#1}%
  \pgfmathsetmacro{\Hden}{\HViewDistance+(\Hcoordz)}%
  \pgfmathsetmacro{\Hpx}{%
    (\HViewDistance*(\Hcoordx)+\HVanishingX*(\Hcoordz))/\Hden}%
  \pgfmathsetmacro{\Hpy}{%
    (\HViewDistance*(\Hcoordy)+\HVanishingY*(\Hcoordz))/\Hden}%
  \pgfpointxy{\Hpx}{\Hpy}%
}

\newcommand{\HPoint}[3]{(Hproject cs:x={#1},y={#2},z={#3})}

\newcommand{\HFrontFace}[6]{%
  \path[Hfig edge,fill=#6]
    \HPoint{#1}{#3}{#5}
    --\HPoint{#2}{#3}{#5}
    --\HPoint{#2}{#4}{#5}
    --\HPoint{#1}{#4}{#5}--cycle;
}

\newcommand{\HRightFace}[6]{%
  \path[Hfig edge,fill=#6]
    \HPoint{#1}{#2}{#4}
    --\HPoint{#1}{#2}{#5}
    --\HPoint{#1}{#3}{#5}
    --\HPoint{#1}{#3}{#4}--cycle;
}

\newcommand{\HTopFace}[6]{%
  \path[Hfig edge,fill=#6]
    \HPoint{#1}{#3}{#4}
    --\HPoint{#2}{#3}{#4}
    --\HPoint{#2}{#3}{#5}
    --\HPoint{#1}{#3}{#5}--cycle;
}

\newif\ifHpanelA

\newcommand{\HCommonSetup}{%
  \def\Hrows{15}
  \def\Hfrontwidth{5}
  \def\Hblockheight{5}
  \def\Hblocklayers{4}
  \def\Hdenseheight{5}
  \pgfmathtruncatemacro{\Hfirstk}{\Hdenseheight}%
  \pgfmathtruncatemacro{\Hterminalk}{\Hfirstk+\Hrows}%
  \pgfmathtruncatemacro{\Hlastk}{\Hterminalk+\Hblockheight}%
  \pgfmathtruncatemacro{\Hcoreend}{\Hterminalk+\Hfrontwidth}%
  \pgfmathtruncatemacro{\Hjcount}{\Hblocklayers+1}%
}

\newcommand{\HFrontBand}{%
  \pgfmathtruncatemacro{\Hbeforeterminal}{\Hterminalk-1}%
  \pgfmathtruncatemacro{\Hbottomrow}{\Hlastk-1}%
  \path[Hfig edge,fill=\Hreq]
    \HPoint{\Hfirstk}{\Hfirstk}{0}
    --\HPoint{\Hfirstk+\Hfrontwidth}{\Hfirstk}{0}
    \foreach \Hkk in {\Hfirstk,...,\Hbeforeterminal}{
      --\HPoint{\Hkk+\Hfrontwidth}{\Hkk+1}{0}
      --\HPoint{\Hkk+\Hfrontwidth+1}{\Hkk+1}{0}
    }
    --\HPoint{\Hcoreend}{\Hlastk}{0}
    \foreach \Hkk in {\Hbottomrow,...,\Hfirstk}{
      --\HPoint{\Hkk}{\Hkk+1}{0}
      --\HPoint{\Hkk}{\Hkk}{0}
    }
    --cycle;
}

\newcommand{\HLowerSlice}[1]{%
  \pgfmathtruncatemacro{\Hq}{#1}%
  \pgfmathtruncatemacro{\Hqp}{\Hq+1}%
  \ifHpanelA
    \def\Hextra{0}%
    \def\Hcolor{\Hreq}%
  \else
    \edef\Hextra{\Hq}%
    \ifnum\Hq=0
      \def\Hcolor{\Hreq}%
    \else
      \def\Hcolor{white}%
    \fi
  \fi
  \pgfmathtruncatemacro{\Houter}{\Hcoreend+\Hextra}%
  \pgfmathtruncatemacro{\Hbeforeterminal}{\Hterminalk-1}%

  \HRightFace{\Houter}{\Hterminalk}{\Hlastk}{\Hq}{\Hqp}{\Hcolor}%
  \ifHpanelA\else
    \ifnum\Hq>0
      \HFrontFace{\Houter-1}{\Houter}{\Hterminalk}{\Hlastk}{\Hq}{white}%
    \fi
  \fi

  \foreach \Hkk in {\Hterminalk,...,\Hfirstk}{%
    \pgfmathtruncatemacro{\Hright}{\Hkk+\Hfrontwidth+\Hextra}%
    \ifnum\Hkk<\Hterminalk
      \HRightFace{\Hright}{\Hkk}{\Hkk+1}{\Hq}{\Hqp}{\Hcolor}%
    \fi
    \ifnum\Hkk>\Hfirstk
      \HTopFace{\Hright-1}{\Hright}{\Hkk}{\Hq}{\Hqp}{\Hcolor}%
    \else
      \ifnum\Hq=\Hjcount
        \HTopFace{\Hfirstk}{\Hright}{\Hkk}{\Hq}{\Hqp}{white}%
      \fi
    \fi
    \ifHpanelA\else
      \ifnum\Hq>0
        \ifnum\Hkk<\Hterminalk
          \HFrontFace{\Hright-1}{\Hright}{\Hkk}{\Hkk+1}{\Hq}{white}%
        \fi
      \fi
    \fi
  }%
  \ifnum\Hq=0
    \HFrontBand
  \fi
}

\newcommand{\HDenseSlice}[1]{%
  \pgfmathtruncatemacro{\Hq}{#1}%
  \pgfmathtruncatemacro{\Hqp}{\Hq+1}%
  \pgfmathtruncatemacro{\Hdenselast}{\Hdenseheight-1}%
  \foreach \Hkk in {\Hdenselast,...,0}{%
    \ifnum\Hkk=0
      \ifHpanelA
        \HRightFace{\Hcoreend}{0}{1}{\Hq}{\Hqp}{\Hside}%
      \else
        \ifnum\Hq=0
          \HRightFace{\Hcoreend}{0}{1}{\Hq}{\Hqp}{\Hside}%
        \else
          \HRightFace{\Hcoreend+\Hq}{0}{1}{\Hq}{\Hqp}{white}%
        \fi
      \fi
      \HTopFace{0}{\Hcoreend}{0}{\Hq}{\Hqp}{\Htop}%
      \ifHpanelA\else
        \ifnum\Hq>0
          \HTopFace{\Hcoreend}{\Hcoreend+\Hq}{0}{\Hq}{\Hqp}{white}%
          \HFrontFace{\Hcoreend+\Hq-1}{\Hcoreend+\Hq}{0}{1}{\Hq}{white}%
        \fi
      \fi
    \else
      \HRightFace{\Hcoreend}{\Hkk}{\Hkk+1}{\Hq}{\Hqp}{\Hside}%
    \fi
    \ifnum\Hq=0
      \HFrontFace{\Hkk}{\Hcoreend}{\Hkk}{\Hkk+1}{0}{\Hfront}%
    \fi
  }%
}

\newcommand{\HDrawStructure}{%
  \HCommonSetup
  \ifHpanelA
    \pgfmathtruncatemacro{\Hlayers}{\Hblocklayers}%
  \else
    \pgfmathtruncatemacro{\Hlayers}{\Hblocklayers+1}%
  \fi
  \foreach \Hslice in {\Hlayers,...,0}{%
    \HLowerSlice{\Hslice}%
    \ifnum\Hslice<\Hjcount
      \HDenseSlice{\Hslice}%
    \fi
  }%
}

\newcommand{\HAxes}{%
  \draw[Hfig axis]
    \HPoint{0}{17}{0}--\HPoint{0}{17}{-1.8}
    node[Hfig label,anchor=north east] {$j$};
  \draw[Hfig axis]
    \HPoint{0}{17}{0}--\HPoint{3.6}{17}{0}
    node[Hfig label,anchor=west] {$\ell$};
  \draw[Hfig axis]
    \HPoint{0}{17}{0}--\HPoint{0}{21}{0}
    node[Hfig label,anchor=north] {$k$};
}

\newcommand{\HRecurrenceAnnotations}{%
  \node[Hfig step] at (18.2,-8.0) {step 1};
  \node[Hfig recurrence] at (18.2,-5.7)
    {$H_{j,0,\cdot}\longrightarrow H_{j+1,0,\cdot}$};
  \draw[Hfig arrow]
    \HPoint{12}{0}{4.4}--\HPoint{12}{0}{.7};

  \node[Hfig step] at (40.0,-3.6) {step 2};
  \node[Hfig recurrence,anchor=west] at (32.8,-.9)
    {$H_{\cdot,k,\cdot}\longrightarrow H_{\cdot,k+1,\cdot}$};
  \draw[Hfig arrow]
    \HPoint{32}{.5}{0}--\HPoint{32}{4.0}{0};

  \node[Hfig step,anchor=west] at (38.4,8.6) {step 3};
  \node[Hfig recurrence,anchor=west] at (35.0,11.7)
    {$H_{0,\cdot,\ell}\longrightarrow H_{0,\cdot,\ell+1}$};
  \node[Hfig recurrence,anchor=west] at (35.0,14.0)
    {$H_{1,\cdot,\ell}\longrightarrow H_{1,\cdot,\ell+1}$};
  \draw[Hfig arrow]
    \HPoint{30.5}{10.0}{0}--\HPoint{34.0}{10.0}{0};

  \node[Hfig step,anchor=west] at (35.4,23.0) {step 4};
  \node[Hfig recurrence,anchor=west] at (32.8,26.2)
    {$H_{j,\cdot,\cdot}\longrightarrow H_{j+1,\cdot,\cdot}$};
  \draw[Hfig arrow]
    \HPoint{27}{28}{5.0}--\HPoint{27}{28}{1.3};
}

\newcommand{\HRequiredMomentsTikZ}{%
  \begin{tikzpicture}[x=.22cm,y=-.22cm]
    \HpanelAtrue
    \HDrawStructure
    \HAxes
  \end{tikzpicture}%
}

\newcommand{\HRecurrenceClosedTikZ}{%
  \begin{tikzpicture}[x=.22cm,y=-.22cm]
    \HpanelAfalse
    \HDrawStructure
    \HRecurrenceAnnotations
  \end{tikzpicture}%
}

\newcommand{\HRequiredMomentsPanel}{%
  \raisebox{0.45cm}{%
    \resizebox{\HPanelAWidth}{!}{\HRequiredMomentsTikZ}%
  }%
}

\newcommand{\HRecurrenceClosedPanel}{%
  \resizebox{\HPanelBWidth}{!}{\HRecurrenceClosedTikZ}%
}

\providecommand{\weakfastfigwidth}{0.92\linewidth}

\tikzset{
  StairEdge/.style={draw=black,line width=.55pt,line join=round,line cap=round},
  StairArrow/.style={draw=black,-{Stealth[length=3mm,width=2mm]},line width=.45pt},
  StairLabel/.style={font=\large},
  StairStep/.style={font=\bfseries\Large},
  StairRecurrence/.style={font=\large}
}

\providecommand{\WeakFastViewDistance}{30}
\providecommand{\WeakFastVanishingX}{48}
\providecommand{\WeakFastVanishingY}{-18}

\tikzset{
  WeakFastProjection/.cd,
  x/.store in=\WeakFastCoordX,
  k/.store in=\WeakFastCoordK,
  q/.store in=\WeakFastCoordQ
}

\tikzdeclarecoordinatesystem{WeakFastProject}{%
  \tikzset{WeakFastProjection/.cd,x=0,k=0,q=0,#1}%
  \pgfmathsetmacro{\WeakFastDen}{\WeakFastViewDistance+(\WeakFastCoordQ)}%
  \pgfmathsetmacro{\WeakFastPX}{%
    (\WeakFastViewDistance*(\WeakFastCoordX)
      +\WeakFastVanishingX*(\WeakFastCoordQ))/\WeakFastDen}%
  \pgfmathsetmacro{\WeakFastPY}{%
    -(\WeakFastViewDistance*(\WeakFastCoordK)
      +\WeakFastVanishingY*(\WeakFastCoordQ))/\WeakFastDen}%
  \pgfpointxy{\WeakFastPX}{\WeakFastPY}%
}

\newcommand{\WeakFastPoint}[3]{%
  (WeakFastProject cs:x={#1},k={#2},q={#3})%
}

\newcommand{\WeakFastDrawLambdaBlock}{%
  \def\WeakFastRows{17}%
  \def\WeakFastDepthDivisions{5}%
  \pgfmathtruncatemacro{\WeakFastLastRow}{\WeakFastRows-1}%
  \pgfmathtruncatemacro{\WeakFastLastDivision}{\WeakFastDepthDivisions-1}%

  \foreach \r in {0,...,\WeakFastLastRow}{%
    \path[fill=\BaseFill,draw=none]
      \WeakFastPoint{\r}{\r}{0}
      --\WeakFastPoint{\WeakFastRows}{\r}{0}
      --\WeakFastPoint{\WeakFastRows}{\r+1}{0}
      --\WeakFastPoint{\r}{\r+1}{0}--cycle;
  }%
  \foreach \r in {0,...,\WeakFastLastRow}{%
    \path[fill=\BaseFill,draw=none]
      \WeakFastPoint{\WeakFastRows}{\r}{0}
      --\WeakFastPoint{\WeakFastRows}{\r}{\WeakFastDepthDivisions}
      --\WeakFastPoint{\WeakFastRows}{\r+1}{\WeakFastDepthDivisions}
      --\WeakFastPoint{\WeakFastRows}{\r+1}{0}--cycle;
  }%
  \path[fill=\BaseFill,draw=none]
    \WeakFastPoint{0}{0}{0}
    --\WeakFastPoint{\WeakFastRows}{0}{0}
    --\WeakFastPoint{\WeakFastRows}{0}{\WeakFastDepthDivisions}
    --\WeakFastPoint{0}{0}{\WeakFastDepthDivisions}--cycle;

  \draw[StairEdge]
    \WeakFastPoint{0}{0}{0}
    --\WeakFastPoint{\WeakFastRows}{0}{0}
    --\WeakFastPoint{\WeakFastRows}{0}{\WeakFastDepthDivisions};
  \draw[StairEdge]
    \WeakFastPoint{0}{0}{0}
    --\WeakFastPoint{0}{0}{\WeakFastDepthDivisions}
    --\WeakFastPoint{\WeakFastRows}{0}{\WeakFastDepthDivisions};

  \foreach \r in {1,...,\WeakFastLastRow}{%
    \draw[StairEdge]
      \WeakFastPoint{\r-1}{\r}{0}
      --\WeakFastPoint{\WeakFastRows}{\r}{0}
      --\WeakFastPoint{\WeakFastRows}{\r}{\WeakFastDepthDivisions};
  }%
  \draw[StairEdge]
    \WeakFastPoint{\WeakFastLastRow}{\WeakFastRows}{0}
    --\WeakFastPoint{\WeakFastRows}{\WeakFastRows}{0};

  \foreach \r in {0,...,\WeakFastLastRow}{%
    \draw[StairEdge]
      \WeakFastPoint{\r}{\r}{0}--\WeakFastPoint{\r}{\r+1}{0};
  }%

  \draw[StairEdge]
    \WeakFastPoint{\WeakFastRows}{0}{0}
    --\WeakFastPoint{\WeakFastRows}{\WeakFastRows}{0};
  \draw[StairEdge]
    \WeakFastPoint{\WeakFastRows}{0}{\WeakFastDepthDivisions}
    --\WeakFastPoint{\WeakFastRows}{\WeakFastRows}{\WeakFastDepthDivisions};
  \draw[StairEdge]
    \WeakFastPoint{\WeakFastRows}{\WeakFastRows}{\WeakFastDepthDivisions}
    --\WeakFastPoint{\WeakFastRows}{\WeakFastRows}{0};

  \foreach \q in {1,...,\WeakFastLastDivision}{%
    \draw[StairEdge]
      \WeakFastPoint{0}{0}{\q}
      --\WeakFastPoint{\WeakFastRows}{0}{\q}
      --\WeakFastPoint{\WeakFastRows}{\WeakFastRows}{\q};
  }%
}

\newcommand{\WeakFastDrawLambdaAnnotations}{%
  \draw[StairArrow]
    \WeakFastPoint{22.9}{3.1}{0}--\WeakFastPoint{22.9}{10.8}{0};
  \node[StairStep,anchor=west] at (24.1,-7.0) {step 2};
  \node[StairRecurrence,anchor=west] at (23.7,-9.8)
    {$\tilde{\Lambda}_{\cdot,k,\cdot}\to\tilde{\Lambda}_{\cdot,k+1,\cdot}$};

  \begin{scope}[shift={(-3.5,3)}]
  \coordinate (Oright) at (WeakFastProject cs:x=4.4,k=12.8,q=0);
  \draw[StairArrow] (Oright)--\WeakFastPoint{9.2}{12.8}{0};
  \draw[StairArrow] (Oright)--\WeakFastPoint{4.4}{17.7}{0};
  \draw[StairArrow] (Oright)--\WeakFastPoint{4.4}{12.8}{-2};
  \node[StairLabel] at (10.25,-12.4) {$\ell$};
  \node[StairLabel] at (5.85,-17.35) {$k$};
  \node[StairLabel] at (0.3,-15.05) {$i$};
  \end{scope}
}

\newcommand{\WeakFastRecurrenceTikZ}{%
    \begin{tikzpicture}[x=0.28cm,y=0.28cm]
      \def\BaseFill{black!34}

      \begin{scope}[shift={(0,0)}]
        \foreach \j in {0,...,5}{
          \path[fill=\BaseFill,draw=none] (0,-\j) rectangle (14,-\j-1);
        }
        \foreach \j in {0,...,4}{
          \pgfmathtruncatemacro{\rowend}{19-\j}
          \path[fill=white,draw=none] (14,-\j) rectangle (\rowend,-\j-1);
        }

        \foreach \j in {0,...,5}{
          \pgfmathtruncatemacro{\rowend}{19-\j}
          \draw[StairEdge] (0,-\j) -- (\rowend,-\j);
        }
        \draw[StairEdge] (0,-6) -- (14,-6);
        \draw[StairEdge] (0,0) -- (0,-6);
        \draw[StairEdge] (14,0) -- (14,-6);
        \draw[StairEdge]
          (19,0) -- (19,-1) -- (18,-1) -- (18,-2)
          -- (17,-2) -- (17,-3) -- (16,-3) -- (16,-4)
          -- (15,-4) -- (15,-5) -- (14,-5);
        \foreach \j in {0,...,5}{
          \pgfmathtruncatemacro{\rowend}{19-\j}
          \draw[StairEdge] (0,-\j) -- (\rowend,-\j);
        }

        \draw[StairArrow] (-0.85,-0.7) -- (-0.85,-4.7);
        \node[StairStep,anchor=east] at (-1.75,-1.45) {step 1};
        \node[StairRecurrence,anchor=east] at (-1.0,-4.00)
          {$\tilde{H}_{j,0,\cdot}\to\tilde{H}_{j+1,0,\cdot}$};

        \coordinate (Oleft) at (1.0,-8.4);
        \draw[StairArrow] (Oleft) -- (4.7,-8.4);
        \draw[StairArrow] (Oleft) -- (1.0,-11.2);
        \node[StairLabel] at (5.4,-8.05) {$\ell$};
        \node[StairLabel] at (0.35,-11.8) {$j$};
      \end{scope}

      \draw[StairArrow] (22.4,-2.6) -- (34.8,-2.6);
      \node[StairRecurrence] at (28.6,-1.1) {contraction in $j$};
      \node[StairRecurrence] at (28.6,-5.75)
        {$\tilde{\Lambda}_{i,0,\ell}=\displaystyle\sum_{j=0}^{m}C_{ij}\tilde{H}_{j,0,\ell}$};

      \begin{scope}[shift={(39,0)}]
        \WeakFastDrawLambdaBlock
        \WeakFastDrawLambdaAnnotations
      \end{scope}
    \end{tikzpicture}%
}

\newcommand{\WeakFastRecurrenceGraphic}{%
  \resizebox{\weakfastfigwidth}{!}{\WeakFastRecurrenceTikZ}%
}

\Crefname{equation}{}{}
\renewcommand{\thesubfigure}{\Alph{subfigure}} 

\newtheorem{theorem}{Theorem}[section]
\newtheorem{lemma}{Lemma}[section]
\theoremstyle{definition}

\newtheorem{corollary}[theorem]{Corollary}

\crefname{corollary}{corollary}{corollaries}
\Crefname{corollary}{Corollary}{Corollaries}

\theoremstyle{remark}

\def\mA{\mathcal{A}}
\def\mB{\mathcal{B}}
\def\mC{\mathcal{C}}
\def\mD{\mathcal{D}}

\def\mG{\mathcal{G}}

\def\mI{\mathcal{I}}

\def\mM{\mathcal{M}}

\def\mO{\mathcal{O}}

\def\mQ{\mathcal{Q}}
\def\mR{\mathcal{R}}
\def\mS{\mathcal{S}}

\def\mV{\mathcal{V}}

\def\md{\mathrm{d}}

\def\be{\boldsymbol{e}_0}
\def\bf{\boldsymbol{f}}

\def\bu{\boldsymbol{u}}

\def\bv{\boldsymbol{v}}

\graphicspath{{figures/}}
\begin{document}

\title[Fast spectral methods for Volterra integral equations]{Fast spectral methods for Volterra integral equations}

\author[1]{\fnm{Xi} \sur{Chen}}\email{chen.xi@mail.ustc.edu.cn}
\author[1]{\fnm{Kuan} \sur{Xu}}\email{kuanxu@ustc.edu.cn}

\affil[1]{\orgdiv{School of Mathematical Sciences}, \orgname{University of Science and Technology of China}, \orgaddress{\street{96 Jinzhai Road}, \city{Hefei}, \postcode{230026}, \state{Anhui}, \country{China}}}

\pacs[MSC Classification]{65R20, 45D05, 47A58, 45G10, 26A33}

\keywords{Volterra integral operators, restriction operators, operator approximation, spectral method, Chebyshev polynomials}

\abstract{\unboldmath\mathversion{normal}
We develop novel methods for the fast construction of spectral approximations to smooth linear and weakly singular Volterra integral operators and for the fast evaluation of smooth nonlinear Volterra integrals. The proposed methods employ the Duffy mapping, which transforms the original indefinite integral into a definite integral and maps the original causal triangular domain to a square, thereby avoiding any artificial extension of the kernel function. For smooth linear and weakly singular Volterra operators, the resulting approximations are almost-banded and lower-banded, respectively. The $N\times N$ principal finite section of these approximations can be constructed in $\mO(N)$ and $\mO(N^2)$ flops, respectively, via recurrence relations. For Volterra integrals with a smooth kernel that depends nonlinearly on a given function, recurrence relations lead to fast evaluation with a complexity of $\mO(N^2\log N)$ when the function is approximated by a degree-$N$ Chebyshev approximant. These spectral approximations and evaluation procedures give rise to fast and stable spectral methods for solving Volterra integral equations, as well as a broad class of problems involving Volterra integral operators. Extensive numerical experiments show that these spectral methods significantly outperform existing methods in terms of stability, speed, accuracy, and ease of implementation.}

\maketitle

\section{Introduction}\label{sec:intro}
In this paper, we develop fast algorithms for constructing spectral approximations to, or evaluating, Volterra integral operators, and based on these algorithms, devise fast spectral methods for solving Volterra integral equations that significantly outperform existing methods. The Volterra integral operators considered in the remainder of this paper include:
\begin{itemize}
\item \emph{smooth linear Volterra operator},
\begin{align}\label{eq:smooth_volterra}
(\mV u)(x)
=
\int_{-1}^{x} K(x,y)u(y)\,\md y,
\qquad -1\leqslant x\leqslant 1,
\end{align}
where the kernel $K(x,y)$ is a smooth function. Smooth linear Volterra operators commonly arise in linear Volterra integral and integro-differential equations. They also occur as linearized operators or as building blocks in more complicated mathematical models. In such applications, the same operator may need to be assembled, applied, or inverted, making an efficient spectral approximation particularly useful.

\item \emph{smooth nonlinear Volterra integral operator},
\begin{align}\label{eq:nonlinear_volterra}
(\mV u)(x)
=
\int_{-1}^{x} K\bigl(x,y,u(y)\bigr)\,\md y,
\qquad -1\leqslant x\leqslant 1,
\end{align}
where the integrand depends nonlinearly on $u(y)$. Such operators, also known as nonlinear Volterra integral operators of Urysohn type, arise in nonlinear Volterra integral and integro-differential equations.

\item \emph{weakly singular Volterra operator},
\begin{align}\label{eq:singular_volterra}
(\mV u)(x)
=
\int_{-1}^{x}
(x-y)^{-\mu}K(x,y)u(y)\,\md y,
\qquad  -1\leqslant x\leqslant 1,
\end{align}
where the kernel is the product of a smooth factor $K(x,y)$ and an integrable algebraic singularity of strength $\mu \in (0,1)$ at $x=y$. Operators of this kind include Abel-type and fractional integral operators.
\end{itemize}

The primary difficulty in approximating these operators is that their kernel functions are defined on the triangular causal domain $T^2=\{(x,y):-1\leqslant y\leqslant x\leqslant 1\}$ rather than on the canonical square $[-1,1]^2$. Thus, the success of spectral methods for either constructing approximations to Volterra integral operators or solving Volterra integral equations hinges on how accurately, efficiently, and stably the kernel function can be approximated. Moreover, the nonlinear and singular natures of \cref{eq:nonlinear_volterra,eq:singular_volterra}, respectively, make the problem even more challenging. For a nonlinear Volterra integral operator, the function $u(x)$ becomes part of the kernel, whereas for a weakly singular Volterra operator, the singular factor generally induces fractional endpoint behavior that cannot be effectively represented by ordinary polynomial bases.

Early works on solving smooth Volterra integral equations primarily employed collocation-based methods \cite{tan,she2}, resulting in dense matrices. Both the construction and solution of the resulting linear systems incur $\mO(N^3)$ complexity, which makes these methods impractical for large-scale problems. When the kernel is convolutional, i.e., when the variables $x$ and $y$ are coupled only through their difference $x-y$, fast spectral methods \cite{xu,hal1} are available via sparse spectral approximations to Volterra-type convolution operators. However, these methods do not apply to equations involving the general linear Volterra operator \cref{eq:smooth_volterra}. In \cite{hal1}, Hale also proposes a coefficient-based spectral method for general Volterra integral equations by assuming that the kernel is defined on the non-causal triangle $\tilde{T}^2 = \{(x,y):-1\leqslant x\leqslant y \leqslant 1\}$ by the same closed-form expression. Under this assumption, the kernel function can be approximated on $[-1, 1]^2$ by a bivariate polynomial approximant. With the same assumption, the pseudospectral method proposed by Driscoll \cite{dri,chebfun} reduces the construction complexity to $\mO(N^2)$ compared with the methods in \cite{tan,she2}. However, such an assumption may be neither reasonable nor viable---there are plenty of examples in which the kernel function is singular or discontinuous in $\tilde{T}^2$. See \cref{sec:example_2} for an example. More recently, Gutleb and Olver \cite{gut1} proposed a sparse spectral method that approximates the kernel function by bivariate polynomials orthogonal on $T^2$, followed by the construction of an approximation to the Volterra operator via an operator-valued Clenshaw algorithm. For a kernel approximant of degree $n$ and a polynomial solution of degree $N$, the resulting matrix approximation to a smooth linear Volterra operator is an infinite-dimensional banded matrix with bandwidths $(n,n)$, for which the construction and solution complexities are $\mO(n^3N)$ and $\mO(n^2N)$, respectively. Hence, the method is significantly faster than collocation-based spectral methods. However, as shown in \cref{sec:example_3}, the operator-valued Clenshaw recurrence is numerically unstable, making the method unsuitable for problems involving complicated kernel functions.

Nonlinear Volterra integral equations are usually solved via Newton's method, where the main computational tasks are solving the linearized Volterra operator and evaluating the residuals. Since the former involves the Fr\'{e}chet derivative of the nonlinear Volterra integral operator with respect to the function $u(y)$, the resulting linearized operator can be rather complicated. One exception is the nonlinear Volterra integral operator of Hammerstein type:
\begin{align}\label{eq:nonlinear_volterra_subcase}
(\mV u)(x) = \int_{-1}^{x} K\bigl(x,y\bigr)F\bigl(y,u(y)\bigr)\,\md y, \qquad -1\leqslant x\leqslant 1,
\end{align}
where $F\bigl(y,u(y)\bigr)$ is a nonlinear function of $u(y)$. This explains why state-of-the-art methods, such as those in \cite{chebfun,gut2}, focus only on this much simpler case. When used to solve nonlinear Volterra equations, the \textsc{Chebfun} routine \texttt{volt} can be regarded as a nonlinear extension of Driscoll's method \cite{dri} and has a complexity of $\mO(N^3)$ for inversion. Its limitation again lies in the assumption that the closed-form expression on $T^2$ also applies to $\tilde{T}^2$, which is not always the case. The method proposed in \cite{gut2} is a natural extension of \cite{gut1} and is likewise subject to the instability issue. When used to evaluate a nonlinear Volterra integral, the \textsc{Chebfun} routine \texttt{volt} instead employs a conventional but stable approach that evaluates \cref{eq:nonlinear_volterra} by repeated quadrature with upper limits given by a set of interpolation points, such as Chebyshev points \cite{xu2}, at a cost of $\mO(N^3)$. This conventional method serves as the baseline in \cref{sec:nonlinear_volt}.

Solving a Volterra integral equation involving weakly singular kernel functions is difficult because the solution may be singular. Moreover, the solution may also be smooth, with its regularity unknown in advance. Early attempts to solve weakly singular Volterra integral equations include the collocation method \cite{che1}, which reformulates the original equation so that the solution to the reformulated equation is somewhat smoother. However, the convergence is usually only algebraic. The Galerkin spectral method proposed in \cite{li} embeds a mapping into the solution so that the mapped solution is sufficiently smooth to achieve spectral convergence. This method, however, applies only to singular Volterra integral equations with $\mu = p/q$, where $\mu$ is rational and $q$ is not large. For large $q$, the method suffers from numerical instability. More recently, the collocation spectral method proposed in \cite{hou} employs Jacobi fractional polynomials, also known as M\"{u}ntz polynomials, to approximate the functions involved. Since the method produces dense collocation matrices whose entries are obtained by quadrature, a cost of $\mO(N^3)$ for both construction and solution is inevitable.

The fast and numerically stable algorithms proposed in this paper either construct sparse spectral approximations to smooth linear and weakly singular Volterra integral operators or provide fast evaluation of smooth nonlinear Volterra integrals. In contrast to many existing methods, the proposed methods do not sample the kernel function in the non-causal triangle. By means of the so-called Duffy transform, the indefinite integrals \cref{eq:smooth_volterra,eq:nonlinear_volterra,eq:singular_volterra} are transformed into definite integrals in which $u(y)$ is restricted to a subinterval of $[-1,1]$. Meanwhile, the transformed kernel is well defined on $[-1,1]^2$ and can therefore be approximated by standard bivariate Chebyshev approximants. Various recurrence relations are derived for the resulting construction and evaluation procedures, leading to computational complexities significantly lower than those of existing methods. Furthermore, the methods are straightforward to implement. Although Jacobi polynomials play a role in the derivation, the actual computation involves only Chebyshev polynomials and points. Hence, no value-to-coefficient transform other than the discrete cosine transform (DCT) is required.

Let $n$ denote the highest degree of the polynomials used in the bivariate Chebyshev approximants for the transformed kernel functions. For the smooth linear Volterra operator, the resulting spectral approximation is an almost-banded matrix of infinite dimensions, for which the construction and inversion costs are $\mO(n^2\log n N)$ and $\mO(n^2 N)$, respectively. The proposed approximation to weakly singular linear Volterra operators applies to \cref{eq:singular_volterra} for any $\mu$ without requiring $\mu$ to be rational. The resulting infinite-dimensional approximation is lower-banded with lower bandwidth $n+p$, where $p = (1-\mu)/\gamma$ is a small integer. Hence, both the construction and solution costs are $\mO(n N^2)$. These spectral approximations lead to fast and well-conditioned spectral methods for Volterra integral equations with smooth or weakly singular kernels. The proposed method for evaluating smooth nonlinear Volterra integrals applies to the more general Urysohn class, with complexity $\mO(N^2 \log N)$. The Picard iteration, equipped with this fast evaluation procedure, provides an effective and efficient solver for nonlinear Volterra integral equations.

In the rest of this paper, we say that a matrix $L$ has bandwidths $(\xi_l, \xi_u)$ if its entries satisfy $L_{ij}=0$ for $i-j>\xi_l$ or $j-i>\xi_u$. We use $\mD$, $\mS$, and $\mM_k[f]$ to denote the infinite-dimensional matrices that approximate the differentiation, conversion, and multiplication operators, respectively \cite{olv}. More precisely, $\mD$ maps the Chebyshev coefficients of a given function to the $C^{(1)}$ ultraspherical coefficients of its derivative, while $\mS$ converts the Chebyshev coefficients of the same function to its $C^{(1)}$ ultraspherical coefficients. The matrix $\mM_k[f]$ represents multiplication by $f(x)$ or, more precisely, multiplication by the approximant to $f(x)$ in the $C^{(k)}$ ultraspherical basis, with $C^{(0)}$ understood as the Chebyshev polynomials of the first kind. When $f(x)$ is a polynomial of degree one, $\mM_0[f]$ is tridiagonal. The Kronecker delta is defined by $\delta_{ij}=1$ when $i=j$ and $\delta_{ij}=0$ otherwise. The Pochhammer symbol, also known as the rising factorial, is defined by $(a)_n=a(a+1)\cdots(a+n-1)$, with $(a)_0=1$. In addition, we shall frequently use the generalized Jacobi polynomials $P_n^{(-1, \beta)}(x)$ and $P_n^{(\beta, -1)}(x)$, which are defined by \cref{eq:jacobi_parity,eq:jacobi_minus_one_zero,eq:jacobi_minus_one_left}. Any polynomial of negative degree is taken to be zero.

In \Cref{sec:restrict}, we study the restriction operator, which is the key tool for constructing approximations to Volterra integral operators of all three types. Fast and stable methods for constructing spectral approximations to, or rapidly evaluating, \cref{eq:smooth_volterra,eq:nonlinear_volterra,eq:singular_volterra} are proposed in \Cref{sec:smooth_volt,sec:nonlinear_volt,sec:weaksingular_volt}, respectively. \Cref{sec:numerical_experiments} presents numerical experiments demonstrating the accuracy and efficiency of the proposed methods. The identities of Chebyshev and Jacobi polynomials used throughout the paper are collected in \Cref{sec:appendix}.

\section{Restriction operator}\label{sec:restrict}
For a smooth function $u(x)$ defined on $[-1,1]$, the restriction operator $\mC_{[a,b]}$ is defined by
\begin{align}
(\mC_{[a,b]}u)(x) = u\left( \frac{b-a}{2}x+\frac{b+a}{2} \right), \qquad x\in[-1,1],
\end{align}
where $[a,b]\subseteq[-1,1]$. In other words, the restriction operator takes the part of $u(x)$ defined on $[a,b]$ and stretches it to $[-1,1]$. It is linear and multiplicative:
\begin{align}\label{eq:restriction_linear_multiplicative}
\mC_{[a,b]}(\lambda_1 u+ \lambda_2v) &= \lambda_1\mC_{[a,b]}u+\lambda_2\mC_{[a,b]}v, \qquad \mC_{[a,b]}(uv) = (\mC_{[a,b]}u)(\mC_{[a,b]}v).
\end{align}

In this section, we construct a spectral approximation to the restriction operator using Chebyshev polynomials of the first kind as the basis. This infinite-dimensional matrix approximation will be used to develop fast algorithms for the Volterra integral operators \cref{eq:smooth_volterra,eq:nonlinear_volterra,eq:singular_volterra}.

\subsection{Recursive construction of the restriction operator}
Applying the restriction operator to the quasimatrix $\mathbf{T}(x) = \bigl(T_0(x),T_1(x),T_2(x),\ldots\bigr)$
gives
\begin{align}
\left(\mC_{[a,b]}\mathbf{T}\right)(x) = \mathbf{T}(x)\mathcal{R}(a,b),
\end{align}
where $\mathcal{R}(a,b)$ is an infinite-dimensional matrix that depends on $a$ and $b$. Let $R_{ij}(a,b)$ denote the $(i, j)$th entry of $\mathcal{R}(a,b)$. The orthogonality of the Chebyshev polynomials \cref{eq:chebyshev_orthogonality} leads to
\begin{equation}
R_{ij}(a,b)
    =
    \frac{2-\delta_{i0}}{\pi}
    \int_{-1}^1
    T_i(x)
    T_j\left(
        \frac{b-a}{2}x+\frac{b+a}{2}
    \right)
    \frac{\md x}{\sqrt{1-x^2}}.
\end{equation}
Since $T_0(x)=1$ and $T_1(x)=x$, the nonzero elements in the first two columns of $\mathcal R(a,b)$ are
\begin{align*}
R_{00}(a,b) = 1,\qquad R_{01}(a,b)=\frac{a+b}{2},\qquad R_{11}(a,b)=\frac{b-a}{2}.
\end{align*}
The $j$th column for $j \geqslant 2$ can be derived from a recurrence relation. 

\begin{theorem}\label{thm:restriction_recursion}
For $-1 \leqslant a < b \leqslant 1$, $\mathcal{R}(a,b)$ is upper triangular, and for $j\geqslant 1$ its columns satisfy
\begin{align}
R_{:,j+1}(a,b)
        &=
        2\mM_0
        \left[
            \frac{b-a}{2}x+\frac{b+a}{2}
        \right]
        R_{:,j}(a,b)
        -
        R_{:,j-1}(a,b).
        \label{eq:restriction_column_recursion_chebyshev}
\end{align}
Or, equivalently, for $j\geqslant 1$ the nonzero entries satisfy a four- or five-term recurrence relation
\begin{subequations}\label{eq:five_point_recurrence}
\begin{align}
        R_{0,j+1}
        &=
        \frac{b-a}{2}R_{1,j}
        +(a+b)R_{0,j}
        -R_{0,j-1},
        \\
        R_{1,j+1}
        &=
        (b-a)R_{0,j}
        +\frac{b-a}{2}R_{2,j}
        +(a+b)R_{1,j}
        -R_{1,j-1},
        \\
        R_{i,j+1}
        &=
        \frac{b-a}{2}
        \left(
            R_{i-1,j}+R_{i+1,j}
        \right)
        +(a+b)R_{i,j}
        -R_{i,j-1},
        \qquad 2\leqslant i\leqslant j+1,
\end{align}
\end{subequations}
where the arguments $(a,b)$ are omitted for notational simplicity.
\end{theorem}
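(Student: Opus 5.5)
Let $s(x)=\frac{b-a}{2}x+\frac{b+a}{2}$. Everything follows from the three-term recurrence $T_{j+1}(t)=2tT_j(t)-T_{j-1}(t)$ evaluated at $t=s(x)$, combined with the definition $\bigl(\mC_{[a,b]}\mathbf{T}\bigr)(x)=\mathbf{T}(x)\mathcal{R}(a,b)$. Read column-wise, the definition says $T_j(s(x))=\sum_i R_{ij}T_i(x)$, so the $j$th column $R_{:,j}$ is the Chebyshev coefficient vector of $T_j\circ s$.

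\emph{Upper triangularity.} Since $s$ is affine with nonzero slope (because $a<b$), $T_j\circ s$ is a polynomial of exact degree $j$. Its Chebyshev expansion therefore involves only $T_0,\dots,T_j$, which gives $R_{ij}=0$ for $i>j$. Alternatively, this follows from the orthogonality integral, since $T_i$ is orthogonal to all polynomials of lower degree. Because $b-a>0$, the diagonal entries are nonzero; they equal $2^{j-1}\bigl(\frac{b-a}{2}\bigr)^j$ for $j\geqslant1$, though this is not needed.

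\emph{Column recurrence.} Substituting $t=s(x)$ gives $T_{j+1}(s(x))=2s(x)T_j(s(x))-T_{j-1}(s(x))$. Taking Chebyshev coefficients of both sides, and using that $\mM_0[s]$ is by definition the matrix acting on Chebyshev coefficient vectors by multiplication by $s$, yields \cref{eq:restriction_column_recursion_chebyshev}. Since $s$ has degree one, $\mM_0[s]$ is tridiagonal, and this representation is exact; no approximation is involved.

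\emph{Entrywise recurrence.} This is the only step requiring care, namely getting the boundary rows right. I would use $xT_0=T_1$ and $xT_i=\frac12(T_{i-1}+T_{i+1})$ for $i\geqslant1$. Then for a coefficient vector $c$, the product $d=\mM_0[x]c$ has entries
\[
d_0=\tfrac12 c_1,\qquad d_1=c_0+\tfrac12 c_2,\qquad d_i=\tfrac12(c_{i-1}+c_{i+1})\quad (i\geqslant 2).
\]
The asymmetry at row $1$ comes from $T_0$ contributing fully to $T_1$. Consequently
\[
2\mM_0[s]=(b-a)\mM_0[x]+(a+b)I.
\]
Multiplying this out row by row gives the three displayed formulas in \cref{eq:five_point_recurrence}. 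For $i\geqslant2$, the range $i\leqslant j+1$ is justified by upper triangularity: all entries with $i>j+1$ vanish in column $j+1$, and the terms $R_{i,j-1}$ and $R_{i+1,j}$ vanish automatically when they are out of range. Rows $0$ and $1$ yield four-term relations, while interior rows give five-term relations, as the statement says.
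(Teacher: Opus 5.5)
Your argument is correct and is essentially the paper's own proof. Upper triangularity comes from the degree of $T_j\circ s$. The column recurrence comes from $T_{j+1}=2tT_j-T_{j-1}$ together with multiplicativity of restriction. The entrywise relations come from the tridiagonal $\mM_0[s]$, whose $(1,0)$ entry is $(b-a)/2$ rather than $(b-a)/4$.

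One small slip, in the aside you flagged as unneeded: matching leading coefficients gives $R_{jj}=\bigl(\frac{b-a}{2}\bigr)^j$, not $2^{j-1}\bigl(\frac{b-a}{2}\bigr)^j$. The factor $2^{j-1}$ also appears as the leading coefficient of $T_j(x)$ on the right-hand side, so it cancels. This agrees with $R_{j+1,j+1}=\frac{b-a}{2}R_{jj}$, which your own third row formula gives.
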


\begin{proof}
Since
\begin{align}\label{eq:restriction_column_definition}
        \left(\mC_{[a,b]}T_j\right)(x)
        =
        T_j\left(
            \frac{b-a}{2}x+\frac{b+a}{2}
        \right)
\end{align}
is a polynomial in $x$ of degree at most $j$, $R_{ij}(a,b)=0$ for $i>j$. Thus, $\mathcal{R}(a,b)$ is upper triangular.

Applying the recurrence relation \cref{eq:chebyshev_three_term} to \cref{eq:restriction_column_definition} and using the multiplicative property in \cref{eq:restriction_linear_multiplicative}, we obtain a recurrence relation for $\left(\mC_{[a,b]}T_j\right)(x)$. Taking the Chebyshev coefficients of both sides then gives \cref{eq:restriction_column_recursion_chebyshev}.

The multiplication matrix $\mM_0[(b-a)x/2+(b+a)/2]$ is tridiagonal, with main diagonal and superdiagonal entries being $(a+b)/2$ and $(b-a)/4$, respectively. The subdiagonal entries are $(b-a)/4$, except that the $(1, 0)$th entry is $(b-a)/2$. The entrywise recurrence relations \cref{eq:five_point_recurrence} follow from matching the nonzero entries of \cref{eq:restriction_column_recursion_chebyshev} rowwise.
\end{proof}

\Cref{thm:restriction_recursion} suggests that $\mathcal{R}(a,b)$ can be constructed columnwise. For the leading $N \times N$ section of $\mathcal{R}(a,b)$, this incurs an $\mO(N^2)$ complexity, which is asymptotically optimal. 

One application of the spectral approximation to the restriction operator is the solution of Volterra integral equations with proportional delay; see \cref{sec:example_4}.

\subsection{Analytic representations for $a=-1$ or $b = 1$}
Two important cases are $a=-1$ and $b = 1$. In fact, the entries of the spectral approximations to $\mC_{[-1,t]}$ and $\mC_{[t,1]}$ admit closed-form expressions.

\begin{theorem}\label{thm:restriction_analytical_expression}
For $t \in (-1,1]$, the nonzero entries of $\mR(-1, t)$ are given by
\begin{subequations}
\begin{align}
R_{00}(-1,t) &= 1, \label{eq:restriction_analytical_expression_00-1}\\
R_{ij}(-1,t) &= (2-\delta_{i0}) \frac{j}{i+j} \left(\frac{t+1}{2}\right)^i P_{j-i}^{(-1,\,2i)}(t), \qquad 0\leqslant i\leqslant j,\quad j\geqslant 1. \label{eq:restriction_analytical_expression_neg1_t}
\end{align}
\end{subequations}
For $t \in [-1,1)$, the nonzero entries of $\mR(t, 1)$ are given by
\begin{subequations}
\begin{align}
R_{00}(t,1) &= 1, \label{eq:restriction_analytical_expression_00+1}\\
R_{ij}(t,1) &= (2-\delta_{i0}) \frac{j}{i+j} \left(\frac{1-t}{2}\right)^i P_{j-i}^{(2i,\,-1)}(t), \qquad 0\leqslant i\leqslant j,\quad j\geqslant1.\label{eq:restriction_analytical_expression_t_1}
\end{align}
\end{subequations}
\end{theorem}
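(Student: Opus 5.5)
The plan is to compute the entries directly from the integral formula for $R_{ij}$, by expanding the restricted polynomial in powers of $(x+1)$ and then using closed-form Chebyshev coefficients of $(1+x)^k$. Set $s=(t+1)/2\in(0,1]$. For $a=-1$, $b=t$ the argument of $T_j$ is $s(x+1)-1$. Taylor expansion about $y=-1$ gives
\begin{align*}
T_j\bigl(s(x+1)-1\bigr)=\sum_{k=0}^{j}\frac{T_j^{(k)}(-1)}{k!}\,s^k(x+1)^k .
\end{align*}
The Taylor coefficients at the endpoint have the classical closed form
\begin{align*}
\frac{T_j^{(k)}(-1)}{k!}=(-1)^{j-k}\,\frac{j\,2^k\,(j+k-1)!}{(j-k)!\,(2k)!}, \qquad j\geqslant1.
\end{align*}
This follows from $T_j(y)=\,_2F_1(-j,j;\tfrac12;\tfrac{1-y}{2})$ together with $T_j(-y)=(-1)^jT_j(y)$.

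Next I use the half-angle substitution $x=\cos\theta$, under which $1+x=2\cos^2(\theta/2)$. Expanding $\cos^{2k}(\theta/2)$ in Fourier modes gives
\begin{align*}
(1+x)^k=2^{-k}\Bigl[\binom{2k}{k}+2\sum_{i=1}^{k}\binom{2k}{k-i}T_i(x)\Bigr].
\end{align*}
Substituting this and collecting the coefficient of $T_i$ gives, for $j\geqslant1$,
\begin{align*}
R_{ij}(-1,t)=(2-\delta_{i0})\sum_{k=i}^{j}\frac{T_j^{(k)}(-1)}{k!}\,2^{-k}\binom{2k}{k-i}s^k .
\end{align*}
This sum already shows the upper-triangular structure and the factor $s^i=\bigl(\tfrac{t+1}{2}\bigr)^i$. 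The case $R_{00}=1$ is trivial.

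It remains to identify the remaining polynomial in $s$ with $\tfrac{j}{i+j}P_{j-i}^{(-1,2i)}(t)$. I would reindex with $k=i+m$, $0\leqslant m\leqslant n:=j-i$, and simplify the product of factorials into Pochhammer symbols. This should give a terminating ${}_2F_1$ in the variable $s=(1+t)/2$. I would compare it with the standard representation expanded about $t=-1$,
\begin{align*}
P_n^{(\alpha,\beta)}(t)=(-1)^n\frac{(\beta+1)_n}{n!}\,{}_2F_1\Bigl(-n,\,n+\alpha+\beta+1;\,\beta+1;\,\tfrac{1+t}{2}\Bigr),
\end{align*}
with $\alpha=-1$ and $\beta=2i$. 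The upper parameter is then $n+2i=j+i$, which matches the $(j+k-1)!$ factors. The lower parameter is $2i+1$, which matches the factors $(2k)!/(k-i)!\,(k+i)!$ coming from $\binom{2k}{k-i}$. The prefactor $j/(i+j)$ should emerge from the ratio $(j+i-1)!/(j+i)!$ times $j$.

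The main obstacle is this hypergeometric bookkeeping, namely getting signs, powers of $2$ and the prefactor exactly right. A further subtlety is that $\alpha=-1$ is a generalized parameter. I must check that the paper's definition of $P_n^{(-1,\beta)}$ (\cref{eq:jacobi_minus_one_zero}) agrees with the ${}_2F_1$ formula above, which is well defined because the lower parameter is $\beta+1$, not $\alpha+1$. As a sanity check, I would verify the result against \cref{thm:restriction_recursion} for small $j$: at $t=1$, where $\mR=I$, the right-hand side should give $\delta_{ij}$.

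For $\mR(t,1)$ I would not redo the computation. The symmetry $x\mapsto -x$ gives
\begin{align*}
T_j\Bigl(\tfrac{1-t}{2}x+\tfrac{1+t}{2}\Bigr)=(-1)^jT_j\Bigl(\tfrac{1-t}{2}(-x)-\tfrac{1+t}{2}\Bigr).
\end{align*}
Hence $R_{ij}(t,1)=(-1)^{i+j}R_{ij}(-1,-t)$. Applying the parity relation $P_n^{(\alpha,\beta)}(-t)=(-1)^nP_n^{(\beta,\alpha)}(t)$ from \cref{eq:jacobi_parity} with $n=j-i$ cancels the sign $(-1)^{i+j}$ and yields the stated formula with $P_{j-i}^{(2i,-1)}(t)$ and the factor $\bigl(\tfrac{1-t}{2}\bigr)^i$.
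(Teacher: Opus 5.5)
Your proposal is correct, and the step you flag as the main obstacle does close. With $k=i+m$ and $n=j-i$, your sum becomes
\[
R_{ij}(-1,t)=(2-\delta_{i0})\,j\,s^i\sum_{m=0}^{n}(-1)^{n-m}\frac{(i+j+m-1)!}{(n-m)!\,m!\,(2i+m)!}\,s^m .
\]
Expanding $(-1)^n\frac{(2i+1)_n}{n!}\,{}_2F_1(-n,i+j;2i+1;s)$ termwise gives exactly $(i+j)$ times the same sum, so the factor $j/(i+j)$ appears as you predicted. Identifying the paper's $P_n^{(-1,\beta)}$ with this ${}_2F_1$ is legitimate. The standard representation about $t=-1$ is polynomial in $\alpha$, its lower parameter $\beta+1$ does not involve $\alpha$, and its value at $\alpha=-1$ agrees with \cref{eq:jacobi_minus_one_left} by the classical reduction formula for negative integer $\alpha$. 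The paper tacitly uses the same fact for $P_j^{(-1,0)}$.

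Your route, however, differs from the paper's. The paper treats $i=0$ separately, using the hypergeometric form of $T_j$ and beta integrals. For $1\leqslant i\leqslant j$ it applies the Chebyshev Rodrigues formula, integrates by parts $i$ times, and rewrites $T_j^{(i)}$ via $C^{(i)}_{j-i}$. It then evaluates a Gegenbauer integral against $(1-x^2)^{i-1/2}$ by beta integrals and \cref{eq:jacobi_minus_one_left}.

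You never integrate. The Taylor coefficients of $T_j$ at $-1$ and the half-angle expansion of $(1+x)^k$ reduce everything to a coefficient identity. This treats all $i$, including $i=0$, uniformly. It also yields an explicit finite-sum formula for the entries, which exhibits upper-triangularity and the factor $s^i$ before any Jacobi polynomial is named. What the paper's route buys in exchange is that the Jacobi parameter $2i$ arises structurally, from the weight produced by integration by parts, rather than being recognized afterwards by series matching.

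Your reduction of $\mR(t,1)$ to $\mR(-1,-t)$ via $R_{ij}(t,1)=(-1)^{i+j}R_{ij}(-1,-t)$ and \cref{eq:jacobi_parity} coincides with the paper's.
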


\begin{proof}
Because $(\mC_{[a,b]}T_0)(x)=T_0(x)$, \cref{eq:restriction_analytical_expression_00-1,eq:restriction_analytical_expression_00+1} hold. 

For $j\geqslant 1$, it follows from \cref{eq:chebyshev_orthogonality} that
\begin{align*}
R_{ij}(-1,t) = \frac{2-\delta_{i0}}{\pi} \int_{-1}^{1} T_i(x) T_j\left( \frac{t+1}{2}x+\frac{t-1}{2} \right) \frac{\md x}{\sqrt{1-x^2}}.
\end{align*}
For $i=0$, the hypergeometric representation of $T_j(x)$ and termwise integration using the beta integral give
\begin{align*}
\int_{-1}^{1} T_j\left(\frac{t+1}{2}x+\frac{t-1}{2} \right)
    \frac{\md x}{\sqrt{1-x^2}}
    =
    \pi(-1)^j
    {}_2F_1\left(
        -j,j;1;\frac{t+1}{2}
    \right)=
    \pi P_j^{(-1,0)}(t),
\end{align*}
where the last equality follows from the hypergeometric representation of the Jacobi polynomial~\cite[\S IV]{sze}. For $1\leqslant i\leqslant j$, applying \cref{eq:chebyshev_rodrigues}, integrating by parts $i$ times, and using \cref{eq:chebyshev_derivative} leads to
\begin{align*}
\int_{-1}^{1}
        T_i(x)
        T_j\left(
            \frac{t+1}{2}x+\frac{t-1}{2}
        \right)
        \frac{\md x}{\sqrt{1-x^2}}
        =
        \frac{2^{2i-1}i!(i-1)!j}{(2i)!}
        \left(\frac{t+1}{2}\right)^i I(t).
\end{align*}
Here, 
\begin{align*}
I(t) = \int_{-1}^{1}
        (1-x^2)^{i-\frac12}
        C_{j-i}^{(i)}\left(
            \frac{t+1}{2}x+\frac{t-1}{2}
        \right)
        \md x
        =
        \frac{
            2\sqrt{\pi}\,
            \Gamma\left(i+\frac12\right)
        }{
            (i-1)!(i+j)
        }
        P_{j-i}^{(-1,2i)}(t),
\end{align*}
which is obtained by using the hypergeometric representation of the Gegenbauer polynomial~\cite[\S IV]{sze}, the beta integral, and \cref{eq:jacobi_minus_one_left}. Thus, 
\begin{align}\label{eq:restriction_analytical_expression_chebyshev}
\int_{-1}^{1}
        T_i(x)
        T_j\left(
            \frac{t+1}{2}x+\frac{t-1}{2}
        \right)
        \frac{\md x}{\sqrt{1-x^2}}
        &=
        \frac{\pi j}{i+j}
        \left(\frac{t+1}{2}\right)^i
        P_{j-i}^{(-1,2i)}(t)
\end{align}
or, equivalently, \cref{eq:restriction_analytical_expression_neg1_t}. 

Replacing $x$ and $t$ by $-x$ and $-t$, respectively, and using the Jacobi symmetry relation \cref{eq:jacobi_parity} give \cref{eq:restriction_analytical_expression_t_1}.
\end{proof}

Here and throughout, $P_n^{(-1,\beta)}$ denotes the $n$th generalized Jacobi polynomial with the first parameter $-1$, whose definition follows \cref{eq:jacobi_minus_one_zero,eq:jacobi_minus_one_left}.


\section{Spectral approximation to smooth linear Volterra integral operators}\label{sec:smooth_volt}

To construct a spectral approximation to the smooth linear Volterra operator \cref{eq:smooth_volterra} in the Chebyshev basis, we apply it to $\mathbf{T}(x)$
\begin{align}\label{eq:volterra_matrix}
\left(\mV \mathbf{T}\right)(x) = \Bigl( \mV T_0(x)\, \Big|\, \mV T_1(x) \,\Big|\, \mV T_2(x) \,\Big|\, \cdots \,\Bigr)  = \mathbf{T}(x)\mG,
\end{align}
where $\mG$ is the infinite-dimensional matrix approximation we seek.

We introduce the symmetric bi-affine mapping
\begin{align}\label{eq:symmetric_affine_map}
    \psi(x,t)
    &:=
    \frac{x+1}{2}t+\frac{x-1}{2},
    \qquad (x,t)\in[-1,1]^2,
\end{align}
which satisfies $\psi(x,t)=\psi(t,x)$, and, for each $x\in[-1,1]$, maps $[-1,1]$ onto $[-1,x]$. Thus, for a given function $u(t)$ defined on $t\in[-1,1]$,
\begin{align}\label{eq:restriction}
u\bigl(\psi(x,t)\bigr)=(\mC_{[-1,x]}u)(t).
\end{align}
With \cref{eq:symmetric_affine_map}, the affinely rescaled Duffy map~\cite{duf}, also known as a collapsed-coordinate map, is defined as
\begin{align}\label{eq:duffy}
    (x,t)
    \mapsto
    \bigl(x,\psi(x,t)\bigr),
    \qquad (x,t)\in[-1,1]^2,
\end{align}
which maps the canonical square $[-1,1]^2$ onto the triangular domain $T^2$ by collapsing the edge $x=-1$ to the vertex $(-1,-1)$. With \cref{eq:duffy}, the smooth Volterra integral operator can be rewritten as
\begin{equation}\label{eq:duffy_volterra}
(\mV u)(x)
    =
    \frac{x+1}{2}
    \int_{-1}^1
    K\bigl(
        x,\psi(x,t)
    \bigr)
    (\mC_{[-1,x]}u)(t)\,\md t.
\end{equation}
We then approximate the transformed kernel by a bivariate Chebyshev approximant
\begin{align}\label{eq:kernel_duffy_expansion_st}
K\bigl(x,\psi(x,t)\bigr)
    \approx
    \sum_{i=0}^n\sum_{j=0}^m
    C_{ij}T_i(x)T_j(t).
\end{align}
If $K(x, y)$ is smooth on $T^2$, the composite function $K\bigl(x,\psi(x,t)\bigr)$ is smooth on $[-1,1]^2$, since $\psi(x,t)$ is merely polynomial and introduces no singularity. Thus, we sample $K\bigl(x,\psi(x,t)\bigr)$ on a prescribed tensor-product Chebyshev grid and compute the coefficients $C_{ij}$ by a two-dimensional DCT. The truncation degrees $n$ and $m$ are chosen so that the bivariate Chebyshev approximant is sufficiently accurate. 

When $K\bigl(x,\psi(x,t)\bigr)$ is numerically low rank, namely, when a significant portion of the singular values of $\{C_{ij}\}$ are negligibly small, the adaptive cross approximation (ACA) usually allows for a faster construction of \cref{eq:kernel_duffy_expansion_st}. However, $K\bigl(x,\psi(x,t)\bigr)$ is often not sufficiently low-rank for ACA to be computationally advantageous. 

By substituting \cref{eq:kernel_duffy_expansion_st} into \cref{eq:duffy_volterra}, we obtain the $\ell$th column of $\left(\mV \mathbf{T}\right)(x)$
\begin{align}\label{eq:volterra_cheb_approximation}
(\mV T_\ell)(x)
    &=
    \frac{1+x}{2}
    \underbrace{
        \sum_{i=0}^n\sum_{j=0}^m
        C_{ij}T_i(x)
        \int_{-1}^1
        T_j(t)(\mC_{[-1,x]}T_\ell)(t)\,\md t
    }_{\displaystyle =\sum_{r=0}^{n+\ell}A_{r\ell}T_r(x)},
\end{align}
where $A_{r\ell}$ are the Chebyshev coefficients of the double summation. The following theorem gives a closed-form expression for $A_{r\ell}$, from which $\mG$ immediately follows.

\begin{theorem}\label{thm:volterra_matrix}
The spectral approximation to the smooth linear Volterra integral operator \cref{eq:smooth_volterra} is
\begin{align}\label{eq:volterra_matrix_chebyshev}
\mG = \mM_0\left[\frac{1+x}{2}\right]\mA,
\end{align}
where the entries of $\mA$ are
\begin{subnumcases}{
    A_{r\ell}=
    }
    \displaystyle
    2\sum_{i=0}^{n}\delta_{ri}
    \sum_{\substack{0\leqslant j\leqslant m\\ j\ \mathrm{even}}}
    \frac{C_{ij}}{1-j^2},
    &
    \(\ell=0\),
    \label{eq:A_rell_chebyshev_ell0}
    \\
    \displaystyle
    \frac{2-\delta_{r0}}{2}
    \sum_{i=0}^n\sum_{j=0}^m C_{ij}
    \left(
        \frac{\ell}{\ell+|i-r|}H_{j,|i-r|,\ell}
        +
        \frac{\ell}{\ell+i+r}H_{j,i+r,\ell}
    \right),
    &
    \(\ell\geqslant1\),
    \label{eq:A_rell_chebyshev_ellpos}
\end{subnumcases}
where
\begin{align}\label{eq:Hjkell}
        H_{j,k,\ell}
        =
        \int_{-1}^1
        \left(\frac{1+t}{2}\right)^k
        T_j(t)P_{\ell-k}^{(-1,\,2k)}(t)\,\md t.
\end{align}
For $k>\ell$, $H_{j,k,\ell} = 0$.
\end{theorem}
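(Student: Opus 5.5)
The plan is to start from \cref{eq:volterra_cheb_approximation}. Since multiplication by $(1+x)/2$ acts on Chebyshev coefficients as $\mM_0[(1+x)/2]$, \cref{eq:volterra_matrix_chebyshev} is immediate once we identify $A_{r\ell}$. So everything reduces to computing the Chebyshev coefficients of
\[
\sum_{i,j} C_{ij}\,T_i(x)\int_{-1}^1 T_j(t)\,(\mC_{[-1,x]}T_\ell)(t)\,\md t .
\]
I will treat the two cases $\ell=0$ and $\ell\geqslant1$ separately.

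\textbf{Case $\ell=0$.} Here $\mC_{[-1,x]}T_0=1$, so the inner integral is $\int_{-1}^1 T_j(t)\,\md t$. This equals $2/(1-j^2)$ for even $j$ and vanishes for odd $j$. The bracketed sum is therefore $\sum_i T_i(x)\cdot 2\sum_{j\ \mathrm{even}} C_{ij}/(1-j^2)$, and reading off the coefficient of $T_r$ gives \cref{eq:A_rell_chebyshev_ell0}.

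\textbf{Case $\ell\geqslant 1$.} I will expand the restricted polynomial in $t$ using \cref{thm:restriction_analytical_expression} with the roles of the variables swapped. By definition,
\[
(\mC_{[-1,x]}T_\ell)(t)=\sum_{k=0}^{\ell}R_{k\ell}(-1,x)\,T_k(t).
\]
However, I want an expansion in $T_k(x)$ with $t$-dependent coefficients, since $x$ must carry the Chebyshev structure. The symmetry $\psi(x,t)=\psi(t,x)$ provides this:
\[
T_\ell(\psi(x,t))=(\mC_{[-1,t]}T_\ell)(x)=\sum_{k=0}^{\ell}R_{k\ell}(-1,t)\,T_k(x).
\]
Substituting the closed form \cref{eq:restriction_analytical_expression_neg1_t} gives
\[
\int_{-1}^1 T_j(t)\,T_\ell(\psi(x,t))\,\md t=\sum_{k=0}^{\ell}(2-\delta_{k0})\,\frac{\ell}{\ell+k}\,H_{j,k,\ell}\,T_k(x),
\]
with $H_{j,k,\ell}$ as in \cref{eq:Hjkell}. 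Next I multiply by $T_i(x)$ and apply $T_iT_k=\tfrac12\bigl(T_{i+k}+T_{|i-k|}\bigr)$. For fixed $r$, I collect the terms with $r=i+k$ or $r=|i-k|$, i.e. $k=r-i$, $k=i-r$ or $k=i+r$.

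\textbf{Main obstacle: bookkeeping in the reindexing.} This step is where I expect the difficulty to lie. The $k$ solving $|i-k|=r$ are $k=i+r$ and $k=i-r$ (the latter when $i\geqslant r$), while $i+k=r$ gives $k=r-i$ (when $i\leqslant r$). Together these cover $k=|i-r|$ and $k=i+r$, but three points need care.

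\begin{itemize}
\item When $r=0$, the solutions $k=i-r$ and $k=i+r$ coincide. This double counting combines with the factor $(2-\delta_{k0})/2$ to produce the overall prefactor $(2-\delta_{r0})/2$.
\item When $k=0$ (i.e. $i=r$), the factor $2-\delta_{k0}=1$ compensates, because both $|i-r|$ and $r-i$ give the same $k=0$ term through the two halves of the product formula.
\item The condition $k\leqslant\ell$ is absorbed by the convention $H_{j,k,\ell}=0$ for $k>\ell$. This holds because $P_{\ell-k}^{(-1,2k)}$ has negative degree and is therefore zero.
\end{itemize}

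I would verify the prefactors by checking the cases $r=0$, $i=r$, and generic $r\neq0,\ i\neq r$ separately. This yields \cref{eq:A_rell_chebyshev_ellpos}. Finally, the degree bound $r\leqslant n+\ell$ follows because $k\leqslant\ell$ and $i\leqslant n$.
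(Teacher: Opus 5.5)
Your proposal is correct and uses the same ingredients as the paper: the symmetry $\psi(x,t)=\psi(t,x)$, the closed form of $\mR(-1,t)$ from \cref{thm:restriction_analytical_expression}, and the linearization \cref{eq:chebyshev_product}. The difference is the order of operations.

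You synthesize. You expand $T_\ell(\psi(x,t))=\sum_{k\le\ell}R_{k\ell}(-1,t)T_k(x)$, multiply by $T_i(x)$, linearize $T_iT_k$, and collect the coefficient of $T_r$. This forces the case split $r=0$, $i=r$, $i<r$, $i>r$, and a reconciliation of the factors $2-\delta_{k0}$ and $2-\delta_{r0}$.

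The paper projects first. By \cref{eq:chebyshev_orthogonality}, $A_{r\ell}$ is $\frac{2-\delta_{r0}}{\pi}$ times the Chebyshev-weighted inner product with $T_r$. Linearizing $T_iT_r$ (rather than $T_iT_k$) then reduces everything to the integral \cref{eq:restriction_analytical_expression_chebyshev} with $k=|i-r|$ and $k=i+r$. That integral carries no $2-\delta_{k0}$ factor and holds uniformly down to $k=0$, so the prefactor $\frac{2-\delta_{r0}}{2}$ appears with no case analysis.

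I checked your four cases with $F_k:=\frac{\ell}{\ell+k}H_{j,k,\ell}$, and all of them reproduce \cref{eq:A_rell_chebyshev_ellpos}. One point needs sharper wording: for $r=0$, $i\geqslant1$, the expansion contains $k=i$ only once, from the $T_{|i-k|}$ half, giving $\tfrac12(2-\delta_{i0})F_i=F_i$. The doubling is on the side of the target formula, where $|i-r|=i+r=i$, and it is cancelled by the $\tfrac12$ in $\frac{2-\delta_{r0}}{2}$.

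For $H_{j,k,\ell}=0$ when $k>\ell$, your appeal to the paper's negative-degree convention is legitimate. The paper's orthogonality argument shows a little more: since $T_\ell(\psi(x,t))$ has degree $\ell$ in $x$, this convention is exactly what the projection produces.
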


\begin{proof}
The expression \cref{eq:A_rell_chebyshev_ell0} follows from \cref{eq:chebyshev_integral}.
For $\ell\geqslant1$,
\begin{align*}
A_{r\ell}
    &=\frac{2-\delta_{r0}}{\pi}
    \sum_{i=0}^n\sum_{j=0}^m C_{ij}
    \int_{-1}^1 T_j(t)
    \left(
        \int_{-1}^1
        T_i(x)T_r(x)
        T_\ell\bigl(\psi(x,t)\bigr)
        \frac{\md x}{\sqrt{1-x^2}}
    \right)\md t,
\end{align*}
which is obtained by \cref{eq:chebyshev_orthogonality}. By \cref{eq:chebyshev_product}, the symmetry $\psi(x,t)=\psi(t,x)$, and \cref{eq:restriction_analytical_expression_chebyshev}, we also have
\begin{align*}
&\int_{-1}^1
    T_i(x)T_r(x)
    T_\ell\bigl(\psi(x,t)\bigr)
    \frac{\md x}{\sqrt{1-x^2}}
    \notag\\
=&
    \frac{\pi}{2}
    \left(
        \frac{\ell}{\ell+|i-r|}
        \left(\frac{1+t}{2}\right)^{|i-r|}
        P_{\ell-|i-r|}^{(-1,\,2|i-r|)}(t)
        +
        \frac{\ell}{\ell+i+r}
        \left(\frac{1+t}{2}\right)^{i+r}
        P_{\ell-i-r}^{(-1,\,2i+2r)}(t)
    \right).
\end{align*}
The last two equations lead to \cref{eq:A_rell_chebyshev_ellpos}.

It follows from the definition \cref{eq:Hjkell,eq:restriction_analytical_expression_chebyshev} and the symmetry $\psi(x,t)=\psi(t,x)$ that
\begin{align*}
H_{j,k,\ell} = \frac{k+\ell}{\pi\ell}\int_{-1}^1 T_j(t)\left( \int_{-1}^1 T_k(x)T_{\ell}\left( \frac{1+t}{2}x+\frac{t-1}{2} \right)\frac{\md x}{\sqrt{1-x^2}} \right) \md t.
\end{align*}
When $k > \ell$, the inner integral vanishes due to orthogonality. Thus, $H_{j,k,\ell} = 0$ for $k>\ell$.
\end{proof}

An immediate implication of $H_{j,k,\ell}=0$ for $k>\ell$ is that $\mA$ is lower-banded.

\subsection{Sparse structure}
Since $\mA$ is lower banded and $\mM_0\left[(1+x)/2\right]$ is tridiagonal, $\mG$ is in principle lower-banded. As we show below, $\mG$ is in fact almost-banded. This is due to the cancellations that occur in forming the product $\mM_0\left[(1+x)/2\right] \mA$. 

First, it is well known that the spectral approximation to the antiderivative operator is almost-banded \cite[\S2.4.4]{mas}. That is,
\begin{align}
\int_{-1}^{x}\mathbf{T}(y)\,\md y = \mathbf{T}(x)\mQ,  \label{eq:int_cheb}
\end{align}
where the infinite-dimensional matrix $\mQ$ can be written as $\mQ=\mB+\be\bv^{T}$. Here, $\mB$ is tridiagonal and $\be$ is the first column of the infinite identity matrix. Thus, $\be\bv^{T}$ is zero except its first row. 

Suppose that the original kernel $K(x,y)$ is a bivariate polynomial of degrees $n'$ and $m'$ in $x$ and $y$, respectively, i.e.
\begin{align}\label{eq:bivariate_monomial_series}
K(x,y) = \sum_{i=0}^{n'}\sum_{j=0}^{m'} \hat{K}_{ij}x^i y^j,
\end{align}
where $\hat{K}_{ij}$ are the monomial coefficients. In addition to \cref{eq:volterra_matrix_chebyshev}, the bivariate monomial series admits an alternative representation of $\mG$
\begin{align}\label{eq:monomial_volterra_structure}
    \mG
    =
    \sum_{i=0}^{n'}\sum_{j=0}^{m'}
    \hat{K}_{ij}\mM_0[x]^i\mQ\mM_0[x]^j
    =
    \sum_{i=0}^{n'}\sum_{j=0}^{m'}
    \hat{K}_{ij}
    \left(
        \mM_0[x]^i\mB\mM_0[x]^j
        +
        \mM_0[x]^i\be\bv^{T}\mM_0[x]^j
    \right).
\end{align}
Since $\mM_0[x]$ has bandwidths $(1,1)$, the summand $\mM_0[x]^i\mB\mM_0[x]^j$ has bandwidths $(i+j+1,i+j+1)$. On the other hand, $\mM_0[x]^i\be$ is the column vector of the Chebyshev coefficients of $x^i$ whose elements are all zero except the first $i+1$ ones. Hence, $\mG$ is an almost-banded matrix with bandwidths at most $(n'+m'+1,n'+m'+1)$ and the top $n'+1$ rows being dense.

The task therefore reduces to applying the inverse Duffy transform to $K\bigl(x,\psi(x,t)\bigr)$ in \cref{eq:kernel_duffy_expansion_st} and finding the degree of the resulting approximate kernel in the primitive variables $x$ and $y$. This approach is, however, unwieldy, if possible at all, as rational functions are inevitably involved. Hence, we choose to work the other way around. That is, we apply the Duffy transform to \cref{eq:bivariate_monomial_series} to obtain an expression for $K\bigl(x,\psi(x,t)\bigr)$ that is alternative to \cref{eq:kernel_duffy_expansion_st}:
\begin{align}\label{eq:two_expressions_kernel}
    K\bigl(x,\psi(x,t)\bigr)
    \approx
    \sum_{i=0}^{n'}\sum_{j=0}^{m'}
    \hat{K}_{ij}x^i
    \left(
        \frac{x+1}{2}t+\frac{x-1}{2}
    \right)^j = \sum_{i=0}^n\sum_{j=0}^m C_{ij}T_i(x)T_j(t)
\end{align}
It follows immediately that $n' \leqslant n$ and therefore that the number of the top dense rows is at most $n+1$. Moreover, since each nonzero term $\hat{K}_{ij}x^iy^j$ contributes a term of degree $i+j$ in $x$ after the Duffy transform, we have $i+j\leqslant n$ whenever $\hat{K}_{ij}\neq0$. Hence, the bandwidths of $\mG$ are at most $(n+1,n+1)$, provided that the Duffy compatibility condition\footnote{A bivariate polynomial $F(x,t)$ on $[-1,1]^2$ is called \emph{Duffy-compatible}, if it can be written as $F(x,t)=\widetilde{F}\bigl(x,\psi(x,t)\bigr)$ for another bivariate polynomial $\widetilde{F}(x,y)$ on $T^2$, or, equivalently, if the kernel $F(x,t)$ can be written as $\sum_j a_j(x)(t+1)^j$ where $a_j(x)$ is divisible by $(x+1)^j$. Hence, this condition is not necessarily satisfied by a random set of coefficients $\{C_{ij}\}_{i, j=0}^{n, m}$ of the bivariate Chebyshev approximant in \cref{eq:kernel_duffy_expansion_st}.} is satisfied.

The facts that $\mG$ is almost-banded and that $\mM_0[(1+x)/2]$ is tridiagonal suggest that we do not have to evaluate all the entries in $\mA$ but the ones that belong to
\begin{align}\label{eq:cheb_volterra_computational_region}
\Omega = \{(r,\ell): 0\leqslant r\leqslant n+1, \forall \ell \, \lor \, r > n+1,\ r-n \leqslant \ell \leqslant n+r+2\}.
\end{align}
Thus, for any $(r,\ell)\in\Omega$ we only need to evaluate $H_{j,k,\ell}$ with $0\leqslant j \leqslant m$ and $\max\{0, r-n\} \leqslant k \leqslant \min\{\ell,r+n\}$. This set of indices takes into account all possible values of $i+r$ and $|i-r|$ in \cref{eq:A_rell_chebyshev_ellpos} and the fact that $H_{j,k,\ell}=0$ for $k>\ell$. \Cref{fig:H-structure-a} demonstrates the moments to be evaluated by presenting $\{H_{j,k,\ell}\}_{j,k,\ell}$ as an order-3 tensor.

\begin{figure}[t!]
\centering
\renewcommand{\thesubfigure}{\Alph{subfigure}}
\subfloat[\label{fig:H-structure-a}]{\HRequiredMomentsPanel}%
\hspace{\HPanelGapWidth}%
\subfloat[\label{fig:H-structure-b}]{\HRecurrenceClosedPanel}
\caption{Structure of the moments $H_{j,k,\ell}$. Panel~(A): the moments, as shaded cells, used in the construction of $\mA$. Panel~(B): the shaded cells coincide with those in (A), marking the moments used by the construction of $\mA$, while the unshaded cells are the auxiliary moments that are merely intermediate byproducts of the recurrence. The four steps of recurrence are also marked with arrows indicating the recurrence directions. \label{fig:H-structure-and-recurrence}}
\end{figure}

\subsection{Fast construction by recurrence}
Direct evaluation of $\{H_{j,k,\ell}\}_{j,k,\ell}$ by quadrature is expensive. It turns out that they can be obtained by exploiting various recurrence relations. Our discussion begins with the computation of those $H_{j,k,\ell}$ contributing to the top dense rows of $\mG$, followed by those associated with the banded part.

\subsubsection{Top dense fibers}
The following theorem shows that $H_{j,k,\ell}$ for $k=0$, i.e., the top slice of the tensor depicted in \Cref{fig:H-structure-a}, can be obtained by a four- or five-term recurrence.
\begin{theorem}\label{thm:H_j0ell_recurrence}
\begin{align}\label{eq:H_j0ell_initial_data}
        H_{0,0,\ell}
        &=
        \begin{cases}
            2,  & \ell=0,\\
            -1, & \ell=1,\\
            0,  & \ell\geqslant2,
        \end{cases}
        &
        H_{j,0,0}
        &=
        \begin{cases}
            0, & j \text{ is odd},\\
            \dfrac{2}{1-j^2}, & j \text{ is even}.
        \end{cases}
\end{align}
For $\ell\geqslant1$,
\begin{align}\label{eq:H_100_recurrence}
        H_{1,0,\ell}
        =
        \frac{\ell+1}{2\ell+1}H_{0,0,\ell+1}
        -
        \frac{1}{(2\ell-1)(2\ell+1)}H_{0,0,\ell}
        +
        \frac{\ell-1}{2\ell-1}H_{0,0,\ell-1},
\end{align}
and for $j\geqslant1$ and $\ell\geqslant1$,
\begin{align}\label{eq:H_topbegin_recurrence}
        H_{j+1,0,\ell}
        =
        \frac{2(\ell+1)}{2\ell+1}H_{j,0,\ell+1}
        -
        \frac{2}{(2\ell-1)(2\ell+1)}H_{j,0,\ell}
        +
        \frac{2(\ell-1)}{2\ell-1}H_{j,0,\ell-1}
        -
        H_{j-1,0,\ell}.
\end{align}
\end{theorem}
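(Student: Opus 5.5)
The plan is to reduce everything to Legendre polynomials, using the generalized Jacobi polynomials with first parameter $-1$ and second parameter $0$. Setting $k=0$ in \cref{eq:Hjkell} gives
\begin{align*}
H_{j,0,\ell}=\int_{-1}^1 T_j(t)\,P_\ell^{(-1,0)}(t)\,\md t .
\end{align*}

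The key identity I will establish first is
\begin{align*}
P_0^{(-1,0)}=1,\qquad P_\ell^{(-1,0)}(t)=\tfrac12\bigl(P_\ell(t)-P_{\ell-1}(t)\bigr),\quad \ell\geqslant1,
\end{align*}
where $P_\ell$ are the Legendre polynomials. I would derive it from the defining relations \cref{eq:jacobi_minus_one_zero,eq:jacobi_minus_one_left}. It can be cross-checked against \cref{thm:restriction_analytical_expression} with $i=0$, which gives $R_{0,1}(-1,t)=(t-1)/2=(P_1-P_0)/2$.

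The initial data \cref{eq:H_j0ell_initial_data} then follow quickly. For $j=0$, orthogonality of $P_\ell$ against $P_0=1$ gives $\int P_\ell^{(-1,0)}\,\md t=2$ for $\ell=0$, $-1$ for $\ell=1$, and $0$ for $\ell\geqslant2$. For $\ell=0$, $H_{j,0,0}=\int_{-1}^1 T_j\,\md t$, which is the standard Chebyshev integral \cref{eq:chebyshev_integral}.

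For the recurrences I will use $T_1=tT_0$ and $T_{j+1}=2tT_j-T_{j-1}$, so that
\begin{align*}
H_{j+1,0,\ell}=2\int_{-1}^1 T_j(t)\,\bigl(tP_\ell^{(-1,0)}(t)\bigr)\,\md t-H_{j-1,0,\ell}.
\end{align*}
The case $j=0$ is the same computation without the factor $2$ and without the subtracted term, which yields \cref{eq:H_100_recurrence}.

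It therefore suffices to prove the three-term relation
\begin{align*}
tP_\ell^{(-1,0)}=\frac{\ell+1}{2\ell+1}P_{\ell+1}^{(-1,0)}-\frac{1}{(2\ell-1)(2\ell+1)}P_\ell^{(-1,0)}+\frac{\ell-1}{2\ell-1}P_{\ell-1}^{(-1,0)},\qquad \ell\geqslant1.
\end{align*}
Integrating this against $T_j$ then gives both stated recurrences at once. I will verify it by substituting the Legendre representation on both sides and applying $tP_n=\bigl((n+1)P_{n+1}+nP_{n-1}\bigr)/(2n+1)$ to $t(P_\ell-P_{\ell-1})$. After that I match the coefficients of $P_{\ell+1},P_\ell,P_{\ell-1},P_{\ell-2}$, which is a routine rational-function check. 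Alternatively, one could specialize the general Jacobi three-term recurrence to $\alpha=-1$, $\beta=0$, but that formula has a vanishing denominator at small degrees, so the Legendre route is safer.

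The main obstacle is the boundary case $\ell=1$. There $P_{\ell-1}^{(-1,0)}=P_0^{(-1,0)}=1$ does not equal $\tfrac12(P_0-P_{-1})=\tfrac12$, so the generic Legendre computation does not directly apply. I will handle it separately. The coefficient $(\ell-1)/(2\ell-1)$ vanishes at $\ell=1$, so I only need to check directly that
\begin{align*}
t\,\frac{t-1}{2}=\frac23P_2^{(-1,0)}-\frac13P_1^{(-1,0)},
\end{align*}
with $P_2^{(-1,0)}=(P_2-P_1)/2$. This is immediate. With that case settled, the recurrences hold for all $\ell\geqslant1$.
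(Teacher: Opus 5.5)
Your proof is correct and has the same skeleton as the paper's. You read off the initial data from the $k=0$ definition, then combine $T_1=tT_0$ and $T_{j+1}=2tT_j-T_{j-1}$ with a three-term relation for $tP_\ell^{(-1,0)}$.

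The only real difference is how that relation is obtained. The paper simply specializes \cref{eq:jacobi_multiplication} to $(\alpha,\beta)=(-1,0)$. This yields exactly $a_\ell=(\ell+1)/(2\ell+1)$, $b_\ell=-1/(4\ell^2-1)$, $c_\ell=(\ell-1)/(2\ell-1)$. The denominators $2\ell$ and $2\ell\pm1$ degenerate only at $\ell=0$, which the theorem excludes, so that route is not actually unsafe in the stated range. Similarly, the paper gets $H_{0,0,\ell}$ from $P_\ell^{(-1,0)}=\tfrac12(t-1)P_{\ell-1}^{(1,0)}$ and the orthogonality of $P_{\ell-1}^{(1,0)}$ against constants with weight $1-t$, rather than from Legendre orthogonality.

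What your Legendre reduction buys is a self-contained check that \cref{eq:jacobi_multiplication} really holds at the generalized parameter $\alpha=-1$ for $\ell\geqslant1$. The paper simply asserts this in its appendix lemma. Your coefficient matching does go through.

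One step is asserted rather than derived. \cref{eq:jacobi_minus_one_zero,eq:jacobi_minus_one_left} alone only give $P_\ell^{(-1,0)}=\tfrac12(t-1)P_{\ell-1}^{(1,0)}$, so you also need $(t-1)P_{\ell-1}^{(1,0)}=P_\ell-P_{\ell-1}$. This is quick to supply:
\begin{itemize}
\item $P_\ell-P_{\ell-1}$ vanishes at $t=1$.
\item Its quotient by $t-1$ has degree $\ell-1$ and is orthogonal to all lower-degree polynomials with respect to the weight $1-t$, so it is a multiple of $P_{\ell-1}^{(1,0)}$.
\item Comparing values at $t=-1$, where both the quotient and $P_{\ell-1}^{(1,0)}$ equal $(-1)^{\ell-1}$, fixes the constant.
\end{itemize}

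Finally, the case $\ell=1$ is not really an obstacle. The offending term carries the coefficient $(\ell-1)/(2\ell-1)=0$, so the generic computation with $P_{-1}=0$ already covers it, though your direct check is of course fine.
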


\begin{proof}
Since $H_{0,0,\ell}=\int_{-1}^{1}P_\ell^{(-1,0)}(t)\,\md t$ by definition, $H_{0,0,0}=2$ follows from \cref{eq:jacobi_minus_one_zero}. The relation \cref{eq:jacobi_minus_one_left} with $\beta=0$ and the orthogonality of Jacobi polynomials lead to $H_{0,0,1}=-1$ and $H_{0,0,\ell}=0$ for $\ell\geqslant2$.

Since $P_0^{(-1,0)}(t)=1$, $H_{j,0,0}$ reduces to $\int_{-1}^{1}T_j(t)\,\md t$. The values of $H_{j,0,0}$ given by \cref{eq:H_j0ell_initial_data} follow from \cref{eq:chebyshev_integral}.

For $\ell\geqslant1$, combining \cref{eq:jacobi_multiplication} with the parameters set to $(-1,0)$ and the definition of $H_{1,0,\ell}$ gives \cref{eq:H_100_recurrence}. When $j\geqslant 1$, substituting \cref{eq:chebyshev_three_term} into the definition of $H_{j,0,\ell}$ and using \cref{eq:jacobi_multiplication} again yield \cref{eq:H_topbegin_recurrence}.
\end{proof}

Thus, the slice of $k=0$ is obtained as follows. We first evaluate the entries in the fibers of $j = 0$ and $\ell = 0$ using \cref{eq:H_j0ell_initial_data}. The fibers corresponding to $j = 1$ and subsequent values of $j$ are computed via recurrence relations \cref{eq:H_100_recurrence,eq:H_topbegin_recurrence}, respectively. Suppose the largest index $\ell$ we need to calculate $H_{j,0,\ell}$ for is $l$. To compute $H_{m,0,l}$ recursively, we have to have $\{H_{j,0,\ell}\}_{\ell = l+1}^{l+m-j}$ of the $j$th fiber ($j = 0, 1, \cdots, m-1$) also evaluated, though they are not required in forming $\mA$. This is due to the indices of dependence in \cref{eq:H_100_recurrence,eq:H_topbegin_recurrence}. In \Cref{fig:H-structure-b}, these auxiliary entries are indicated by the unshaded cells in the top slice.

The next theorem shows that once the initial slice is available, the remaining entries in the top dense fibers can be obtained slicewise via recurrence as $k$ increases. 

\begin{theorem}\label{thm:H_kell_recurrence}
For $k\geqslant0$ and $\ell\geqslant k+1$, 
\begin{align}\label{eq:H_top_recurrence}
        H_{j,k+1,\ell}
        =
        \frac{
        (\ell-k)H_{j,k,\ell}
        +(\ell+k)H_{j,k,\ell-1}
        -(\ell-k-2)H_{j,k+1,\ell-1}
        }{
        \ell+k
        }.
\end{align}
\end{theorem}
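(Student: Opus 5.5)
The plan is to show that \cref{eq:H_top_recurrence} does not depend on $j$ at all. It should come from a pointwise identity among the weighted generalized Jacobi polynomials appearing in \cref{eq:Hjkell}. Write $w=(1+t)/2$ and $Y_{k,\ell}(t)=w^kP_{\ell-k}^{(-1,2k)}(t)$, so that $H_{j,k,\ell}=\int_{-1}^1T_j(t)Y_{k,\ell}(t)\,\md t$. It then suffices to prove, for $k\geqslant0$ and $\ell\geqslant k+1$,
\begin{align*}
(\ell+k)\,Y_{k+1,\ell}(t)=(\ell-k)\,Y_{k,\ell}(t)+(\ell+k)\,Y_{k,\ell-1}(t)-(\ell-k-2)\,Y_{k+1,\ell-1}(t),\qquad t\in[-1,1].
\end{align*}
Multiplying by $T_j(t)$ and integrating over $[-1,1]$ then gives \cref{eq:H_top_recurrence} for every $j$. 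Here we use the convention that polynomials of negative degree vanish, which covers $Y_{k+1,\ell-1}$ when $\ell=k+1$.

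To prove the pointwise identity, I would expand everything in powers of $w$ via the hypergeometric representation of the Jacobi polynomial \cite[\S IV]{sze},
\begin{align*}
P_n^{(\alpha,\beta)}(t)=(-1)^n\frac{(\beta+1)_n}{n!}\,{}_2F_1\!\left(-n,\,n+\alpha+\beta+1;\,\beta+1;\,w\right).
\end{align*}
This is the same representation already used in the proof of \cref{thm:restriction_analytical_expression}, and it remains valid for the generalized case $\alpha=-1$ consistently with \cref{eq:jacobi_minus_one_zero,eq:jacobi_minus_one_left}. With $\alpha=-1$, $\beta=2k$ and $n=\ell-k$, the second numerator parameter is $n+\alpha+\beta+1=\ell+k$. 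Hence
\begin{align*}
Y_{k,\ell}(t)=(-1)^{\ell-k}\frac{(2k+1)_{\ell-k}}{(\ell-k)!}\sum_{s=0}^{\ell-k}\frac{(k-\ell)_s(\ell+k)_s}{(2k+1)_s\,s!}\,w^{k+s}.
\end{align*}
Every term in the identity is then a polynomial in $w$ supported on the powers $w^{k},\dots,w^{\ell}$. The power $w^k$ appears only in $Y_{k,\ell}$ and $Y_{k,\ell-1}$, since the two $Y_{k+1,\cdot}$ terms start at $w^{k+1}$.

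The proof then reduces to comparing the coefficient of $w^{k+p}$ on both sides for each $p=0,\dots,\ell-k$. I would first simplify each coefficient to a closed form. The cleanest form appears to be a multiple of
\begin{align*}
\frac{(\ell+k+p-1)!}{(\ell-k-p)!\,(2k+p)!\,p!},
\end{align*}
where $(2k+1)_{\ell-k}/(2k+1)_s$ and $(k-\ell)_s/(\ell-k)!$ collapse to ratios of factorials. Next I would factor out a common factorial expression and check that the remaining rational factors satisfy a linear identity in $\ell$, $k$ and $p$. The factors $(\ell-k)$, $(\ell+k)$ and $(\ell-k-2)$ in \cref{eq:H_top_recurrence} should be exactly what makes this balance work. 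I would treat the boundary powers $p=0$ and $p=\ell-k$ separately, because some terms are absent there, as is the case $\ell=k+1$.

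I expect the main obstacle to be bookkeeping in this coefficient comparison: signs $(-1)^{\ell-k}$ versus $(-1)^{\ell-k-1}$, index shifts between $Y_{k,\cdot}$ and $Y_{k+1,\cdot}$, and the degenerate edge cases. If the direct Pochhammer computation becomes messy, I would fall back on standard Jacobi contiguous relations. These are the relation lowering $n$ by one while raising $\beta$ by two, combined with $wP^{(\alpha,\beta+1)}$-type identities and \cref{eq:jacobi_minus_one_left}, which reduces everything to $P^{(1,\cdot)}$. A quick numerical check at $k=0$, $\ell=1,2$ against \cref{eq:H_j0ell_initial_data} would also confirm the signs before I commit to the general computation.
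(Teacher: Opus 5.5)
Your proposal is correct, and it reduces the theorem to the same pointwise identity the paper uses (yours is the paper's identity multiplied by $w^k$), followed by multiplication by $T_j$ and integration. The difference is in how you prove that identity.

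The paper gets it in a few lines by passing through the intermediate family $P^{(-1,2k+1)}$. It expands $\frac{1+t}{2}P_{\ell-k-1}^{(-1,2k+2)}$ and $\frac{1+t}{2}P_{\ell-k-2}^{(-1,2k+2)}$ using \cref{eq:jacobi_parameter_shift_left} with $\beta=2k+1$. It expands $P_{\ell-k}^{(-1,2k)}$ and $P_{\ell-k-1}^{(-1,2k)}$ using \cref{eq:jacobi_parameter_shift_right} with $\beta=2k$, and then matches terms. The case $\ell=k+1$ is handled separately by a degree-one check.

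Your route avoids those contiguous relations, which the paper quotes without proof. Using the $(1+t)/2$ form of the hypergeometric representation at $\alpha=-1$ is legitimate: it is the $\alpha\to-1$ limit and agrees with \cref{eq:jacobi_minus_one_left}. Your predicted closed form is right, and the comparison closes. Put $A=\ell+k$, $B=\ell-k$, and pull out $(-1)^{\ell-k+p}(\ell+k)\,(\ell+k+p-2)!/\bigl((\ell-k-p)!\,(2k+p+1)!\,p!\bigr)$ from the coefficient of $w^{k+p}$. The left side becomes $(A+1)(A+p-1)p$ and the right side $p\bigl[(A+B-1)(A-B+p+1)+(B-2)(B-p)\bigr]$. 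Both equal $p(A^2+Ap+p-1)$. This is a polynomial identity rather than a linear one, but it is trivial to verify.

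With the convention $1/(-1)!=0$, the boundary powers $p=0$ and $p=\ell-k$, and the case $\ell=k+1$ with $k\geqslant1$, are covered automatically. The only case needing a hand check is $(k,\ell)=(0,1)$, where the closed form for $Y_{0,0}=1$ degenerates to $0\cdot(-1)!$. There the identity reads $w=(w-1)+1$.

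The paper's route is shorter and is reused almost verbatim for \cref{thm:weakly_H_recurrence}. Yours costs some Pochhammer bookkeeping, but it turns the recurrence into a mechanically checkable identity and treats nearly all edge cases uniformly.
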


\begin{proof}
We first consider the boundary case $\ell=k+1$. By  \cref{eq:jacobi_minus_one_zero,eq:jacobi_minus_one_left} with $n=1$ and $\beta=2k$, 
it can be shown that
\begin{align*}
\frac{1+t}{2}P_0^{(-1, 2k+1)}(t) = \frac{1}{2k+1} \bigl(P_1^{(-1, 2k)}(t)+(2k+1)P_0^{(-1, 2k)}(t)\bigr).
\end{align*}
Multiplying both sides by $T_j(t)\bigl((1+t)/2\bigr)^k$, integrating over $[-1,1]$, using the fact that $H_{j,k+1,k}=0$ give \cref{eq:H_top_recurrence}. 

For $\ell\geqslant k+2$, we specialize \cref{eq:jacobi_parameter_shift_left} to $\beta=2k+1$ and $n=\ell-k-1,\ell-k-2$, and \cref{eq:jacobi_parameter_shift_right} to $\beta=2k$ and $n=\ell-k,\ell-k-1$. Combining the resulting identities gives
\begin{align*}
    (\ell+k)\frac{1+t}{2}
    P_{\ell-k-1}^{(-1,2k+2)}(t)
    +
    (\ell-k-2)\frac{1+t}{2}
    P_{\ell-k-2}^{(-1,2k+2)}(t)
    =
    (\ell-k)P_{\ell-k}^{(-1,2k)}(t)
    +
    (\ell+k)P_{\ell-k-1}^{(-1,2k)}(t).
\end{align*}
Multiplying this identity by $T_j(t)\bigl((1+t)/2\bigr)^k$, integrating over $[-1,1]$, and using \cref{eq:Hjkell} give \cref{eq:H_top_recurrence}.
\end{proof}

\subsubsection{Banded part}
In principle, \cref{eq:H_top_recurrence} can also be used to recurse for the entries in the banded part of each $(k,\ell)$ slice---since the entries in the main diagonal and first superdiagonal can be calculated explicitly, recurrence can be carried out diagonal-wise using a variation of \cref{eq:H_top_recurrence}
\begin{align}\label{eq:H_top_recurrence_variation}
H_{j,k-1,\ell+1}
    =
    \frac{
    (\ell+k)H_{j,k,\ell+1}
    +(\ell-k)H_{j,k,\ell}
    -(\ell+k)H_{j,k-1,\ell}
    }{\ell-k+2}.
\end{align}
Unfortunately, the recurrence \cref{eq:H_top_recurrence_variation} is numerically unstable due to severe cancellations. To this end, we instead construct a recurrence relation in the $(j,\ell)$ slice in \cref{thm:H_bottom_recurrence}, while the following theorem gives the first two fibers, i.e., $j = 0, 1$, with which the recurrence can be started.
\begin{theorem}\label{thm:H_initial_layers}
$H_{0,k,k}=2/(k+1)$, $H_{0,k,k+1}=-2(2k+1)/\bigl((k+1)(k+2)\bigr)$, and for $\ell\geqslant k+2$,
\begin{subequations}
\begin{align}\label{eq:H_0kl}
H_{0,k,\ell}
        =
        -\frac{\ell+k}{\ell+k-1}
        \frac{\ell-2}{\ell+1}
        H_{0,k,\ell-1}.
\end{align}
$H_{1,k,k}=2k/\bigl((k+1)(k+2)\bigr)$, $H_{1,k,k+1}=2(2k+1)(1-k)/\bigl((k+1)(k+2)(k+3)\bigr)$, and for $\ell\geqslant k+2$,
\begin{align}\label{eq:H_1kl}
        H_{1,k,\ell}
        =
        -\frac{\ell+k}{\ell+k-1}
        \frac{\ell-3}{\ell+2}
        \left(
            H_{1,k,\ell-1}
            +
            H_{0,k,\ell-1}
        \right)
        -
        H_{0,k,\ell}.
\end{align}
\end{subequations}
\end{theorem}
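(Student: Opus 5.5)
I will obtain the $j=0$ and $j=1$ fibers from explicit Jacobi polynomial identities, using the substitution $s=(1+t)/2$. Then
\begin{align*}
H_{0,k,\ell}=2\int_0^1 s^k P_{\ell-k}^{(-1,2k)}(2s-1)\,\md s,
\end{align*}
and, since $T_1(t)=t=2s-1$, we have $H_{1,k,\ell}=2\int_0^1 s^k(2s-1)P_{\ell-k}^{(-1,2k)}(2s-1)\,\md s$, which equals $4\int_0^1 s^{k+1}P\,\md s-H_{0,k,\ell}$. The plan is therefore to compute the one-parameter family of moments $M_{p,k,n}:=\int_0^1 s^{p}P_n^{(-1,2k)}(2s-1)\,\md s$ for $p=k$ and $p=k+1$ in closed form. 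The recurrences \cref{eq:H_0kl,eq:H_1kl} should then follow as ratios of consecutive closed forms.

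For the moments I will use the hypergeometric representation of the generalized Jacobi polynomial as a finite sum in powers of $s$ (or of $1-s$), consistent with \cref{eq:jacobi_minus_one_zero,eq:jacobi_minus_one_left}. Integrating termwise with the beta integral, exactly as in the proof of \cref{thm:restriction_analytical_expression}, gives a terminating ${}_3F_2$ at unit argument. For $p=k$ this ${}_3F_2$ should be Saalschützian, so the Pfaff--Saalschütz formula yields a product of Pochhammer symbols. I expect the result to be proportional to $(-1)^n(2-n)_{\cdot}/\cdots$, which is consistent with the factor $(\ell-2)$ in \cref{eq:H_0kl}.

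A cleaner alternative, which I would try first, is to use \cref{eq:jacobi_minus_one_left} in the form $P_n^{(-1,\beta)}(t)\propto\frac{t-1}{2}P_{n-1}^{(1,\beta)}(t)$ for $n\geqslant1$. This makes the weight $s^k(1-s)$ appear, and then the Rodrigues formula for $P_{n-1}^{(1,2k)}$, integrated by parts against $s^{k}$ (or $s^{k+1}$), gives the moment directly, because the weight $s^{2k}(1-s)$ almost matches. The boundary cases $\ell=k,k+1$, i.e.\ $n=0,1$, will be checked directly: $P_0=1$ gives $H_{0,k,k}=2/(k+1)$ and $H_{1,k,k}=4/(k+2)-2/(k+1)=2k/\bigl((k+1)(k+2)\bigr)$. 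Substituting $P_1^{(-1,2k)}$ gives the stated $\ell=k+1$ values.

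Once closed forms $M_{k,k,n}$ and $M_{k+1,k,n}$ are in hand, \cref{eq:H_0kl} is the ratio $M_{k,k,n}/M_{k,k,n-1}$ with $n=\ell-k$. For \cref{eq:H_1kl} I will write $4M_{k+1,k,n}=H_{1,k,\ell}+H_{0,k,\ell}$. The claimed identity then reads $(H_1+H_0)_{\ell}=-\frac{\ell+k}{\ell+k-1}\frac{\ell-3}{\ell+2}(H_1+H_0)_{\ell-1}$, which again is a ratio of consecutive closed forms of $M_{k+1,k,n}$. This reduction is the key simplification.

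The main obstacle is getting the Saalschütz evaluation (or the integration-by-parts constants) right for $p=k+1$, where the balance condition may be off by one. If it is, I will fall back on expressing $s\,P_n^{(-1,2k)}$ via the three-term Jacobi multiplication \cref{eq:jacobi_multiplication}, or via the parameter-shift identities \cref{eq:jacobi_parameter_shift_left,eq:jacobi_parameter_shift_right} already used in \cref{thm:H_kell_recurrence}, and reduce everything to $p=k$ moments.
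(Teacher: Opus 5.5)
Your outline matches the paper's proof. The paper also substitutes $s=(1+t)/2$, works with $H_{1,k,\ell}+H_{0,k,\ell}=4\int_0^1 s^{k+1}P_{\ell-k}^{(-1,2k)}(2s-1)\,\md s$, derives closed forms for both moments from the hypergeometric representation and beta integrals, and reads off \cref{eq:H_0kl,eq:H_1kl} from the shift $(a)_N=(a+N-1)(a)_{N-1}$. State that last step multiplicatively rather than as a ratio of consecutive closed forms, because the closed forms can vanish (e.g.\ $H_{0,0,\ell}=0$ for $\ell\geqslant2$). However, two of the mechanisms you describe would not work as written.

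First, the ${}_3F_2$ obtained by expanding in powers of $s$ is not Saalschützian for either moment. Via \cref{eq:jacobi_parity}, $\int_0^1 s^pP_n^{(-1,2k)}(2s-1)\,\md s$ is a multiple of ${}_3F_2(-n,n+2k,p+1;2k+1,p+2;1)$. For every $p$, the sum of its lower parameters exceeds that of its upper parameters by $2$ rather than $1$, so the problem is not specific to $p=k+1$. What works is to expand in powers of $1-s$ using \cref{eq:jacobi_minus_one_left}, namely $P_n^{(-1,2k)}(2s-1)=-(n+2k)(1-s)\,{}_2F_1(1-n,n+2k+1;2;1-s)$. The beta integral $B(p+1,m+2)$ then cancels the lower parameter $2$, leaving ${}_2F_1(1-n,n+2k+1;p+3;1)$. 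Chu--Vandermonde sums this, giving for $n\geqslant1$
\begin{align*}
\int_0^1 s^pP_n^{(-1,2k)}(2s-1)\,\md s=\frac{(-1)^n(n+2k)\,(2k-p)_{n-1}}{(p+1)(p+2)\,(p+3)_{n-1}}.
\end{align*}
With $p=k$ and $p=k+1$ these are exactly the two closed forms you need. The factors $(k)_{n-1}$ and $(k-1)_{n-1}$ are what produce the $\ell-2$ and $\ell-3$ in the recurrences.

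Second, in your Rodrigues route the leftover factor after matching the weight $s^{2k}(1-s)$ is $s^{-k}$ (resp.\ $s^{1-k}$), not $s^{k}$. Integrating by parts $n-1$ times against this negative power is legitimate: every boundary term at $s=0$ behaves like $s^{k+1}$ (resp.\ $s^{k+2}$), and those at $s=1$ vanish. The result is $(k)_{n-1}B(k+1,n+1)$ (resp.\ $(k-1)_{n-1}B(k+2,n+1)$) times $(-1)^n(n+2k)/n!$, which is the same closed form. With either correction the proof goes through.
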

\begin{proof}
Straightforward integration gives $H_{0,k,k}$ and $H_{1,k,k}$. 

For $\ell\geqslant k+1$, the change of variable $s=(1+t)/2$ gives
\begin{align*}
    H_{0,k,\ell}
    =
    2\int_0^1
    s^k P_{\ell-k}^{(-1,2k)}(2s-1)\,\md s,
    \quad
    H_{1,k,\ell}+H_{0,k,\ell}
    =
    4\int_0^1
    s^{k+1}P_{\ell-k}^{(-1,2k)}(2s-1)\,\md s.
\end{align*}
We consider $H_{1,k,\ell}+H_{0,k,\ell}$ rather than $H_{1,k,\ell}$ because the former bears the same form as $H_{0,k,\ell}$ except the incremental power of $s$. Identity \cref{eq:jacobi_parity}, the hypergeometric representation of the Jacobi polynomials \cite[\S IV]{sze}, and termwise evaluation using the beta integral yield
\begin{align*}
H_{0,k,\ell}
    =
    \frac{2(-1)^{\ell-k}(\ell+k) (k)_{\ell-k-1}}
    {(k+1)(k+2)(k+3)_{\ell-k-1}},
    \quad
    H_{1,k,\ell}+H_{0,k,\ell}
    =
    \frac{4(-1)^{\ell-k}(\ell+k)(k-1)_{\ell-k-1}}
    {(k+2)(k+3)(k+4)_{\ell-k-1}}.
\end{align*}

When $\ell=k+1$, $H_{0,k,\ell}$ reduces to $H_{0,k,k+1}=-2(2k+1)/\bigl((k+1)(k+2)\bigr)$, while subtracting this value from $H_{1,k,\ell}+H_{0,k,\ell}$ gives $H_{1,k,k+1}=2(2k+1)(1-k)/\bigl((k+1)(k+2)(k+3)\bigr)$.

For $\ell\geqslant k+2$, $(\cdot)_{\ell-k-1}=(\cdot+\ell-k-2)(\cdot)_{\ell-k-2}$ leads to \cref{eq:H_0kl} and the recurrence relation
\begin{align*}
H_{1,k,\ell}+H_{0,k,\ell}
    =
    -\frac{(\ell+k)(\ell-3)}{(\ell+k-1)(\ell+2)}
    \left(
        H_{1,k,\ell-1}
        +
        H_{0,k,\ell-1}
    \right),
\end{align*}
which, upon subtracting $H_{0,k,\ell}$ on both sides, gives \cref{eq:H_1kl}.
\end{proof}

\begin{theorem}\label{thm:H_bottom_recurrence}
For $j\geqslant1$ and $k\geqslant0$, the diagonal entries for incremental $j$ can be recursed by
\begin{subequations}\label{eq:H_band_recurrence}
\begin{align}\label{eq:H_bottom_recurrence_diagonal}
        H_{j+1,k,k}
        =
        \frac{4}{2k+1}
        H_{j,k,k+1}
        +
        2H_{j,k,k}
        -
        H_{j-1,k,k}.
\end{align}
For $\ell\geqslant k+1$, the recurrence is
\begin{align}\label{eq:H_bottom_recurrence}
H_{j+1,k,\ell}
       = 2\frac{\ell+k}{\ell}\left(
        \frac{\ell-k+1}
        {2\ell+1}
        H_{j,k,\ell+1}
                +
        \frac{\ell-k-1}
        {2\ell-1}
        H_{j,k,\ell-1}\right)
        +2
        \frac{4k^2-1}
        {4\ell^2-1}
        H_{j,k,\ell}
        -
        H_{j-1,k,\ell}.
\end{align}
\end{subequations}
\end{theorem}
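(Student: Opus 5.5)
The plan is to combine the Chebyshev three-term recurrence in $j$ with the three-term recurrence of the Jacobi family $P^{(-1,2k)}_n$ in its degree $n=\ell-k$, just as in the proof of \cref{thm:H_j0ell_recurrence} (which is the case $k=0$).

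First, substitute $T_{j+1}(t)=2tT_j(t)-T_{j-1}(t)$ from \cref{eq:chebyshev_three_term} into the definition \cref{eq:Hjkell}. This gives, for $j\geqslant1$,
\begin{align*}
H_{j+1,k,\ell}=2\int_{-1}^1\left(\frac{1+t}{2}\right)^k T_j(t)\,\bigl(tP^{(-1,2k)}_{\ell-k}(t)\bigr)\,\md t-H_{j-1,k,\ell}.
\end{align*}
The whole task is then to expand $tP^{(-1,2k)}_{\ell-k}(t)$ in the same family. The weight $\bigl((1+t)/2\bigr)^k$ and the parameter $2k$ are unchanged, so every term integrates back to an $H_{j,k,\cdot}$.

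Second, I will invoke \cref{eq:jacobi_multiplication} with $(\alpha,\beta)=(-1,2k)$ and $n=\ell-k$. Then $\alpha+\beta=2k-1$ and $2n+\alpha+\beta=2\ell-1$. The standard coefficients
\begin{align*}
\frac{2(n+1)(n+\alpha+\beta+1)}{(2n+\alpha+\beta+1)(2n+\alpha+\beta+2)},\quad
\frac{\beta^2-\alpha^2}{(2n+\alpha+\beta)(2n+\alpha+\beta+2)},\quad
\frac{2(n+\alpha)(n+\beta)}{(2n+\alpha+\beta)(2n+\alpha+\beta+1)}
\end{align*}
become
\begin{align*}
\frac{(\ell-k+1)(\ell+k)}{\ell(2\ell+1)},\quad \frac{4k^2-1}{4\ell^2-1},\quad \frac{(\ell-k-1)(\ell+k)}{\ell(2\ell-1)}.
\end{align*}
Multiplying by $2$, integrating against $\bigl((1+t)/2\bigr)^kT_j(t)$ and using \cref{eq:Hjkell} gives exactly \cref{eq:H_bottom_recurrence} for $\ell\geqslant k+1$.

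The diagonal case $\ell=k$ (that is, $n=0$) is treated separately. There the general coefficients become $0/0$ when $k=0$, since $2n+\alpha+\beta+1=2k$. So I will instead compute $P^{(-1,2k)}_1$ explicitly from \cref{eq:jacobi_minus_one_left} and verify directly the linear identity
\begin{align*}
2t=\frac{4}{2k+1}P^{(-1,2k)}_1(t)+2P^{(-1,2k)}_0(t),
\end{align*}
which, after integration, is \cref{eq:H_bottom_recurrence_diagonal}. The limiting values of the general coefficients are $a=2/(2k+1)$ and $b=1$, which is consistent with this identity.

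The main obstacle is justifying \cref{eq:jacobi_multiplication} for the generalized parameter $\alpha=-1$. Here the polynomials are defined through \cref{eq:jacobi_minus_one_zero,eq:jacobi_minus_one_left} rather than by classical orthogonality. My first approach is a continuity argument: the coefficients of $P^{(\alpha,\beta)}_n$ are polynomial in $\alpha$, and the recurrence coefficients are rational in $\alpha$ and regular at $\alpha=-1$ whenever $\ell\geqslant k+1$, so the identity extends to $\alpha=-1$ by continuity. I also need to check that the degree-$(\ell-k-1)$ term is harmless at $\ell=k+1$, where it is just $P_0$, and that terms with negative degree vanish by convention. If that argument proves awkward, the fallback is to substitute \cref{eq:jacobi_minus_one_left} and reduce the identity to the classical recurrence for $P^{(1,2k)}_{n-1}$ multiplied by the factor $(t-1)/\cdots$.
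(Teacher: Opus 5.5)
Your proposal is correct and follows the paper's proof: you substitute the Chebyshev three-term recurrence into \cref{eq:Hjkell}, expand $tP_{\ell-k}^{(-1,2k)}(t)$ by \cref{eq:jacobi_multiplication} with parameters $(-1,2k)$ for $\ell\geqslant k+1$, and treat $\ell=k$ through the explicit identity $2t=\frac{4}{2k+1}P_1^{(-1,2k)}(t)+2P_0^{(-1,2k)}(t)$, and your coefficient computations all check out. The one difference is that the paper takes \cref{eq:jacobi_multiplication} at $\alpha=-1$ as a given identity from its appendix lemma, whereas you justify it; your continuity argument is valid because \cref{eq:jacobi_minus_one_left} is exactly the $\alpha\to-1$ limit of the classical Jacobi polynomials.
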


\begin{proof}
For $\ell=k$, combining \cref{eq:chebyshev_three_term,eq:Hjkell} and using the fact \cref{eq:jacobi_minus_one_zero} gives \cref{eq:H_bottom_recurrence_diagonal}.

For $\ell\geqslant k+1$, using \cref{eq:chebyshev_three_term} we have
\begin{align}\label{eq:H_bottom_recurrence_intermediate}
H_{j+1,k,\ell}+H_{j-1,k,\ell} = 2\int_{-1}^1 t \left(\frac{1+t}{2}\right)^k T_j(t)P_{\ell-k}^{(-1,2k)}(t)\,\md t.
\end{align}
Specializing \cref{eq:jacobi_multiplication} to degree $\ell-k$ and parameters $(-1,2k)$ and substituting the resulting recurrence relation into \cref{eq:H_bottom_recurrence_intermediate} yields \cref{eq:H_bottom_recurrence}.
\end{proof}

As for the top dense fibers, in any $(j, \ell)$ slice we have to evaluate more entries than we actually need for forming $A_{r\ell}$ due to the indices of dependence in \cref{eq:H_band_recurrence}---we compute $m-j$ more auxiliary entries in the $j$th fiber, as demonstrated by \Cref{fig:H-structure-b}. 

\subsubsection{Assembling $\mA$}
Once all the required entries of $H_{j,k,\ell}$ are available, we assemble $\mA$ and form $\mG$. Computing $A_{r\ell}$ for $\ell \geqslant 1$ by directly applying \cref{eq:A_rell_chebyshev_ellpos} would incur a cost that is much higher than necessary. As suggested by \cref{eq:A_rell_chebyshev_ellpos}, the $\ell$th column of $\mA$ for $\ell\geqslant 1$ can be written as
\begin{align}\label{eq:A_toeplitz_hankel}
    \mA_{:,\ell}
    =
    W\sum_{j=0}^{m}F^{j,\ell}C_{:,j},
\end{align}
where $W = \operatorname{diag}\left(1/2,1,1,\ldots  \right)$,
\begin{align}\label{eq:toeplitz_hankel}
    F^{j,\ell}
    &= 
    \underbrace{
    \begin{pmatrix}
        F_{j,0,\ell}
        & F_{j,1,\ell}
        & F_{j,2,\ell}
        & \cdots
        & F_{j,n,\ell}
        \\
        F_{j,1,\ell}
        & F_{j,0,\ell}
        & F_{j,1,\ell}
        & \ddots
        & F_{j,n-1,\ell}
        \\
        F_{j,2,\ell}
        & F_{j,1,\ell}
        & F_{j,0,\ell}
        & \ddots
        & F_{j,n-2,\ell}
        \\
        \vdots
        & \ddots
        & \ddots
        & \ddots
        & \vdots
    \end{pmatrix}}_{\text{Toeplitz}}
    +
    \underbrace{
    \begin{pmatrix}
        F_{j,0,\ell}
        & F_{j,1,\ell}
        & F_{j,2,\ell}
        & \cdots
        & F_{j,n,\ell}
        \\
        F_{j,1,\ell}
        & F_{j,2,\ell}
        & F_{j,3,\ell}
        & \iddots
        & F_{j,n+1,\ell}
        \\
        F_{j,2,\ell}
        & F_{j,3,\ell}
        & F_{j,4,\ell}
        & \iddots
        & F_{j,n+2,\ell}
        \\
        \vdots
        & \iddots
        & \iddots
        & \iddots
        & \vdots
    \end{pmatrix}}_{\text{Hankel}} ,
\end{align}
and
\begin{align*}
F_{j,k,\ell} = \frac{\ell}{\ell+k}H_{j,k,\ell}.
\end{align*}
Thus, each product $F^{j,\ell}C_{:,j}$ is the sum of a Toeplitz and a Hankel matrix-vector product, both of which can be evaluated efficiently by FFTs. It is indicated by \cref{eq:cheb_volterra_computational_region} that when $\ell \geqslant 2n+5$ the indices for the desired entries of $\mA_{:,\ell}$ are pinched into two segments and the gap between these two segments widens as $\ell$ increases. In such scenarios, only the products of $C_{:,j}$ with two submatrices of $F^{j,\ell}$ can be computed via FFTs to save even more computation. 
 
The complete algorithm for constructing $\mG$ is summarized in \Cref{alg:cheb_volterra_matrix}. To give the asymptotic complexities, we assume that the $N \times N$ principal finite section of $\mG$ is constructed. The overall cost is $\mO(mn\log n N)$ flops.

\begin{algorithm}[t]
\caption{Construction of spectral approximation to smooth linear Volterra operator}
\label{alg:cheb_volterra_matrix}
\begin{algorithmic}[1]

\REQUIRE
Kernel $K(x,y)$ and truncation size $N$.

\ENSURE
The finite truncation $\mG$.

\STATE Compute the Chebyshev coefficients $C$ in
\cref{eq:kernel_duffy_expansion_st}.
\hfill
$\triangleright\ \mO\!\bigl(nm(\log n+\log m)\bigr)$

\STATE Compute the top dense fibers of $H_{j,k,\ell}$ by following
\cref{thm:H_j0ell_recurrence,thm:H_kell_recurrence}.
\hfill
$\triangleright\ \mO(mnN)$

\STATE Compute the required banded part of $H_{j,k,\ell}$ by following
\cref{thm:H_initial_layers,thm:H_bottom_recurrence}.
\hfill
$\triangleright\ \mO\!\bigl(m(n+m)N\bigr)$

\STATE Assemble $\mA$ using
\cref{eq:A_rell_chebyshev_ell0,eq:A_toeplitz_hankel}.
\hfill
$\triangleright\ \mO(mn\log n\,N)$

\STATE Form $\mG$ by
\cref{eq:volterra_matrix_chebyshev}.
\hfill
$\triangleright\ \mO(nN)$

\end{algorithmic}
\end{algorithm}

For $N \gg n$, since the $N \times N$ truncation of $\mG$ has $n+1$ dense leading rows and an $(n+1, n+1)$ band, both the matrix construction and linear solve via, e.g., adaptive QR \cite{olv}, are linear in $N$. 

Finally, we remark that the spectral approximation to smooth Volterra operators obtained in the same way but with Legendre polynomials is strictly banded. However, the cost of construction increases to $\mO(mn^2N)$ due to the absence of the Toeplitz-plus-Hankel structure \cref{eq:toeplitz_hankel}.


\section{Evaluation of nonlinear Volterra integrals with smooth kernels}\label{sec:nonlinear_volt}
The nonlinear Volterra integral operator \cref{eq:nonlinear_volterra} is the key building block of nonlinear Volterra equations, which are typically solved by iterative methods such as Picard or Newton methods. In turn, such methods require repeated evaluation of the nonlinear Volterra integral operator or its Fr\'{e}chet derivative, so their overall efficiency depends strongly on that of this evaluation.

Suppose that the current approximation to the solution is $u(x)= \sum_{j=0}^{N}u_jT_j(x)$, which is being refined by the Picard iteration. A most common approach works on an $(N+1)$-point Chebyshev grid and for each $0\leqslant i \leqslant N$ evaluates $\int_{-1}^{x_i}K\bigl(x_i,y,u(y)\bigr)\,\md y$ using the Clenshaw algorithm for evaluating $u(y)$ at the Chebyshev points of $[-1, x_i]$, followed by the Clenshaw--Curtis quadrature. Since this is done for all $\{x_i\}_{i=0}^N$, the total complexity is $\mO(N^3)$.

We take a different approach for a cheaper evaluation of \cref{eq:nonlinear_volterra} by relating the evaluation of $u(y)$ for different $x_i$ with the restriction operator. In fact, applying the bi-affine transform \cref{eq:symmetric_affine_map}, we have
\begin{align}
\int_{-1}^{x}K\bigl(x,y,u(y)\bigr)\,\md y
    =
    \frac{1+x}{2}
    \int_{-1}^{1}
    K\Bigl(x,\psi(x,t),u\bigl(\psi(x,t)\bigr)\Bigr)\,\md t .
\end{align}
In \cref{eq:duffy_volterra} or \cref{eq:volterra_cheb_approximation}, since the bivariate restricted function $u\bigl(\psi(x,t)\bigr)$, as part of the integrand, is integrated with respect to $t$, we only care to know the result as a univariate function in $x$. Here, the situation is different because the nonlinearity prevents us from immediately getting rid of the variable $t$, so $u\bigl(\psi(x,t)\bigr)$ must be treated as a bivariate function. Our approach is to construct a bivariate Chebyshev approximant to $u\bigl(\psi(x,t)\bigr)$ using a differential recurrence relation (see \cref{eq:nonlinear_dk_differential_recurrence} below), followed by evaluating this approximant on a Chebyshev grid by two-dimensional DCTs. This is key to reducing the computational cost.

\subsection{Bivariate Chebyshev approximant of $u\bigl(\psi(x,t)\bigr)$}
By \cref{thm:restriction_analytical_expression}, we have
\begin{align}\label{eq:restricted_u_expansion}
u\bigl(\psi(x,t)\bigr)
    =
    \sum_{k=0}^{N}\hat{C}_k(x)T_k(t) = \sum_{k=0}^{N}\sum_{i=0}^{N}\hat{C}_{ik}T_i(x)T_k(t),
\end{align}
where
\begin{align}\label{eq:restricted_u_coefficients}
    \hat{C}_k(x)
    =
    \frac{2-\delta_{k0}}{2}
    \left(\frac{1+x}{2}\right)^k
    \left(
        \delta_{k0}u_0
        +
        \sum_{\ell=0}^{N-k}
        u_{k+\ell}P_\ell^{(0,2k)}(x)
        -
        \sum_{\ell=0}^{N-k-1}
        u_{k+\ell+1}P_\ell^{(0,2k)}(x)
    \right)
\end{align}
and $\hat{C}_{ik}$ are the Chebyshev coefficients of $\hat{C}_k(x)$. Ideally, $\hat{C}_{ik}$ can be obtained from evaluating the closed-form expression \cref{eq:restricted_u_coefficients} on the $(N+1)$-point Chebyshev grid and a value-to-coefficient transform. This is, however, hardly practical---evaluation of Jacobi polynomials with large parameters is subject to over and underflow and, more importantly, loss of relative accuracy. In addition, the evaluation of \cref{eq:restricted_u_coefficients} using the Clenshaw algorithm would incur a total cost of $\mO(N^3)$ flops. To this end, we compute $\hat{C}_{ik}$ using the recurrence relation satisfied by $\hat{C}_k(x)$, which is given by the following theorem.
\begin{theorem}\label{thm:nonlinear_dk_differential_recurrence}
For $0\leqslant k\leqslant N-1$,
\begin{align}\label{eq:nonlinear_dk_differential_recurrence}
\frac{2-\delta_{k0}}{2}
        \bigl(
            (1+x)\frac{\md}{\md x}+k+1
        \bigr)\hat{C}_{k+1}(x) = 
        \bigl(
            (1+x)\frac{\md}{\md x}-k
        \bigr)\hat{C}_k(x).
\end{align}
\end{theorem}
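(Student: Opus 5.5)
The plan is to avoid the explicit Jacobi representation \cref{eq:restricted_u_coefficients} altogether and work directly from the bivariate function $f(x,t):=u\bigl(\psi(x,t)\bigr)=\sum_{k=0}^{N}\hat{C}_k(x)T_k(t)$. The key observation is a first-order PDE satisfied by $f$. By \cref{eq:symmetric_affine_map}, $\partial_x\psi=(1+t)/2$ and $\partial_t\psi=(1+x)/2$. The chain rule therefore gives
\begin{align*}
(1+x)\,\partial_x f(x,t)=(1+x)\frac{1+t}{2}\,u'\bigl(\psi(x,t)\bigr)=(1+t)\,\partial_t f(x,t).
\end{align*}
The left-hand side expands as $\sum_k (1+x)\hat{C}_k'(x)T_k(t)$. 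So if $g_k(x)$ denotes the $k$th Chebyshev coefficient in $t$ of $(1+t)\partial_t f$, matching coefficients of $T_k(t)$ yields $(1+x)\hat{C}_k'(x)=g_k(x)$ for every $k$.

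Substituting $g_k=(1+x)\hat{C}_k'$ turns \cref{eq:nonlinear_dk_differential_recurrence} into the coefficient identity
\begin{align*}
\frac{2-\delta_{k0}}{2}\bigl(g_{k+1}+(k+1)\hat{C}_{k+1}\bigr)=g_k-k\hat{C}_k,\qquad 0\leqslant k\leqslant N-1.
\end{align*}
For fixed $x$, this involves only the Chebyshev coefficients $a_k=\hat{C}_k$ of a univariate polynomial $h(t)$ of degree $N$ and the coefficients $g_k$ of $(1+t)h'(t)$. Hence it suffices to prove this identity for an arbitrary polynomial $h$, and by linearity it is enough to take $h=T_n$. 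A quick check with $h=T_1$, where $g_0=g_1=1$ and $a_1=1$, confirms both the $k=0$ case and the role of the factor $(2-\delta_{k0})/2$.

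To prove the identity, I will write $h'=\sum_k b_kT_k$. The standard Chebyshev derivative relation, which follows from \cref{eq:chebyshev_derivative}, states
\begin{align*}
c_{k-1}b_{k-1}-b_{k+1}=2k\,a_k,\qquad c_0=2,\ c_k=1\ (k\geqslant1).
\end{align*}
The multiplication by $t$ is encoded in $\mM_0[t]$: the coefficients of $t\,h'$ are $\tfrac12(c_{k-1}b_{k-1}+b_{k+1})$, with the usual doubling at $k=1$ and the halving convention at $k=0$. This gives
\begin{align*}
g_k=b_k+\tfrac12\bigl(c_{k-1}b_{k-1}+b_{k+1}\bigr).
\end{align*}
Substituting into the target identity, the $b$-terms should telescope. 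Specifically, I will rewrite $(k+1)a_{k+1}$ and $k a_k$ via the derivative relation and check that all terms cancel. This is a direct algebraic verification.

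The main obstacle is the bookkeeping at the low indices. The cases $k=0$ and $k=1$ involve the exceptional entries of $\mM_0[t]$ and the weight $c_0=2$, and this is exactly where the factor $(2-\delta_{k0})/2$ must emerge. The top index also needs care: since $\deg_t f\leqslant N$, we have $\hat{C}_{N+1}=0$ and $b_N=b_{N+1}=0$. I would handle $k=0$ and $k=1$ separately and treat $k\geqslant2$ uniformly. As a sanity check, I would also verify the case $k=0$ against \cref{eq:restricted_u_coefficients} directly.
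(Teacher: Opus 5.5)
Your proposal is correct, and it takes a genuinely different route from the paper. The paper proves \cref{eq:nonlinear_dk_differential_recurrence} by substituting the closed form \cref{eq:restricted_u_coefficients}. It then applies \cref{eq:jacobi_derivative} to the right-hand side and \cref{eq:jacobi_differential_shift} with $\beta=2k+2$ to the left-hand side. You instead use the transport identity $(1+x)\partial_x f=(1+t)\partial_t f$, which comes from the bi-affine map $\psi$. This reduces the claim to a univariate relation between the Chebyshev coefficients of $h$ and of $(1+t)h'$.

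The algebra you defer does close, and cleanly. Your two relations give $g_{k+1}+(k+1)a_{k+1}=b_{k+1}+c_kb_k$ for all $k\geqslant0$. They also give $g_k-ka_k=b_k+b_{k+1}$ for $k\geqslant1$, and $g_0=b_0+\tfrac12 b_1$. So the two sides agree for $k\geqslant1$, and at $k=0$ the weight $c_0=2$ is exactly what the factor $(2-\delta_{k0})/2$ compensates.

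Your route avoids Jacobi polynomials and \cref{thm:restriction_analytical_expression} entirely. It also explains why the recurrence exists, and why $k=0$ is exceptional.

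Its one dependence is that you treat $\hat C_k$ as the $t$-Chebyshev coefficients in \cref{eq:restricted_u_expansion}. If \cref{eq:restricted_u_coefficients} is taken as the definition of $\hat C_k$, identifying the two requires \cref{thm:restriction_analytical_expression}. It also requires the contiguous relation $2jP_{j-k}^{(-1,2k)}=(j+k)\bigl(P_{j-k}^{(0,2k)}-P_{j-k-1}^{(0,2k)}\bigr)$. The paper's verification avoids this step because it works on the closed form directly.
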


\begin{proof}
This recurrence relation can be verified by substituting \cref{eq:restricted_u_coefficients} into \cref{eq:nonlinear_dk_differential_recurrence}, applying \cref{eq:jacobi_derivative} to the right-hand side and \cref{eq:jacobi_differential_shift} specialized to $\beta=2k+2$ to the left-hand side.
\end{proof}

Given $\hat{C}_k(x)$, we can solve \cref{eq:nonlinear_dk_differential_recurrence} for $\hat{C}_{k+1}(x)$ without enforcing a boundary condition. To see this, let $\mathbb{P}_N$ be the space of polynomials of degree at most $N$. For $k\geqslant0$,
\begin{align*}
    \bigl((1+x)\frac{\md}{\md x}+k+1\bigr)(1+x)^j
    =
    (j+k+1)(1+x)^j.
\end{align*}
In other words, the operator $(1+x)\md/\md x+k+1$ maps $\mathbb{P}_N$ into itself and its matrix representation in the monomial basis $\{(1+x)^j\}_{j=0}^{N}$ is therefore diagonal with nonzero diagonal entries. Hence, the operator is invertible on $\mathbb{P}_N$, and \cref{eq:nonlinear_dk_differential_recurrence} alone uniquely determines $\hat{C}_{k+1}(x)$ without the need of a boundary condition.

The differential recurrence \cref{eq:nonlinear_dk_differential_recurrence} can be solved using the ultraspherical spectral method~\cite{olv}, which leads to the finite-dimensional system
\begin{align}\label{eq:nonlinear_dk_coefficient_recurrence}
    \frac{2-\delta_{k0}}{2}
    \bigl(
        M_1[1+x]D_1
        +
        (k+1)S_0
    \bigr)\hat{C}_{:,k+1}
    =
    \bigl(
        M_1[1+x]D_1
        -
        kS_0
    \bigr)\hat{C}_{:,k}.
\end{align}
Here, $M_1[1+x]$, $D_1$, and $S_0$ denote the $(N+1)\times(N+1)$ finite truncations of $\mM_1[1+x]$, $\mD_1$, and $\mS_0$, respectively. The matrices on both sides have bandwidth $(0,2)$. Consequently, each recurrence step requires $\mO(N)$ operations, and all the coefficients $\hat{C}_{:,:}$ can be obtained at a cost of $\mO(N^2)$ operations.

To start the recurrence, we need the Chebyshev coefficients of $\hat{C}_0(x)$, which, as given by \cref{eq:restricted_u_coefficients}, is a Legendre series. Its Chebyshev coefficients can be obtained either by evaluating this Legendre series on a Chebyshev grid using the Clenshaw algorithm followed by a value-to-coefficient transform, or directly by a fast Legendre-to-Chebyshev transform.

Since $\psi(x,t)=\psi(t,x)$, the function $u\bigl(\psi(x,t)\bigr)$ is symmetric in $x$ and $t$ and so is its Chebyshev coefficient matrix $\hat{C}=(\hat{C}_{ij})_{i,j=0}^{N}$, i.e., $\hat{C}_{ij}=\hat{C}_{ji}$. This symmetry is exploited to reduce the computational cost by a factor of 2.

\subsection{Numerical quadrature}
After the coefficient matrix $\hat{C}$ of $u\bigl(\psi(x,t)\bigr)$ has been constructed, its values on a tensor-product Chebyshev grid can be computed efficiently by two-dimensional DCTs. Since the degree for adequate resolution of the composite integrand $K\Bigl(x,\psi(x,t),u\bigl(\psi(x,t)\bigr)\Bigr)$ may be larger than $N$, we use an independent degree $M \geqslant N$. Let $\{x_i^{(M)}\}_{i=0}^M$ and $\{t_j^{(M)}\}_{j=0}^M$ be the Chebyshev grids on $[-1,1]$. The coefficient-to-value transform via two-dimensional DCTs gives
\begin{align}
U_{ij}^{(M)}
    =
    u\bigl(\psi(x_i^{(M)},t_j^{(M)})\bigr).
\end{align}
The nonlinear Volterra integral is then evaluated using the Clenshaw--Curtis quadrature
\begin{align}\label{eq:nonlinear_volterra_quadrature}
    v_i^{(M)}
    =
    \frac{1+x_i^{(M)}}{2}
    \sum_{j=0}^{M}w_j^{(M)}
    K\bigl(
        x_i^{(M)},
        \psi(x_i^{(M)},t_j^{(M)}),
        U_{ij}^{(M)}
    \bigr),
\end{align}
where $w_j^{(M)}$ are the Clenshaw--Curtis weights.
Finally, we obtain the Chebyshev coefficients $\{\hat{v}_i^{(M)}\}_{i =0}^M$ from $\{v_i^{(M)}\}_{i =0}^M$ using DCT.

The degree $M$ of the integrand can be selected adaptively. Starting from $M=N$, we evaluate the integral on grids of degrees $M$ and $2M$. Since the Chebyshev grids are nested, $x_{2j}^{(2M)}=x_j^{(M)}$ for $0\leqslant j\leqslant M$, the two approximations can be compared at their common nodes. Let $\bv^{(2M)}$ denote the value vector of nonlinear Volterra integral evaluated on the finer grid, and define
\begin{align}\label{eq:nonlinear_adaptive_indicators}
    \eta
    =
    \frac{
        \max\limits_{0\leqslant j\leqslant M}
        \left|v_j^{(M)}-v_{2j}^{(2M)}\right|
    }{
        \max\left\{1,\left\|\bv^{(2M)}\right\|_\infty\right\}
    },
\end{align}
which measures the difference between the two grids. The degree-$2M$ result is accepted when $\eta$ is below a prescribed tolerance $\epsilon$ and a coefficient plateau of sufficiently small magnitude is detected. Otherwise, $M$ is doubled and the process is repeated. The tolerance $\epsilon$ is usually chosen as a moderate multiple of machine precision. 

The complete procedure is summarized in \Cref{alg:nonlinear_volterra_evaluator}. Constructing the coefficient matrix $\hat{C}$ requires $\mO(N^2)$ operations, and the two-dimensional DCTs require $\mO(M^2\log M)$ operations, whereas pointwise kernel evaluation and row-wise quadrature require $\mO(M^2)$ operations, assuming constant-cost kernel evaluations. Therefore, the total complexity is $\mO(M^2\log M)$, and in particular $\mO(N^2\log N)$ when $M=\mO(N)$.

\begin{algorithm}[t] 
\caption{Adaptive evaluation of smooth nonlinear Volterra integral operator} 
\label{alg:nonlinear_volterra_evaluator} 
\begin{algorithmic}[1] 
 
\REQUIRE 
Chebyshev coefficient vector $\bu$ of the degree-$N$ approximant to the solution; kernel $K(x,y,z)$; quadrature tolerance $\epsilon$.
 
\ENSURE 
Chebyshev coefficients of \cref{eq:nonlinear_volterra}. 
 
\STATE Compute $\hat{C}_{:,0}$ from 
\cref{eq:restricted_u_coefficients}. 
\hfill $\triangleright\ \mO(N^2)$ 
 
\STATE Recurse for $\hat{C}_{:,1},\ldots,\hat{C}_{:,N}$ using 
\cref{eq:nonlinear_dk_coefficient_recurrence}. 
\hfill $\triangleright\ \mO(N^2)$ 
 
\STATE Set $M\gets N/2$, $\eta\gets\infty$, and 
$\mathrm{plateau}\gets\mathrm{false}$. 
 
\WHILE{$\eta>\epsilon\ \vee\ \neg\mathrm{plateau}$} 
    \STATE Set $M\gets2M$ 
 
    \STATE Compute $\{v_i^{(M)}\}_{i=0}^M$ and $\{v_i^{(2M)}\}_{i=0}^{2M}$ following \cref{eq:nonlinear_volterra_quadrature}. 
    \hfill $\triangleright\ \mO(M^2\log M)$ 
 
    \STATE Compute $\{\hat{v}_i^{(2M)}\}_{i=0}^{2M}$ from $\{v_i^{(2M)}\}_{i=0}^{2M}$ using DCT. 
    \hfill $\triangleright\ \mO(M\log M)$
 
    \STATE Compute $\eta$ following \cref{eq:nonlinear_adaptive_indicators} and check the coefficient plateau of $\{\hat{v}_i^{(2M)}\}_{i=0}^{2M}$. 
 
\ENDWHILE 
 
\STATE Truncate $\{\hat{v}_i^{(2M)}\}_{i=0}^{2M}$ according to $\epsilon$.
 
\end{algorithmic} 
\end{algorithm}

Notably, the adaptivity of $M$ ensures the resolution of the integrand and, therefore, the quadrature, whereas the solution degree $N$ is usually dictated by the nonlinear solver. In a Picard iteration, $N$ is increased when the Chebyshev coefficients of the approximate solution do not exhibit a plateau, i.e., when the current iterate is not yet sufficiently resolved. In such a case, the current iterate is elongated by zero-padding for the next iteration. Once a coefficient plateau is detected, the nonlinear residual is checked to gauge the convergence of the Picard iteration.

\section{Spectral approximation to weakly singular Volterra integral operators}\label{sec:weaksingular_volt}
To construct the spectral approximation to the weakly singular Volterra operator \cref{eq:singular_volterra}, we use the Chebyshev fractional polynomials (CFPs)~\cite{pu}
\begin{align}\label{eq:cfp_definition}
    Q_n^{\sigma,\gamma}(x)
    =
    \left(\frac{1+x}{2}\right)^\sigma
    T_n\bigl(\varphi(x)\bigr),
\end{align}
where
\begin{align}\label{eq:phi}
\varphi(x)
    =
    2\left(\frac{1+x}{2}\right)^\gamma-1.
\end{align}
Here, $\gamma>0$ is chosen such that $1-\mu=p\gamma$ for some $p\in\mathbb{N}_+$. This ensures that $\bigl((1+x)/2\bigr)^{1-\mu}$ has a finite expansion in the $Q_n^{0,\gamma}$ basis and, consequently, that the singular Volterra integral can also be represented in the same CFP basis. The other parameter $\sigma>-1$ is chosen so that the weight $\bigl((1+x)/2\bigr)^{\sigma}$ captures the singular behavior of the problem being considered. The CFPs are orthogonal with respect to the weight function
\begin{align}\label{eq:cfp_orthogonality}
    w_{\sigma,\gamma}(x)
    =
    \left(\frac{1+x}{2}\right)^{-2\sigma}\hspace{-3mm}
    \frac{\varphi'(x)}
    {\sqrt{1-\varphi(x)^2}},
\end{align}
in the sense that
\begin{align}\label{eq:cfp_orthogonality_relation}
    \int_{-1}^{1}
    Q_i^{\sigma,\gamma}(x)
    Q_r^{\sigma,\gamma}(x)
    w_{\sigma,\gamma}(x)\,\md x
    =
    \frac{\pi}{2-\delta_{r0}}\delta_{ri}.
\end{align}

When $K(x,y)$ is constant, \cref{eq:singular_volterra} reduces to a fractional integral operator, whose spectral approximation in the CFP basis is studied in~\cite{liu}. For general $K(x,y)$, we again apply the Duffy transformation to obtain
\begin{align}\label{eq:weak_duffy_transform_integral}
    (\mV u)(x)
    =
    \left(\frac{1+x}{2}\right)^{1-\mu}
    \int_{-1}^{1}
    (1-t)^{-\mu}
    K\bigl(x,\psi(x,t)\bigr)
    (\mC_{[-1,x]}u)(t)
    \,\md t.
\end{align}
As in the smooth linear case, the Duffy-transformed kernel is approximated by a finite bivariate CFP expansion
\begin{align}\label{eq:weak_kernel_cfp_expansion}
     K\bigl(x,\psi(x,t)\bigr) \approx 
    \sum_{i=0}^{n}\sum_{j=0}^{m}
    \tilde{C}_{ij}Q_i^{0,\gamma}(x)Q_j^{0,\gamma}(t),
\end{align}
where the truncation degrees $n$ and $m$ are chosen to achieve the desired accuracy in approximating $K\bigl(x,\psi(x,t)\bigr)$. Since \cref{eq:weak_kernel_cfp_expansion} is equivalent to a bivariate Chebyshev approximation of $K\Bigl(\varphi^{-1}(\xi), \psi\bigl(\varphi^{-1}(\xi),\varphi^{-1}(\zeta)\bigr)\Bigr)$ in the variables $\xi$ and $\zeta$, the convergence of the finite bivariate CFP expansion is governed by the regularity of this triply composite function. In particular, if $1/\gamma\in\mathbb{N}_+$, then $\varphi^{-1}(\cdot)$ is a polynomial, and the composite function is smooth whenever $K$ is smooth. In this case, the CFP coefficients $\tilde{C}_{ij}$ can be computed using a two-dimensional DCT. 

As in the smooth linear case, we apply the weakly singular Volterra operator to the CFP quasimatrix $\mathbf{Q}^{\sigma,\gamma}(x) = \Bigl(
Q_0^{\sigma,\gamma}(x)
\,\Big|\,
Q_1^{\sigma,\gamma}(x)
\,\Big|\,
Q_2^{\sigma,\gamma}(x)
\,\Big|\,
\cdots \Bigr)$, which gives
\begin{align}\label{eq:weak_volterra_matrix}
\mV\mathbf{Q}^{\sigma,\gamma}(x)
    =
    \Bigl(
        \mV Q_0^{\sigma,\gamma}(x)
        \,\Big|\,
        \mV Q_1^{\sigma,\gamma}(x)
        \,\Big|\,
        \mV Q_2^{\sigma,\gamma}(x)
        \,\Big|\,
        \cdots
    \Bigr)
    =
    \mathbf{Q}^{\sigma,\gamma}(x)\tilde{\mG},
\end{align}
where $\tilde{\mG}$ is the infinite-dimensional spectral approximation to $\mV$ that we seek. To evaluate $\mV Q_{\ell}^{\sigma,\gamma}(x)$ for $\ell \geqslant 0$, we need $(\mC_{[-1,x]}Q_{\ell}^{\sigma,\gamma})(t)$. It can be verified that for $\ell\geqslant0$
\begin{align*}
    \frac{1+\psi(x,t)}{2}
    =
    \frac{(1+x)(1+t)}{4}~~
    \text{ and }~~
    \varphi\bigl(\psi(x,t)\bigr)
    =
    \psi\bigl(\varphi(t),\varphi(x)\bigr),
\end{align*}
which imply 
\begin{align}\label{eq:restrictQ}
(\mC_{[-1,x]}Q_{\ell}^{\sigma,\gamma})(t) = 
    Q_\ell^{\sigma,\gamma}\bigl(\psi(x,t)\bigr)
    =
    \left(\frac{1+x}{2}\right)^\sigma
    \left(\frac{1+t}{2}\right)^\sigma
    \bigl(
        \mC_{[-1,\varphi(t)]}T_\ell
    \bigr)\bigl(\varphi(x)\bigr).
\end{align}
Substituting \cref{eq:weak_kernel_cfp_expansion,eq:restrictQ} into \cref{eq:weak_duffy_transform_integral} yields the $\ell$th column of $\mV\mathbf{Q}^{\sigma,\gamma}(x)$
\begin{align}\label{eq:weak_volterra_basis_action}
    (\mV Q_\ell^{\sigma,\gamma})(x)
    =
    \left(\frac{1+x}{2}\right)^{1-\mu}
    \underbrace{
        \sum_{i=0}^{n}\sum_{j=0}^{m}
        \tilde{C}_{ij}Q_i^{\sigma,\gamma}(x)
        \int_{-1}^{1}
        (1-t)^{-\mu}
        Q_j^{\sigma,\gamma}(t)
        \bigl(
            \mC_{[-1,\varphi(t)]}T_\ell
        \bigr)\bigl(\varphi(x)\bigr)
        \,\md t
    }_{\displaystyle
        =
        \sum_{r=0}^{n+\ell}
        \tilde{A}_{r\ell}Q_r^{\sigma,\gamma}(x)
    }.
\end{align}
To construct $\tilde{\mG}$, we first represent the double sum in \cref{eq:weak_volterra_basis_action} by a CFP series with coefficients $\tilde{A}_{r\ell}$. The following theorem gives closed-form expressions for $\tilde{A}_{r\ell}$.

\begin{theorem}\label{thm:weakly_singular_volterra_matrix}
The spectral approximation to the weakly singular Volterra integral operator \cref{eq:singular_volterra} can be formed as
\begin{align}\label{eq:weakly_singular_volterra_matrix}
        \tilde\mG
        =
        \tilde\mM\left[
            \left(\frac{1+x}{2}\right)^{1-\mu}
        \right]
        \tilde{\mA},
\end{align}
where $\tilde\mM[\bigl((1+x)/2\bigr)^{1-\mu}]$ denotes multiplication by $\bigl((1+x)/2\bigr)^{1-\mu}$ in the basis $\{Q_\ell^{\sigma,\gamma}\}_{\ell\geqslant0}$. The entries of $\tilde{\mA}=(\tilde{A}_{r,\ell})_{r,\ell\geqslant0}$ are
\begin{subnumcases}{
    \tilde{A}_{r\ell}
    =
    \label{eq:weak_A_entries}
    }
    \displaystyle
    \sum_{i=0}^{n}\delta_{ri}\sum_{j=0}^{m}
     \tilde{C}_{ij} \tilde{H}_{j,0,0},
    &
    \(\ell=0\),
    \label{eq:weak_A_entries_ell0}
    \\
    \displaystyle
    \frac{2-\delta_{r0}}{2}
    \sum_{i=0}^{n}\sum_{j=0}^{m}
    \tilde{C}_{ij}
    \left(
        \frac{\ell}{\ell+|i-r|}
        \tilde{H}_{j,|i-r|,\ell}
        +
        \frac{\ell}{\ell+i+r}
        \tilde{H}_{j,i+r,\ell}
    \right),
    &
    \(\ell\geqslant1\).
    \label{eq:weak_A_entries_ellpos}
\end{subnumcases}
For $0\leqslant k\leqslant\ell$, the quantity $\tilde{H}_{j,k,\ell}$ is defined by
\begin{align}\label{eq:weak_H_definition}
        \tilde{H}_{j,k,\ell}
        =
        \int_{-1}^{1}
        (1-t)^{-\mu}
        \left(\frac{1+t}{2}\right)^{k\gamma}
        Q_j^{\sigma,\gamma}(t)
        P_{\ell-k}^{(-1,2k)}
        \bigl(\varphi(t)\bigr)
        \,\md t,
\end{align}
whereas $\tilde{H}_{j,k,\ell}=0$ for $k>\ell$.
\end{theorem}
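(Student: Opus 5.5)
The plan mirrors the proof of \cref{thm:volterra_matrix}, with the Chebyshev variable $x$ replaced by $\varphi(x)$ and Chebyshev orthogonality replaced by the CFP orthogonality \cref{eq:cfp_orthogonality_relation}.

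First, the factorization \cref{eq:weakly_singular_volterra_matrix} follows once we accept the definition of $\tilde{A}_{r\ell}$ in \cref{eq:weak_volterra_basis_action}. The prefactor $\bigl((1+x)/2\bigr)^{1-\mu}$ multiplies the CFP series $\sum_r\tilde{A}_{r\ell}Q_r^{\sigma,\gamma}(x)$, and by definition this action is $\tilde\mM$ applied to the coefficient column. The choice $1-\mu=p\gamma$ guarantees that $\tilde\mM$ is a well-defined banded operator in the $Q^{\sigma,\gamma}$ basis. The main work is therefore the formula for $\tilde{A}_{r\ell}$.

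For the coefficients, take the inner product of the double sum in \cref{eq:weak_volterra_basis_action} with $Q_r^{\sigma,\gamma}$ under the weight $w_{\sigma,\gamma}$. By \cref{eq:cfp_orthogonality_relation},
\[
\tilde{A}_{r\ell}=\frac{2-\delta_{r0}}{\pi}\sum_{i,j}\tilde{C}_{ij}\int_{-1}^1(1-t)^{-\mu}Q_j^{\sigma,\gamma}(t)\left(\int_{-1}^1 Q_i^{\sigma,\gamma}(x)Q_r^{\sigma,\gamma}(x)\bigl(\mC_{[-1,\varphi(t)]}T_\ell\bigr)\bigl(\varphi(x)\bigr)w_{\sigma,\gamma}(x)\,\md x\right)\md t.
\]
Next substitute $\xi=\varphi(x)$ in the inner integral. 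The $\bigl((1+x)/2\bigr)^{2\sigma}$ in $Q_iQ_r$ cancels the $\bigl((1+x)/2\bigr)^{-2\sigma}$ in the weight, and $\varphi'(x)\,\md x/\sqrt{1-\varphi^2}$ becomes $\md\xi/\sqrt{1-\xi^2}$. The inner integral is then
\[
\int_{-1}^1 T_i(\xi)T_r(\xi)\,T_\ell\bigl(\psi(\varphi(t),\xi)\bigr)\frac{\md\xi}{\sqrt{1-\xi^2}}.
\]
This is exactly the integral handled in the proof of \cref{thm:volterra_matrix}, with $t$ replaced by $\varphi(t)$. Apply \cref{eq:chebyshev_product} to write $T_iT_r=\tfrac12(T_{|i-r|}+T_{i+r})$, then use the symmetry of $\psi$ and \cref{eq:restriction_analytical_expression_chebyshev}. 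Each term $T_k$ produces $\pi\frac{\ell}{\ell+k}\bigl(\frac{1+\varphi(t)}{2}\bigr)^kP_{\ell-k}^{(-1,2k)}(\varphi(t))$.

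Since $(1+\varphi(t))/2=\bigl((1+t)/2\bigr)^\gamma$, the outer integral becomes $\tilde{H}_{j,k,\ell}$ as defined in \cref{eq:weak_H_definition}, which gives \cref{eq:weak_A_entries_ellpos}.

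For $\ell=0$, we have $T_0\equiv1$, so the inner integral is $\frac{\pi}{2-\delta_{r0}}\delta_{ri}$. The outer integral is $\int(1-t)^{-\mu}Q_j^{\sigma,\gamma}(t)\,\md t$, which equals $\tilde{H}_{j,0,0}$ because $P_0^{(-1,0)}=1$. This gives \cref{eq:weak_A_entries_ell0}.

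Finally, the vanishing for $k>\ell$ follows as in \cref{thm:volterra_matrix}: $\bigl(\mC_{[-1,\varphi(t)]}T_\ell\bigr)$ has degree at most $\ell$, so its inner product with $T_k$ vanishes by orthogonality, consistent with the upper triangularity in \cref{thm:restriction_recursion}.

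The step I expect to need the most care is the change of variables $\xi=\varphi(x)$, in particular checking that the weight factors cancel exactly. A second point is verifying that \cref{eq:restriction_analytical_expression_chebyshev} applies at the argument $\varphi(t)\in[-1,1]$. Its endpoint value $t=-1$ is only a measure-zero issue, and there the integrability of $(1-t)^{-\mu}$ near $t=1$ is ensured by $\mu<1$.
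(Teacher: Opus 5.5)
Your proposal is correct and follows essentially the same route as the paper's proof. You take the $w_{\sigma,\gamma}$-weighted inner product with $Q_r^{\sigma,\gamma}$ and substitute $s=\varphi(x)$, which reduces the inner integral to the Chebyshev integral of \cref{thm:volterra_matrix} with $t$ replaced by $\varphi(t)$; you then apply \cref{eq:chebyshev_product} and \cref{eq:restriction_analytical_expression_chebyshev}, treat $\ell=0$ separately, and remove the $k>\ell$ terms by the degree argument, while your explicit check that the $\bigl((1+x)/2\bigr)^{\pm2\sigma}$ factors cancel and that $\varphi'(x)\,\md x/\sqrt{1-\varphi(x)^2}=\md s/\sqrt{1-s^2}$ fills in what the paper leaves implicit in ``denoting $s=\varphi(x)$''.
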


\begin{proof}
Equation \cref{eq:weakly_singular_volterra_matrix} follows immediately from \cref{eq:weak_volterra_basis_action}.

The coefficient $\tilde{A}_{r\ell}$ is obtained by taking the inner product of the double sum in \cref{eq:weak_volterra_basis_action} with $Q_r^{\sigma,\gamma}(x)$ with respect to $w_{\sigma, \gamma}(x)$. Using the CFP orthogonality \cref{eq:cfp_orthogonality_relation} and denoting $s=\varphi(x)$, we have
\begin{align}\label{eq:weak_A_coefficient_integral}
\tilde{A}_{r\ell}
    &=
    \frac{2-\delta_{r0}}{\pi}
    \sum_{i=0}^{n}\sum_{j=0}^{m} \tilde{C}_{ij}
    \int_{-1}^{1}
    (1-t)^{-\mu}Q_j^{\sigma,\gamma}(t)
    \left(
        \int_{-1}^{1}
        T_i(s)T_r(s)
        \bigl(
            \mC_{[-1,\varphi(t)]}T_\ell
        \bigr)(s)
        \frac{\md s}{\sqrt{1-s^2}}
    \right)
    \md t.
\end{align}
For $\ell=0$, $\mC_{[-1,\varphi(t)]}T_0(s)=1$, from which \cref{eq:weak_A_entries_ell0} follows.

For $\ell\geqslant1$, \cref{eq:chebyshev_product,eq:restriction_analytical_expression_chebyshev,eq:jacobi_minus_one_zero} lead to
\begin{align}
    &\int_{-1}^{1}
    T_i(s)T_r(s)
    \bigl(
        \mC_{[-1,\varphi(t)]}T_\ell
    \bigr)(s)
    \frac{\md s}{\sqrt{1-s^2}}
    \notag\\
    &=
    \frac{\pi}{2}
    \left(
        \frac{\ell}{\ell+|i-r|}
        \left(\frac{1+t}{2}\right)^{\gamma|i-r|}
        P_{\ell-|i-r|}^{(-1,2|i-r|)}
        \bigl(\varphi(t)\bigr)
        +
        \frac{\ell}{\ell+i+r}
        \left(\frac{1+t}{2}\right)^{\gamma(i+r)}
        P_{\ell-i-r}^{(-1,2(i+r))}
        \bigl(\varphi(t)\bigr)
    \right).
    \label{eq:weak_inner_chebyshev_coefficient}
\end{align}
When $\ell < |i-r|$ or $\ell < i+r$, the corresponding summand vanishes following a similar argument for \cref{thm:volterra_matrix}. Substituting \cref{eq:weak_inner_chebyshev_coefficient} into \cref{eq:weak_A_coefficient_integral} gives \cref{eq:weak_A_entries_ellpos}.
\end{proof}

\subsection{Sparse structure}
For the zeroth column of $\tilde{\mA}$, by \cref{eq:weak_A_entries_ell0} $\tilde A_{r0}=0$ for any $r>n$. For $\ell \geqslant 1$, \cref{eq:weak_A_entries_ellpos} implies that $\tilde A_{r\ell}=0$ whenever $i+r>\ell$ and $|i-r|=r-i>\ell$ hold simultaneously, i.e., $r>\ell+n$. Hence, the lower bandwidth of $\tilde{\mA}$ is at most $n$.

Since $1-\mu=p\gamma$,
\begin{align*}
    \left(\frac{1+x}{2}\right)^{1-\mu}
    =
    \left(\frac{1+\varphi(x)}{2}\right)^p.
\end{align*}
Thus, $\bigl((1+x)/2\bigr)^{1-\mu}$ is a polynomial of degree $p$ in $\varphi(x)$. Hence, $\tilde{\mM}[\bigl((1+x)/2\bigr)^{1-\mu}]$ has bandwidths at most $(p,p)$, implying that $\tilde{\mG}$ is lower banded with lower bandwidth at most $n+p$.

\subsection{Fast construction by recurrence}
As in the smooth linear case, the quantities $\tilde{H}_{j,k,\ell}$ can be computed using recurrence relations. We first note that the fiber
\begin{align}\label{eq:weak_H_initial_sequence}
\tilde{H}_{0,0,\ell}
    =
    2^{-\sigma}
    \int_{-1}^{1}
    (1-t)^{-\mu}
    (1+t)^\sigma
    P_\ell^{(-1,0)}\bigl(\varphi(t)\bigr)
    \,\md t
\end{align}
can be computed accurately using double-exponential quadrature~\cite{tak} and used as the initial values for recursing other fibers in the $k=0$ slice, as shown by the following theorem.

\begin{theorem}\label{thm:weakly_H_recurrence_k01}
\begin{subnumcases}{
    \tilde H_{1,0,\ell}
    =
    \label{eq:H10ell}
    }
    \displaystyle
    2\tilde H_{0,0,1} + \tilde H_{0,0,0}, & \(\ell=0\),
    \label{eq:tildeH100}
    \\
    \displaystyle
    \frac{\ell+1}{2\ell+1}\tilde H_{0,0,\ell+1}
    -
    \frac{1}{4\ell^2-1}
    \tilde H_{0,0,\ell}
    +
    \frac{\ell-1}{2\ell-1}
    \tilde H_{0,0,\ell-1},
    &
    \(\ell\geqslant1\).
    \label{eq:weak_H_recurrence_j1}
\end{subnumcases}
For $j\geqslant1$, 
\begin{subnumcases}{
    \tilde H_{j+1,0,\ell}
    =
    \label{eq:Hj+10ell}
    }
    \displaystyle
    4\tilde H_{j,0,1} + 2\tilde H_{j,0,0} - \tilde H_{j-1,0,0}, & \(\ell=0\),
    \\
    \displaystyle
    \frac{2(\ell+1)}{2\ell+1}
    \tilde H_{j,0,\ell+1}
    -
    \frac{2}{4\ell^2-1}
    \tilde H_{j,0,\ell}
    +
    \frac{2(\ell-1)}{2\ell-1}
    \tilde H_{j,0,\ell-1}
    -
    \tilde H_{j-1,0,\ell},
    &
    \(\ell\geqslant1\).
\end{subnumcases}
\end{theorem}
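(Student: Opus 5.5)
The plan is to mirror the proof of \cref{thm:H_j0ell_recurrence}, working in the variable $s=\varphi(t)$. The key fact is that $Q_j^{\sigma,\gamma}(t)=\bigl((1+t)/2\bigr)^\sigma T_j(\varphi(t))$. Hence multiplying $Q_j^{\sigma,\gamma}$ by $\varphi(t)$ behaves exactly like multiplying $T_j$ by $s$. The Chebyshev three-term recurrence \cref{eq:chebyshev_three_term} therefore gives
\begin{align*}
Q_{j+1}^{\sigma,\gamma}(t)=2\varphi(t)Q_j^{\sigma,\gamma}(t)-Q_{j-1}^{\sigma,\gamma}(t), \qquad Q_1^{\sigma,\gamma}(t)=\varphi(t)Q_0^{\sigma,\gamma}(t).
\end{align*}
Substituting these into the definition \cref{eq:weak_H_definition} with $k=0$ leaves only integrals of the form
\begin{align*}
\int_{-1}^1(1-t)^{-\mu}Q_j^{\sigma,\gamma}(t)\,\varphi(t)P_\ell^{(-1,0)}\bigl(\varphi(t)\bigr)\,\md t.
\end{align*}
Note that the weight factor $(1-t)^{-\mu}$ and the power $(1+t)^\sigma$ are untouched by these manipulations.

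For $\ell\geqslant1$, I would apply the Jacobi three-term multiplication identity \cref{eq:jacobi_multiplication} with parameters $(-1,0)$ and argument $s=\varphi(t)$. This expands $sP_\ell^{(-1,0)}(s)$ as a combination of $P_{\ell+1}^{(-1,0)}$, $P_\ell^{(-1,0)}$ and $P_{\ell-1}^{(-1,0)}$, with coefficients $(\ell+1)/(2\ell+1)$, $-1/(4\ell^2-1)$ and $(\ell-1)/(2\ell-1)$. These are the same coefficients already used in \cref{eq:H_100_recurrence}, and since the identity is algebraic in $s$, they carry over verbatim. Integrating term by term against $(1-t)^{-\mu}Q_j^{\sigma,\gamma}(t)$ then yields \cref{eq:weak_H_recurrence_j1} for $j=0$ and the $\ell\geqslant1$ branch of \cref{eq:Hj+10ell} for $j\geqslant1$, where the extra factor $2$ and the $-\tilde H_{j-1,0,\ell}$ term come from the Chebyshev recurrence.

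The $\ell=0$ case is the one point that differs from the smooth case, and I would check it carefully. Here $P_0^{(-1,0)}=1$, so I need $s$ written in terms of $P_0^{(-1,0)}$ and $P_1^{(-1,0)}$. By \cref{eq:jacobi_minus_one_zero,eq:jacobi_minus_one_left}, $P_1^{(-1,0)}(s)=(s-1)/2$, which gives $s=2P_1^{(-1,0)}(s)+P_0^{(-1,0)}(s)$. Using this:
\begin{itemize}
\item For $j=0$ it gives $\tilde H_{1,0,0}=2\tilde H_{0,0,1}+\tilde H_{0,0,0}$.
\item For $j\geqslant1$ it gives $\tilde H_{j+1,0,0}=2(2\tilde H_{j,0,1}+\tilde H_{j,0,0})-\tilde H_{j-1,0,0}$, which is the stated formula.
\end{itemize}
In the smooth case this branch was unnecessary because $H_{j,0,0}$ was known in closed form; here it is not, which explains why it appears.

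The main obstacle is confirming the normalization of $P_1^{(-1,0)}$ in the paper's generalized definition \cref{eq:jacobi_minus_one_left}, and checking that the coefficients of \cref{eq:jacobi_multiplication} at $\ell=1$ remain valid when $P_0^{(-1,0)}$ is involved. If they do, the $\ell=1$ formula should reproduce the direct computation above. Finally, all integrals are convergent because $\sigma>-1$ and $\mu<1$, so term-by-term integration is justified.
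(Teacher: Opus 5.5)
Your proposal is correct and follows essentially the same route as the paper: the Chebyshev recurrence for $Q_j^{\sigma,\gamma}$ in the variable $\varphi(t)$, the $(-1,0)$ Jacobi three-term recurrence for $\ell\geqslant1$, and the separate identity $sP_0^{(-1,0)}(s)=2P_1^{(-1,0)}(s)+P_0^{(-1,0)}(s)$ for $\ell=0$. The $\ell=1$ check you flag does go through: the coefficient $(\ell-1)/(2\ell-1)$ vanishes there, and indeed $sP_1^{(-1,0)}(s)=\tfrac23P_2^{(-1,0)}(s)-\tfrac13P_1^{(-1,0)}(s)$.
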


\begin{proof}
It follows from \cref{eq:jacobi_minus_one_zero,eq:jacobi_minus_one_left} that
\begin{align}\label{eq:P}
xP_0^{(-1, 0)}(x) = 2P_1^{(-1, 0)}(x)+P_0^{(-1, 0)}(x).
\end{align}
By definition,
\begin{align*}
\tilde{H}_{1,0,0} = \int_{-1}^{1} (1-t)^{-\mu} \left(\frac{1+t}{2}\right)^{\sigma} \varphi(t) P_0^{(-1,0)} \bigl(\varphi(t)\bigr) \,\md t,
\end{align*}
which, upon substitution of \cref{eq:P}, gives \cref{eq:tildeH100}.

Combining \cref{eq:jacobi_minus_one_left} and \cref{eq:jacobi_multiplication} yields
\begin{align}\label{eq:intermediate}
\varphi(t) P^{(-1,0)}_{\ell}\bigl(\varphi(t)\bigr)
    &=
    \frac{\ell+1}{2\ell+1}P^{(-1,0)}_{\ell+1}\bigl(\varphi(t)\bigr)
    -
    \frac{1}{4\ell^2-1} P^{(-1,0)}_{\ell}\bigl(\varphi(t)\bigr)
    +
    \frac{\ell-1}{2\ell-1} P^{(-1,0)}_{\ell-1}\bigl(\varphi(t)\bigr),
\end{align}
where $\ell \geqslant 1$. Multiplying \cref{eq:intermediate} by $(1-t)^{-\mu} \bigl((1+t)/2\bigr)^{\sigma}$ and integrating over $[-1, 1]$ give \cref{eq:weak_H_recurrence_j1}.

Equation \cref{eq:Hj+10ell} follows analogously.
\end{proof}

Once the zeroth slice is available, the rest of the $\tilde{H}$ tensor could be obtained recursively for increasing $k \geqslant 1$ using the following theorem.

\begin{theorem}\label{thm:weakly_H_recurrence}
For $j\geqslant0$, $k\geqslant0$, and $\ell\geqslant k+1$,
\begin{align}\label{eq:weak_H_kell_recurrence}
    \tilde H_{j,k+1,\ell}
    =
    \frac{
        (\ell-k)\tilde H_{j,k,\ell}
        +
        (\ell+k)\tilde H_{j,k,\ell-1}
        -
        (\ell-k-2)\tilde H_{j,k+1,\ell-1}
    }{
        \ell+k
    }.
\end{align}
\end{theorem}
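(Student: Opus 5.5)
The plan is to reuse the proof of \cref{thm:H_kell_recurrence} almost verbatim. That proof rests on a pointwise polynomial identity in a variable $s$, and here we simply evaluate it at $s=\varphi(t)$. The key observation is
\begin{align*}
\frac{1+\varphi(t)}{2}=\left(\frac{1+t}{2}\right)^{\gamma}.
\end{align*}
So a factor $(1+s)/2$ in a Jacobi identity becomes exactly the extra weight $\bigl((1+t)/2\bigr)^{\gamma}$ that separates the integrand of $\tilde H_{j,k+1,\cdot}$ from that of $\tilde H_{j,k,\cdot}$. The rest of the integrand in \cref{eq:weak_H_definition} is
\begin{align*}
(1-t)^{-\mu}\left(\frac{1+t}{2}\right)^{k\gamma}Q_j^{\sigma,\gamma}(t).
\end{align*}
It does not depend on $\ell$ and plays the role that $T_j(t)\bigl((1+t)/2\bigr)^k$ played in the smooth case.

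The proof splits into two cases, mirroring the earlier one.

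\emph{Boundary case $\ell=k+1$.} Here I use the identity from \cref{eq:jacobi_minus_one_zero,eq:jacobi_minus_one_left} with $\beta=2k$:
\begin{align*}
\frac{1+s}{2}P_0^{(-1,2k+1)}(s)=\frac{1}{2k+1}\bigl(P_1^{(-1,2k)}(s)+(2k+1)P_0^{(-1,2k)}(s)\bigr).
\end{align*}
Its left-hand side should read $P_0^{(-1,2k+2)}$; this is harmless because $P_0\equiv1$ regardless of parameters. I substitute $s=\varphi(t)$ and multiply by the common factor above. The factor $(1+s)/2$ then raises $\bigl((1+t)/2\bigr)^{k\gamma}$ to $\bigl((1+t)/2\bigr)^{(k+1)\gamma}$. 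Integrating over $[-1,1]$ gives
\begin{align*}
(2k+1)\tilde H_{j,k+1,k+1}=\tilde H_{j,k,k+1}+(2k+1)\tilde H_{j,k,k}.
\end{align*}
This is \cref{eq:weak_H_kell_recurrence} at $\ell=k+1$, because the term $\tilde H_{j,k+1,k}$ vanishes by definition ($k+1>k$).

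\emph{General case $\ell\geqslant k+2$.} I take the combined identity established in the proof of \cref{thm:H_kell_recurrence}:
\begin{align*}
(\ell+k)\tfrac{1+s}{2}P_{\ell-k-1}^{(-1,2k+2)}(s)+(\ell-k-2)\tfrac{1+s}{2}P_{\ell-k-2}^{(-1,2k+2)}(s)=(\ell-k)P_{\ell-k}^{(-1,2k)}(s)+(\ell+k)P_{\ell-k-1}^{(-1,2k)}(s).
\end{align*}
It holds for all real $s$, so I may set $s=\varphi(t)$. I then multiply by the common weight and integrate, using $(1+\varphi(t))/2=\bigl((1+t)/2\bigr)^{\gamma}$ to absorb the factor. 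Matching each term against \cref{eq:weak_H_definition}, with $k+1$ and indices $\ell$, $\ell-1$ on the left and $k$ with $\ell$, $\ell-1$ on the right, gives
\begin{align*}
(\ell+k)\tilde H_{j,k+1,\ell}+(\ell-k-2)\tilde H_{j,k+1,\ell-1}=(\ell-k)\tilde H_{j,k,\ell}+(\ell+k)\tilde H_{j,k,\ell-1},
\end{align*}
which rearranges to \cref{eq:weak_H_kell_recurrence}.

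The only real checks are integrability and bookkeeping. The integrand is a polynomial in $\varphi(t)$ times $(1-t)^{-\mu}(1+t)^{\sigma+k\gamma}$ times $T_j(\varphi(t))$. With $\mu<1$ and $\sigma>-1$ it is integrable, so splitting the integral term by term is legitimate. The main point of care is that the Jacobi index $\ell-k$ is tied to the power $k\gamma$ exactly as in the definition. In particular $P_{(\ell-1)-(k+1)}^{(-1,2(k+1))}=P_{\ell-k-2}^{(-1,2k+2)}$, and this matches the term in the identity.
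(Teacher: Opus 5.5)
Your proposal is correct and follows the paper's proof essentially line for line. It uses the same two-case split, the same base identity at $\ell=k+1$ together with $\tilde H_{j,k+1,k}=0$, and the same combined parameter-shift identity evaluated at $s=\varphi(t)$ and integrated against $(1-t)^{-\mu}\bigl((1+t)/2\bigr)^{k\gamma}Q_j^{\sigma,\gamma}(t)$, with $\bigl(1+\varphi(t)\bigr)/2=\bigl((1+t)/2\bigr)^{\gamma}$ absorbing the extra factor. Your correction $P_0^{(-1,2k+2)}$ matches how the paper states the base identity in this proof, and as you say it is immaterial since $P_0\equiv1$.
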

\begin{proof}
We first consider the case of $\ell=k+1$. It follows from
\cref{eq:phi,eq:jacobi_minus_one_zero,eq:jacobi_minus_one_left} that
\begin{align}\label{eq:P0P1}
\left( \frac{1+t}{2}\right)^{\gamma} P_0^{(-1, 2k+2)}\bigl(\varphi(t)\bigr) = \frac{1}{2k+1}P_1^{(-1,2k)}\bigl(\varphi(t)\bigr)+P_0^{(-1,2k)}\bigl(\varphi(t)\bigr).
\end{align}
Multiplying \cref{eq:P0P1} by $(1-t)^{-\mu}\bigl((1+t)/2\bigr)^{k\gamma}Q_j^{\sigma,\gamma}(t)$ and integrating over $[-1, 1]$ give 
\begin{align*}
    \tilde H_{j,k+1,\ell}
    =
    \frac{
        (\ell-k)\tilde H_{j,k,\ell}
        +
        (\ell+k)\tilde H_{j,k,\ell-1}
        }{\ell+k}.
\end{align*}
We immediately have \cref{eq:weak_H_kell_recurrence} by noting $\tilde H_{j,k+1,k}=0$.

For $\ell\geqslant k+2$, we specialize \cref{eq:jacobi_parameter_shift_left} to $\beta=2k+1$ and $n=\ell-k-1,\ell-k-2$ and \cref{eq:jacobi_parameter_shift_right} to $\beta=2k$ and $n=\ell-k,\ell-k-1$. Combining the four resulting identities gives
\begin{align*}
(\ell+k)\frac{1+s}{2}
    P_{\ell-k-1}^{(-1,2k+2)}(s)
    +
    (\ell-k-2)\frac{1+s}{2}
    P_{\ell-k-2}^{(-1,2k+2)}(s)
    =
    (\ell-k)P_{\ell-k}^{(-1,2k)}(s)
    +
    (\ell+k)P_{\ell-k-1}^{(-1,2k)}(s),
\end{align*}
where $s=\varphi(t)$. Using the identity $\bigl(1+\varphi(t)\bigr)/2=\bigl((1+t)/2\bigr)^\gamma$, multiplying both sides by $(1-t)^{-\mu}\bigl((1+t)/2\bigr)^{\sigma+k\gamma}Q_j^{0,\gamma}(t)$, and integrating over $[-1,1]$ lead to \cref{eq:weak_H_kell_recurrence}.
\end{proof}

Introducing the contracted quantities
\begin{align}\label{eq:weak_contractions}
\tilde{\Lambda}_{i,k,\ell} = \sum_{j=0}^{m} \tilde{C}_{ij}\tilde H_{j,k,\ell},
\end{align}
we can rewrite \cref{eq:weak_A_entries} as
\begin{subnumcases}{
    \tilde{A}_{r\ell}
    =
    \label{eq:weak_A_entries_Lambda}
    }
    \displaystyle \sum_{i=0}^n \delta_{ri} \tilde{\Lambda}_{i,0,0}, & \(\ell=0\),
    \\
    \displaystyle
    \frac{2-\delta_{r0}}{2}
    \sum_{i=0}^{n}
    \left(
        \frac{\ell}{\ell+|i-r|}
        \tilde{\Lambda}_{i,|i-r|,\ell}
        +
        \frac{\ell}{\ell+i+r}
        \tilde{\Lambda}_{i,i+r,\ell}
    \right),
    &
    \(\ell\geqslant1\).
\end{subnumcases}
A key observation is that $\tilde{\Lambda}_{i,k,\ell}$ satisfies the same recurrence relation as \cref{eq:weak_H_kell_recurrence}.

\begin{corollary}\label[corollary]{cor:weak_Lambda_recurrence}
For $0\leqslant i\leqslant n$, $k\geqslant0$, and $\ell\geqslant k+1$,
\begin{align}\label{eq:weak_Lambda_kell_recurrence}
    \tilde{\Lambda}_{i,k+1,\ell}
    =
    \frac{
        (\ell-k)\tilde{\Lambda}_{i,k,\ell}
        +
        (\ell+k)\tilde{\Lambda}_{i,k,\ell-1}
        -
        (\ell-k-2)\tilde{\Lambda}_{i,k+1,\ell-1}
    }{
        \ell+k
    }.
\end{align}
\end{corollary}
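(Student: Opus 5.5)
This follows from \cref{thm:weakly_H_recurrence} by linearity. The recurrence \cref{eq:weak_H_kell_recurrence} holds for every $j\geqslant0$ with the same coefficients $(\ell-k)/(\ell+k)$, $1$, and $-(\ell-k-2)/(\ell+k)$. These coefficients depend only on $k$ and $\ell$, not on $j$. Fix $0\leqslant i\leqslant n$, $k\geqslant0$ and $\ell\geqslant k+1$. For each $0\leqslant j\leqslant m$ we write down \cref{eq:weak_H_kell_recurrence}, multiply it by $\tilde{C}_{ij}$, and sum over $j$.

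By the definition \cref{eq:weak_contractions}, the left-hand side becomes $\tilde{\Lambda}_{i,k+1,\ell}$. Because the coefficients are independent of $j$, the right-hand side becomes
\begin{align*}
\frac{(\ell-k)\tilde{\Lambda}_{i,k,\ell}+(\ell+k)\tilde{\Lambda}_{i,k,\ell-1}-(\ell-k-2)\tilde{\Lambda}_{i,k+1,\ell-1}}{\ell+k}.
\end{align*}
The sum over $j$ is finite, so exchanging it with the three-term combination needs no justification. The denominator $\ell+k\geqslant1$ is never zero.

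The only point needing care is the boundary case $\ell=k+1$. There the term $\tilde{\Lambda}_{i,k+1,k}$ appears, with coefficient $-(\ell-k-2)=1$. By \cref{thm:weakly_singular_volterra_matrix}, $\tilde{H}_{j,k+1,k}=0$ for every $j$, so $\tilde{\Lambda}_{i,k+1,k}=\sum_j\tilde{C}_{ij}\cdot0=0$. This matches the convention already used in the proof of \cref{thm:weakly_H_recurrence}. We will also note that the range of indices is the same as in \cref{thm:weakly_H_recurrence}, so no additional cases arise.
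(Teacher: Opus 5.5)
Your proposal is correct and matches the paper's own one-line argument: multiply \cref{eq:weak_H_kell_recurrence} by $\tilde{C}_{ij}$ and sum over $j$, which works because the coefficients depend only on $k$ and $\ell$. Your explicit check that $\tilde{\Lambda}_{i,k+1,k}=0$ in the boundary case $\ell=k+1$ is a sound extra detail that the paper leaves implicit.
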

\begin{proof}
The result follows by multiplying \cref{eq:weak_H_kell_recurrence} by $\tilde{C}_{ij}$ and summing over $j$.
\end{proof}

This corollary suggests that instead of computing $\tilde{H}_{j,k,\ell}$ by \cref{eq:weak_H_kell_recurrence} and forming $\tilde{\mA}$ following \cref{eq:weak_A_entries} we can compute $\tilde{H}_{j,0,\ell}$ by \cref{eq:H10ell,eq:Hj+10ell} and $\tilde{\Lambda}_{i,0,\ell}$ by the definition, recurse with \cref{eq:weak_Lambda_kell_recurrence} for the remaining slices of $\tilde{\Lambda}_{i,k,\ell}$, and obtain $\tilde \mA$ by \cref{eq:weak_A_entries_Lambda}. This dispenses with the calculation of $\tilde H_{j,k,\ell}$ for $k \geqslant 1$ and the summation over the index $j$ in \cref{eq:weak_A_entries_ellpos}. The strategy is illustrated in \Cref{fig:weak-fast-recurrence}, and the complete algorithm for computing the $N \times N$ section of $\tilde{\mG}$ is given by \Cref{alg:weakly_singular_volterra_matrix}. The overall complexity is $\mO(nN^2)$, provided that $N \gg m,n,p$. If $\tilde{\mA}$ were constructed via \cref{eq:weak_H_kell_recurrence}, the cost would be $\mO(mnN^2)$.

\begin{figure}[t]
\centering
\WeakFastRecurrenceGraphic
\caption{Construction of the contracted moments $\tilde\Lambda_{i,k,\ell}$. We first compute $\tilde H_{j,0,\ell}$ of the $k=0$ slice by recursing with $j$. The auxiliary moments are unshaded. After forming $\{\tilde{\Lambda}_{i,0,\ell}\}_{i,\ell}$, we recurse for $\tilde\Lambda_{i,k,\ell}$ for incremental $k \geqslant 1$.}
\label{fig:weak-fast-recurrence}
\end{figure}

\begin{algorithm}[t]
\caption{Construction of the spectral approximation to a weakly singular linear Volterra operator}
\label{alg:weakly_singular_volterra_matrix}
\begin{algorithmic}[1]

\REQUIRE
Kernel $K(x,y)$, parameters $\mu$, $\sigma$, $\gamma$, $p=\frac{1-\mu}{\gamma}\in\mathbb N_+$, quadrature size $q$, truncation size $N$.

\ENSURE
The $N\times N$ principal finite section of the spectral approximation $\tilde{\mG}$.

\STATE Compute the CFP coefficients $\{\tilde{C}_{ij}\}_{i,j=0}^{n,m}$ in \cref{eq:weak_kernel_cfp_expansion}.
\hfill $\triangleright\ \mO\!\bigl(nm(\log n+\log m)\bigr)$

\STATE Compute $\tilde H_{0,0,\ell}$ in \cref{eq:weak_H_initial_sequence} by double-exponential quadrature.
\hfill $\triangleright\ \mO\!\bigl(q(N+m)\bigr)$

\STATE Compute $\tilde H_{j,0,\ell}$ using \cref{eq:H10ell,eq:Hj+10ell}.
\hfill $\triangleright\ \mO\!\bigl(m(N+m)\bigr)$

\STATE Form $\tilde\Lambda_{i,0,\ell}$ following \cref{eq:weak_contractions}.
\hfill $\triangleright\ \mO(nmN)$

\STATE Generate $\tilde\Lambda_{i,k,\ell}$ by recursion using \cref{eq:weak_Lambda_kell_recurrence}.
\hfill $\triangleright\ \mO(nN^2)$ 

\STATE Assemble $\tilde{\mA}$ using \cref{eq:weak_A_entries_Lambda}.
\hfill $\triangleright\ \mO(nN^2)$

\STATE Form $\tilde{\mG}$ by \cref{eq:weakly_singular_volterra_matrix}.
\hfill $\triangleright\ \mO(pN^2)$
\end{algorithmic}
\end{algorithm}

\section{Numerical examples}\label{sec:numerical_experiments}
All numerical experiments in this section are performed in Julia 1.11.1 on a computer with a 13th Gen Intel(R) Core(TM) i7-13650HX processor at 2.60 GHz and 16 GB of memory. The reported timings are measured using the package \texttt{BenchmarkTools.jl}. The almost-banded systems are solved using the package \texttt{FastAlmostBandedMatrices.jl}~\cite{fas}, and throughout we take $m = n$ for simplicity of discussion.

The \texttt{VoltOpt} repository~\cite{che2} contains the implementations of the proposed and benchmark methods and the code for the experiments in this section. In the rest of this section, Gutleb--Olver denotes the implementation of the algorithm in~\cite{gut2} based on the publicly available code~\cite{gut3}, with some further optimization. M{\"u}ntz--collocation denotes our implementation of the method in~\cite{hou}. The proposed spectral approximations to Volterra integral operators with smooth and weakly singular kernels are denoted by Duffy--Chebyshev and Duffy--CFP, respectively.

We denote by $u_*(x)$ the exact solution of the problem under consideration.

\subsection{A linear Volterra equation with a high-degree solution}\label{sec:example_2}
Our first example is the second-kind Volterra integral equation
\begin{equation*}
u(x)+\int_{-1}^{x}K(x,y)u(y)\,\md y=f(x),
\qquad x \in[-1, 1],
\end{equation*}
where
\begin{equation*}
K(x,y)
=
\frac{1}{\tau(x,y)}
\exp\left(-\frac{x-y}{\tau(x,y)}\right),
\qquad
\tau(x,y)=\frac32+x-\frac{y}{2}.
\end{equation*}
Although $K(x,y)$ is smooth on $T^2$, where $\tau(x,y)\geqslant1$, it is singular at $(-1,1)$. This problem cannot therefore be solved by methods that rely on extending the closed-form expression to the non-causal triangle \cite{hal1,dri}.

The right-hand side is chosen so that the exact solution is
\begin{equation*}
u_\ast(x)
=
1+a_1 S_1(x-x_1)+ a_2 S_2(x-x_2),
\qquad
S_k(z)=\frac12\left(1+\tanh\frac{z}{\delta_k}\right)
\end{equation*}
with $x_1=-0.35$, $x_2=0.42$, $a_1=1$, $a_2=-0.6$, $\delta_1=10^{-3}$, and $\delta_2=1.5\times10^{-3}$. The small values of $\delta_1$ and $\delta_2$ produce two narrow transition layers, which test the efficiency of the proposed method as a high truncation degree is required for adequate resolution.

With the approximation to the smooth linear Volterra operator constructed by \Cref{alg:cheb_volterra_matrix}, the almost-banded system to solve is
\begin{equation*}
(\mI+\mG)\bu=\bf,
\end{equation*}
where $\bf$ denotes the coefficient vector of $f(x)$.

The computed solution is evaluated by the Clenshaw algorithm and compared with $u_*(x)$ on a uniform grid of $10^4$ equispaced points on $[-1,1]$. As shown in \Cref{fig:ex2_error}, the maximum error obtained on this grid decays exponentially for both methods. However, for the same $N$, the solution obtained by the proposed method is a few orders of magnitude more accurate than that obtained by the Gutleb--Olver method. Whereas the convergence of the Gutleb--Olver method ceases to improve at about $\mO(10^{-13})$, the error of the proposed method levels off at approximately machine precision. To fully resolve the solution at machine precision, the computed solution must be a Chebyshev series of degree higher than about $18,900$, due to the transition layers.

As shown by \Cref{fig:ex2_time}, the proposed method is roughly $10\times$ faster throughout the selected range of $N \in [10^2, 2\times 10^4]$. The reported times include both the construction and solution of the linear system.

\begin{figure}[htpb]
\centering
\subfloat[]{
\includegraphics[width=0.45\linewidth]{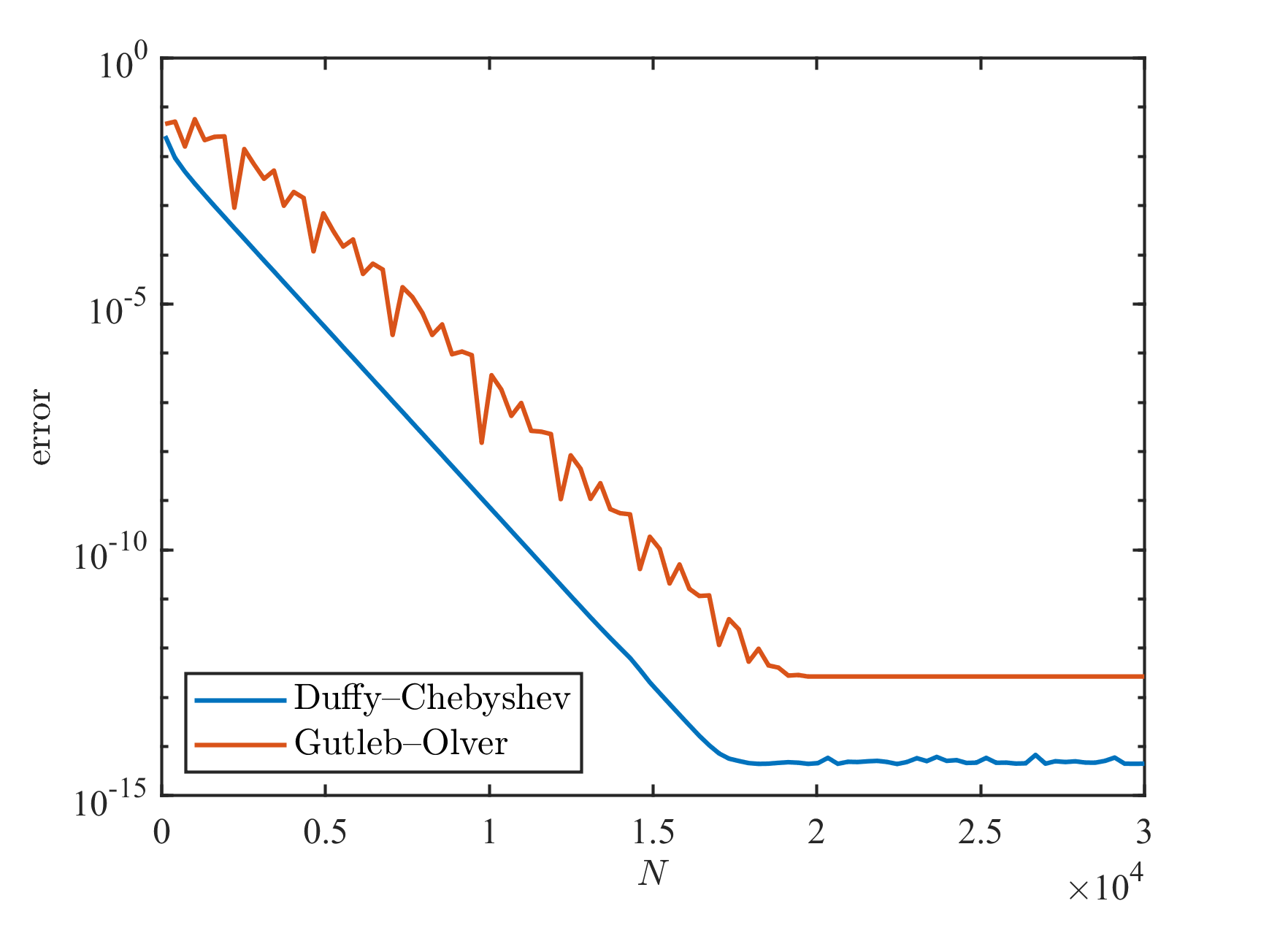}
\label{fig:ex2_error}
}
\hspace{0.1cm}
\subfloat[]{
\includegraphics[width=0.45\linewidth]{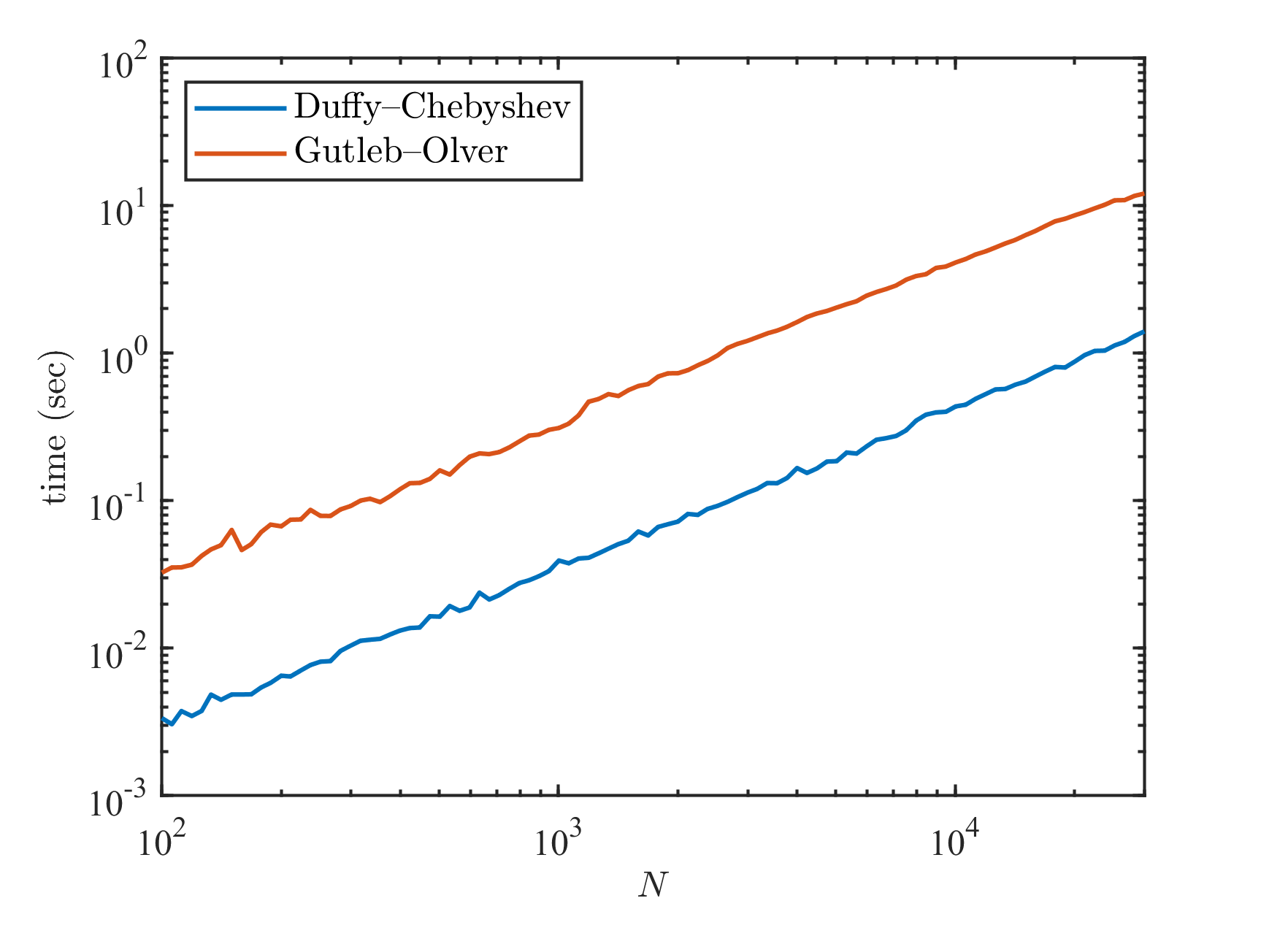}
\label{fig:ex2_time}
}
\caption{
(A) maximum pointwise errors versus $N$. (B) total execution times.
}
\label{fig:example_2}
\end{figure}

\subsection{A second-kind Volterra equation with an oscillatory kernel}\label{sec:example_3}
Consider the second-kind Volterra integral equation
\begin{align*}
u(x)+\int_0^x \cos(\omega xy)u(y)\,\md y=f(x),
\qquad x\in[0,1],
\end{align*}
where $\omega=20$ and
\begin{align*}
f(x)
=
\cos(50x)
+\frac{x}{2}
\Bigl(
\operatorname{sinc}\bigl((\omega x-50)x\bigr)
+
\operatorname{sinc}\bigl((\omega x+50)x\bigr)
\Bigr),
\end{align*}
where $\operatorname{sinc}(x) = \sin(x) / x$. The exact solution is $u_*(x)=\cos(50x)$.

We fix the truncation size at $N=200$, which is sufficient for the solution to be fully resolved, and vary the truncation size $n$ of the bivariate approximant from $2$ to $60$. For each $n$, the computed solutions are evaluated on $2\times 10^4$ equispaced points in $[0,1]$. As shown in \Cref{fig:ex3_error}, the proposed method is also spectrally convergent with respect to the truncation size of the bivariate approximant $n$ until the error plateaus at machine precision. This is entirely as expected, since the transformed kernel functions are analytic and hence admit exponentially convergent bivariate Chebyshev approximations.

Since the truncation size $n$ of the bivariate Chebyshev approximant to the transformed kernel function coincides with that of the series of bivariate polynomials orthogonal on $T^2$ that approximates the original kernel function on $T^2$, we perform the same experiment for the Gutleb--Olver method and show the result in the same plot. Unlike the proposed method, the Gutleb--Olver solution converges only up to $\mO(10^{-7})$ and quickly diverges afterwards. This surprising instability is in fact caused by the operator-valued Clenshaw algorithm used in the Gutleb--Olver construction, where the approximation errors in the kernel coefficients accumulate and are amplified as Clenshaw's algorithm progresses.

\Cref{fig:ex3_time} shows that for fixed $N$ the total execution times of the Gutleb--Olver and proposed methods scale as $\mO(n^3)$ and $\mO(n^2\log n)$, respectively, with the construction costs dominating the total cost of both methods. The kinks in the blue curve are attributed to the implementation of FFT in the FFTW library.

\begin{figure}[htpb]
\centering
\subfloat[]{
\includegraphics[width=0.45\linewidth]{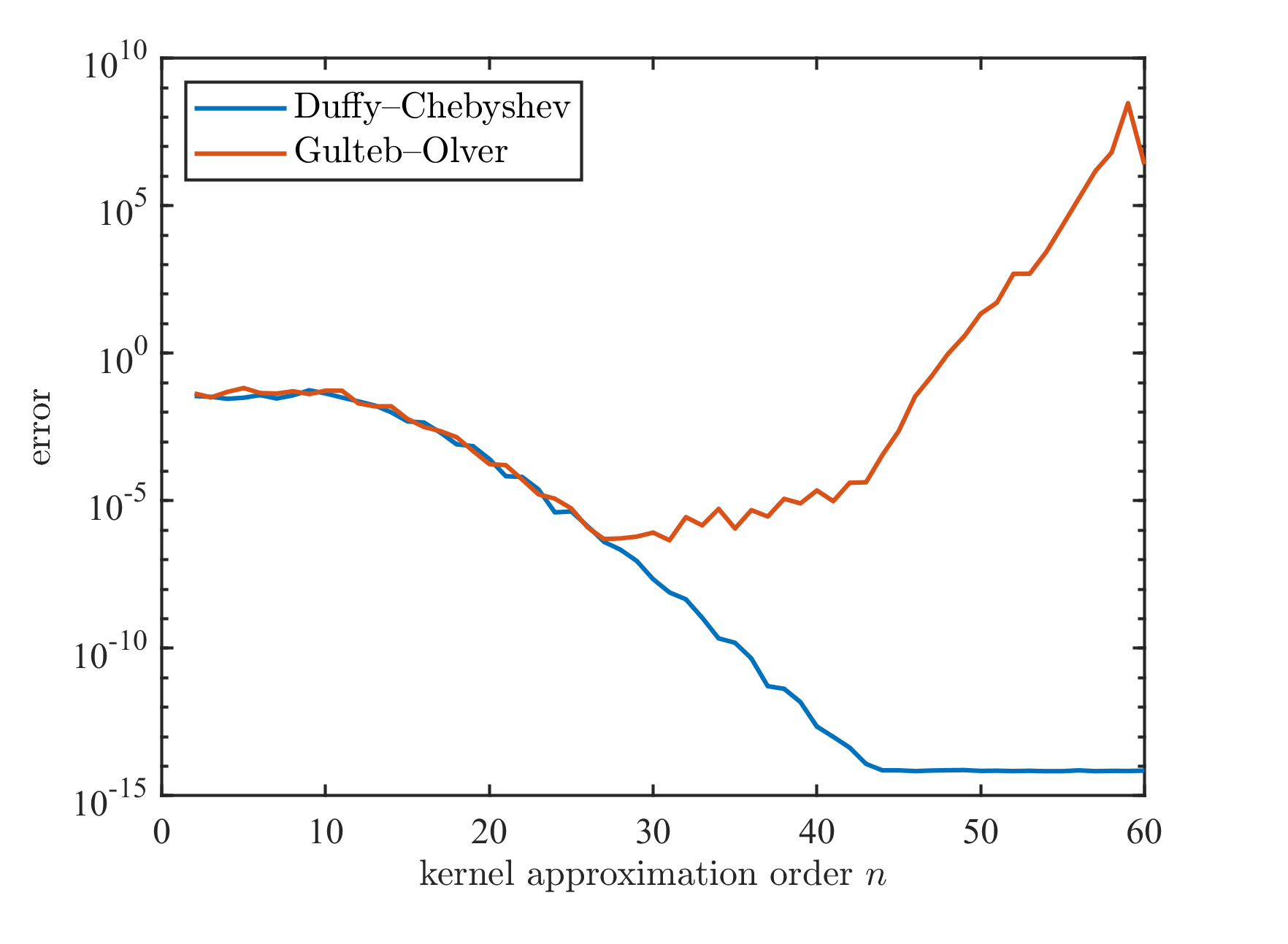}
\label{fig:ex3_error}
}
\hspace{0.1cm}
\subfloat[]{
\includegraphics[width=0.45\linewidth]{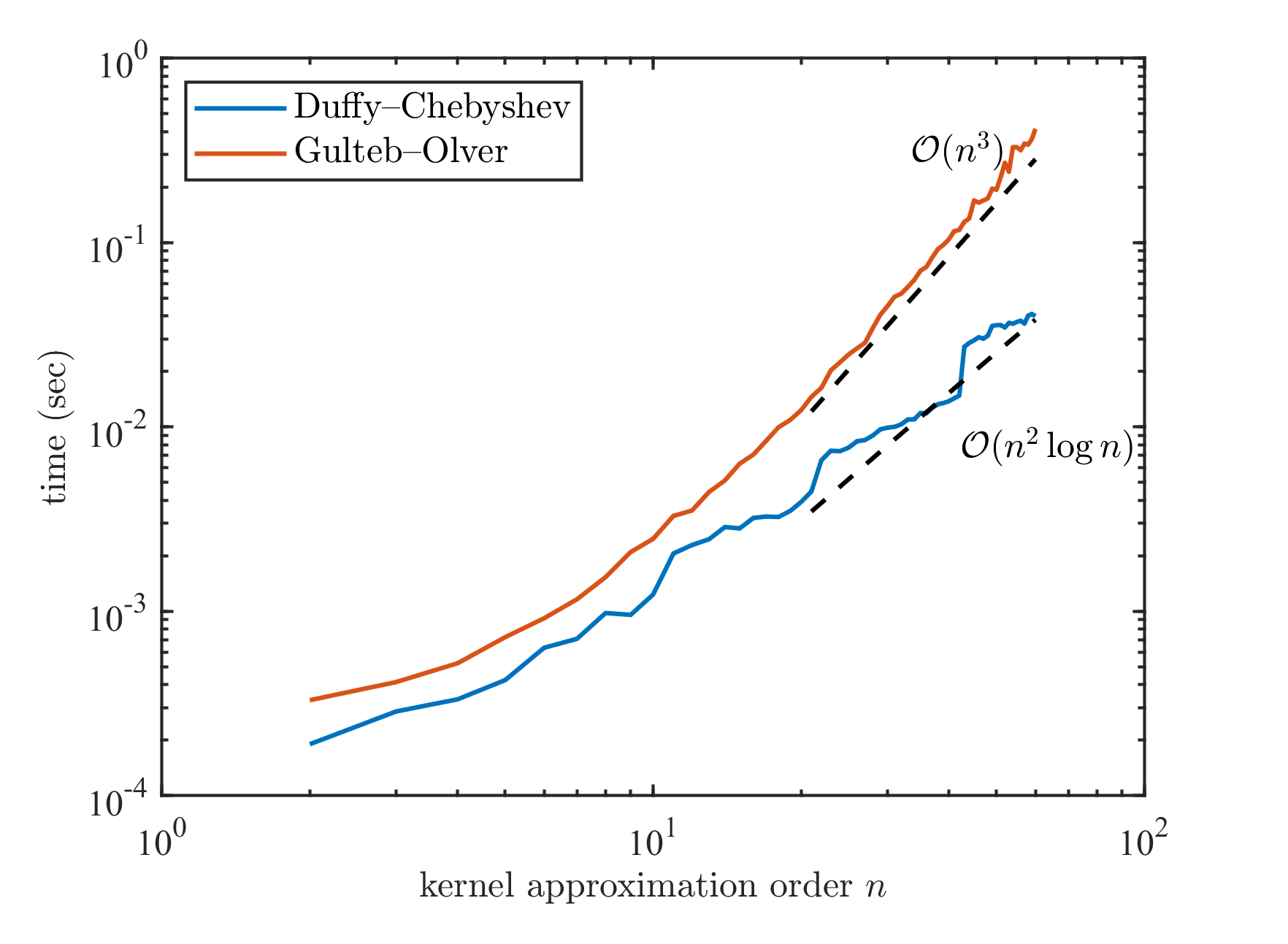}
\label{fig:ex3_time}
}
\caption{
(A) maximum errors versus the kernel approximation degree. (B) total execution times.
}
\label{fig:example_3}
\end{figure}

\subsection{A first-kind pantograph Volterra integral equation}\label{sec:example_4}
The third example is a pantograph Volterra convolution integral equation of the first kind
\begin{equation}\label{eq:proportional_delay_first_kind}
\int_{qx}^{x}K(x,y)u(y)\,\md y=f(x), \qquad x\in[0,1],
\end{equation}
where $0< q <1$. Such equations go back to Volterra~\cite{vol} and were extensively studied by Brunner~\cite{bru3,bru2}; more recent numerical treatments include~\cite{hal2,xie}. In this example, we take $K(x,y)=\exp\bigl(-(x-y)^2\bigr)$ and $q=0.5$, and the right-hand side is chosen so that the exact solution is $u_*(x)=e^x\cos(20x)$. As a Volterra convolution equation, \cref{eq:proportional_delay_first_kind} can be solved by replacing the Volterra convolution operator with its infinite-dimensional spectral approximation and $f(x)$ with the corresponding Chebyshev coefficient vector and solving the resulting linear system adaptively \cite{xu}. However, a numerical solution of a first-kind integral equation is rarely stable. Therefore, we differentiate \cref{eq:proportional_delay_first_kind}, split the resulting integral at the origin, and apply the change of variables $y=qs$. This gives rise to the Volterra integral equation of the second kind
\begin{align*}
K(x,x)u(x)
-qK(x,qx)u(qx)
+
\int_0^x \partial_xK(x,s)u(s)\,\md s
-
q\int_0^x \partial_xK(x,qs)u(qs)\,\md s
=
f'(x).
\end{align*}
The first integral remains convolutional, whereas the scaled kernel in the second integral depends on $x-qs$ and is not convolutional. The proportional delay also appears through $u(qx)$ and $u(qs)$.

We approximate $K(x, x)$ and $K(x, qx)$ by univariate Chebyshev series and construct the corresponding multiplication matrices $\mM^{\dagger}$ and $\mM^{\ddagger}$ respectively. Next, we construct the spectral approximations to the Volterra integral operators defined by $\partial_xK(x,s)$ and $\partial_xK(x,qs)$ following \Cref{alg:cheb_volterra_matrix}, respectively, as well as the spectral approximation to the restriction operator that restricts a function defined on $[0, 1]$ to $[0, q]$ following \cref{eq:five_point_recurrence}. Denoting these approximations by $\mV^{\dagger}$, $\mV^{\ddagger}$, and $\mR$ respectively, we have the infinite-dimensional linear system
\begin{align*}
\left(\mM^{\dagger} - q\mM^{\ddagger}\mR + \mV^{\dagger} -q \mV^{\ddagger}\mR\right)\mathbf{u} = \mathbf{f},
\end{align*}
where $\mathbf{u}$ and $\mathbf{f}$ are the Chebyshev coefficient vectors of the solution and $f'(x)$ respectively. Since this linear system is lower-banded, it can be solved by adaptive QR for an optimal truncation of the solution vector. Whereas the state-of-the-art method incurs a cost of $\mO(N^3)$ for solving such problems \cite{hal2}, both the construction and solution complexities are $\mO(N^2)$ using the proposed approximation.

\Cref{fig:ex4_error} shows the maximum error obtained on a grid of $2\times 10^3$ equispaced points in $[0,1]$. The computed solution converges super-geometrically until the error levels off at machine precision.

\begin{figure}[htpb]
\centering
\subfloat[]{
\includegraphics[width=0.45\linewidth]{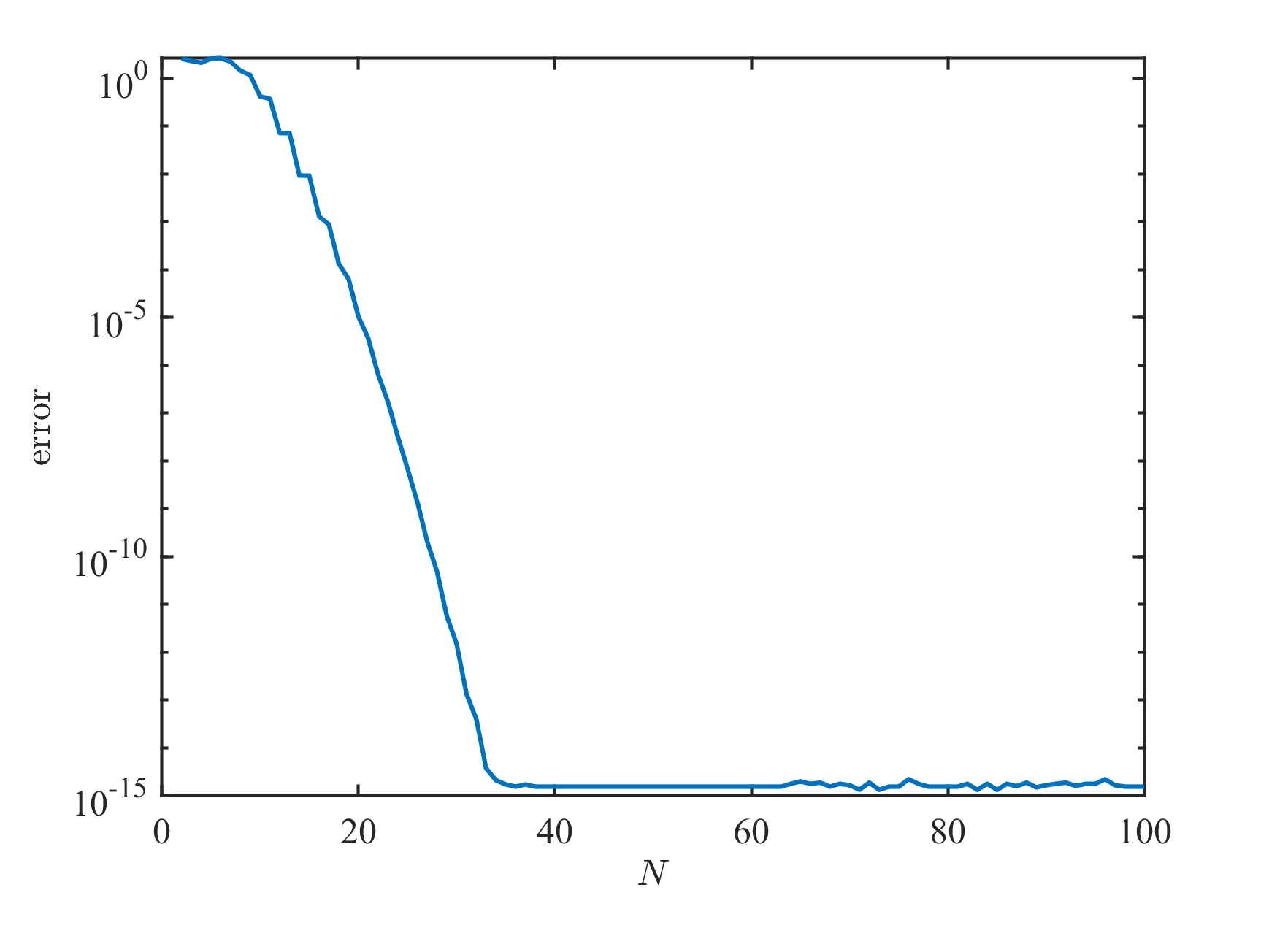}
\label{fig:ex4_error}
}
\hspace{0.1cm}
\subfloat[]{
\includegraphics[width=0.45\linewidth]{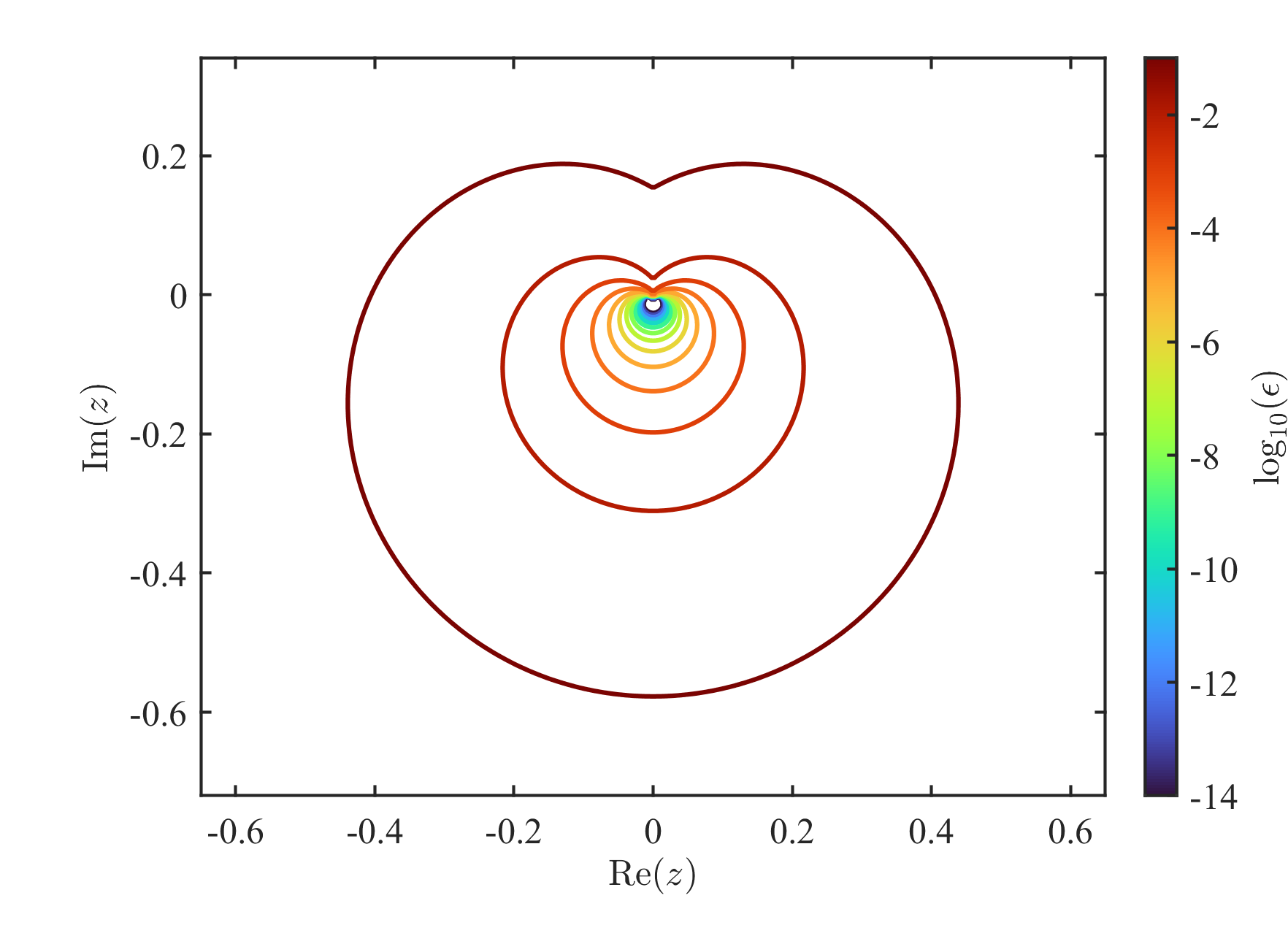}
\label{fig:ex5_pseudospectrum}
}
\caption{
(A) the maximum absolute error versus the truncation degree for the pantograph Volterra integral equation. (B) $\varepsilon$-pseudospectra of \cref{eq:smooth_volterra} with $K(x,y)=-\mathrm{i}J_1\bigl(2(x-y)+xy\bigr)$ for $\varepsilon = 10^{-1}, 10^{-2}, \ldots, 10^{-14}$.} \label{fig:example_45}
\end{figure}

\subsection{Pseudospectra of a Volterra integral operator}\label{sec:example_5}
A Volterra integral operator with a smooth kernel is quasinilpotent, and $0$ is the only element in its spectrum \cite{bru3}. The spectrum alone, however, offers little insight into the operator's behavior due to its nonnormality. In such scenarios, one has to resort to its pseudospectra \cite{tre}.

Consider the linear Volterra integral operator $\mV$ in \cref{eq:smooth_volterra} with
\begin{align*}
K(x,y) = -\mathrm{i}J_1\bigl(2(x-y)+xy\bigr).
\end{align*}
Here, $J_1(x)$ is the Bessel function of the first kind and $\mathrm{i}$ is the imaginary unit. We first construct the spectral approximation to $\mV$ in the Legendre basis, since the pseudospectra are usually considered with respect to the standard $L^2$ norm and the Legendre polynomials are orthogonal in $L^2([-1,1])$. The construction is analogous to the Chebyshev case, with implementation details available in the companion repository \cite{che2}. We then numerically approximate the resolvent norm of $\mV$ on a $320\times300$ equispaced tensor grid in the rectangular domain $[-0.65, 0.65]\times [-0.72\mathrm{i}, 0.34\mathrm{i}]$. The computed resolvent norms are fed into a contour plotter for the level curves shown in \Cref{fig:ex5_pseudospectrum}. These are the $\varepsilon$-pseudospectra of $\mV$ for $\varepsilon = 10^{-1}, 10^{-2}, \ldots, 10^{-14}$. The deformation of the contours from concentric circles illustrates the nonnormality of $\mV$, explaining the limited usefulness of eigenvalues or the spectrum in understanding this nonnormal Volterra integral operator.

\subsection{A nonlinear Volterra integral equation}\label{sec:example_6}
Consider the nonlinear Volterra integral equation of Urysohn type
\begin{align}
u(x)
=
g(x)
+
\int_{-1}^{x}
\frac{u(t)}
{1+(x-t)u^2(t)}
\,\md t,
\qquad x\in[-1,1],
\label{eq:nonlinear_volterra_example}
\end{align}
where $g(x)$ is chosen so that the exact solution is $u_*(x)=\sqrt{W(e^{\alpha x})}$. Here, $W(\cdot)$ is the Lambert $W$ function. Notably, the nonlinear kernel has a saturating dependence on both the solution and the time lag. Nonlinear Volterra--Urysohn integral equations arise in models with nonlinear history dependence; related theory and numerical methods can be found in~\cite{bru3}. Since the solution is coupled with the kernel function, such a problem is intractable for the method proposed in \cite{gut2}.

We take $\alpha=100$, which results in a rapidly varying solution and therefore requires a high-degree approximation. The problem is solved by adaptive Picard iteration, initialized with the Chebyshev series approximating $g(x)$. The solution progress is shown in \Cref{fig:nonlinear_picard_iteration}, where the horizontal axis denotes the number of iterations, including both the inner iterations for adaptively determining $N$ and the outer Picard iterations for resolving the nonlinearity \cite{qin}. The blue curve, with the left vertical axis as reference, shows the convergence of the Picard iteration. As indicated by the dots, $14$ Picard iterations are performed, whereas in the remaining inner iterations only the truncation degree $N$ of the solution is doubled to improve its resolution. The red curve, with the right vertical axis as reference, shows the adaptive growth of $N$---from $2^5$ to $2^{10}$ over the first five inner iterations. The final maximum error, obtained on an equispaced grid of $10^3$ points, is $7.11\times10^{-15}$.

\Cref{fig:nonlinear_time} compares the execution times of the proposed method for fast evaluation of nonlinear Volterra integral and the conventional method based on Clenshaw's algorithm for the nonlinear integral in \cref{eq:nonlinear_volterra_example}. The timings are consistent with the $\mO(N^2\log N)$ and $\mO(N^3)$ complexities. The speedup is $90\times$ at $N=2^{10}$.

\begin{figure}[htpb]
\centering
\subfloat[]{\includegraphics[width=0.45\linewidth]{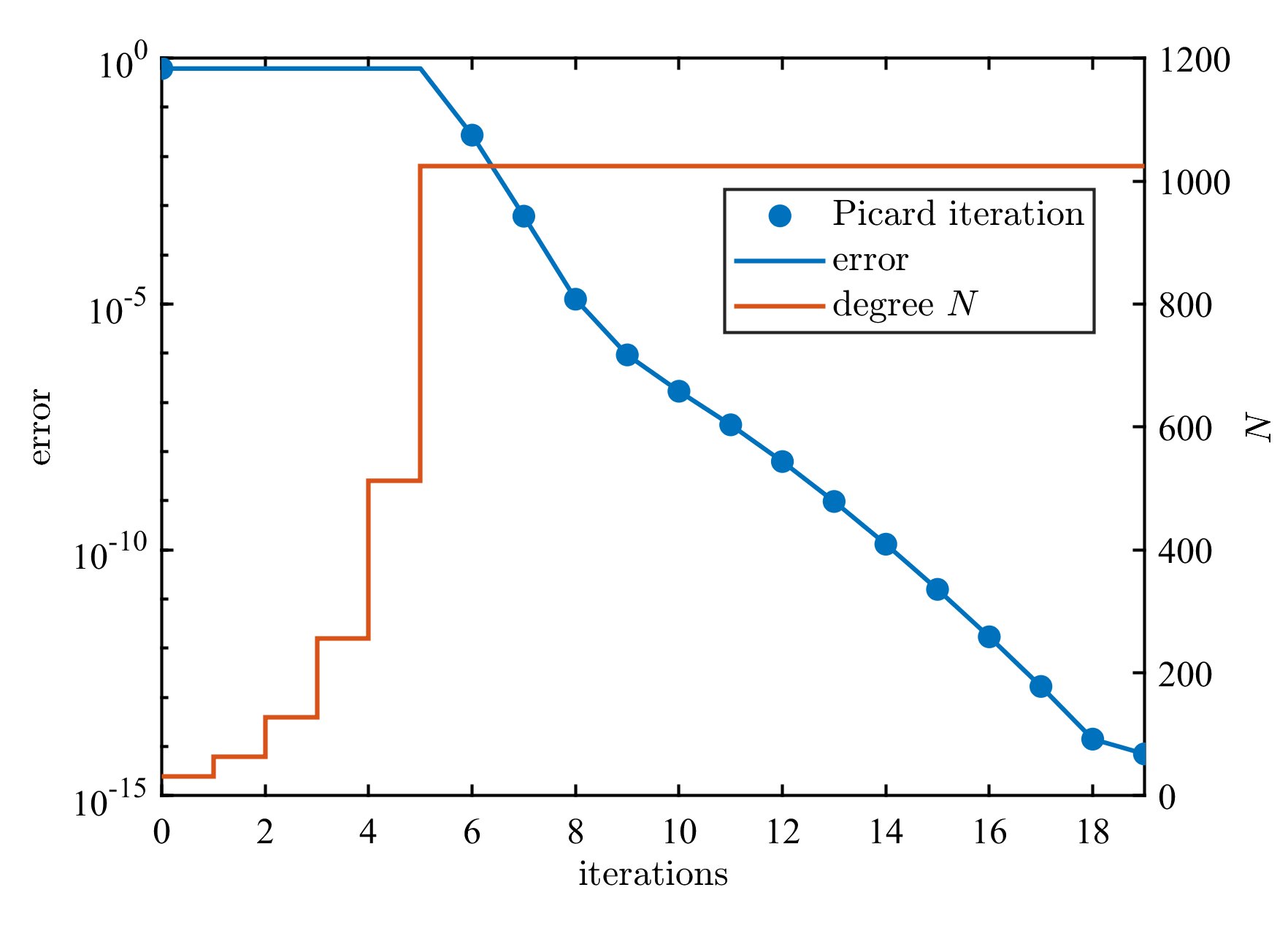}\label{fig:nonlinear_picard_iteration}}
\hspace{0.1cm}
\subfloat[]{\includegraphics[width=0.45\linewidth]{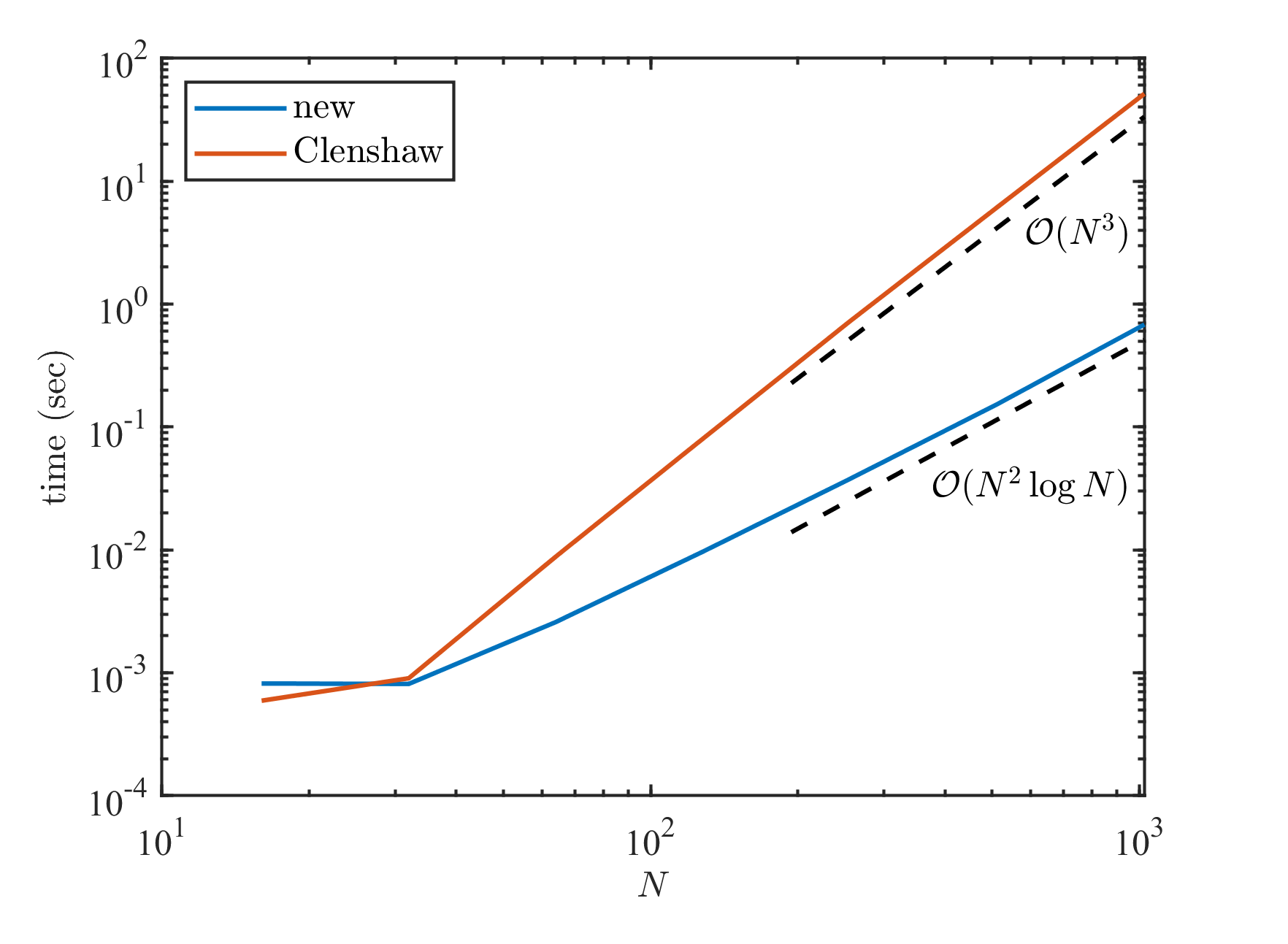}\label{fig:nonlinear_time}}
\caption{(A) History of the Picard iteration. (B) Execution time of the proposed method for evaluating the nonlinear Volterra integral in \cref{eq:nonlinear_volterra_example}, compared with the conventional Clenshaw-based method.}
\label{fig:example_6}
\end{figure}

\subsection{A weakly singular Volterra integral equation}\label{sec:example_7}
Our last example is the weakly singular Volterra equation
\begin{align*}
u(x)
+
\frac{1}{2}\int_{-1}^{x}
(x-y)^{-1/2}K(x,y)u(y)\,\md y
=
g(x),
\qquad x\in[-1,1],
\end{align*}
where
\begin{align*}
K(x,y)=\frac{1}{(x+1)^2+(y-1)^2},
\qquad
g(x)=
\left(\frac{1+x}{2}\right)^{1/3}
\left(
1+\frac{\sin(\pi x)}{10^3x^2+1}
\right).
\end{align*}
Although $K(x,y)$ is smooth on the causal domain $T^2$, it has a singularity at $(-1,1)$, which prevents the problem from being tackled by methods relying on extension.

We use the CFP basis with $\sigma=1/3$ and $\gamma=1/2$ and benchmark against the M{\"u}ntz--collocation method \cite{hou}, for which we set the parameter $\lambda=1/6$. As a closed-form solution is unavailable, the convergence is assessed by a value-based Cauchy error. For each computed solution of degree $N$, we also compute a solution of degree $N^+=\max(N+1,\lfloor 1.01N\rceil)$. The two approximate solutions are evaluated on a common test grid of $10^4$ equispaced points in $x\in[-1,1]$, and $\max_j|u_N(x_j)-u_{N^+}(x_j)|$ is taken as the Cauchy error. As shown in \Cref{fig:ex7_error}, though both methods exhibit geometric convergence, the proposed method converges faster and is a few orders of magnitude more accurate than the M{\"u}ntz--collocation method at the same $N$.

\Cref{fig:ex7_time} shows the execution times versus $N$, consistent with the predicted $\mO(N^2)$ scaling. The breakeven point of the two methods occurs at about $N = 100$, which is favorably small.

\begin{figure}[htpb]
\centering
\subfloat[]{\includegraphics[width=0.45\linewidth]{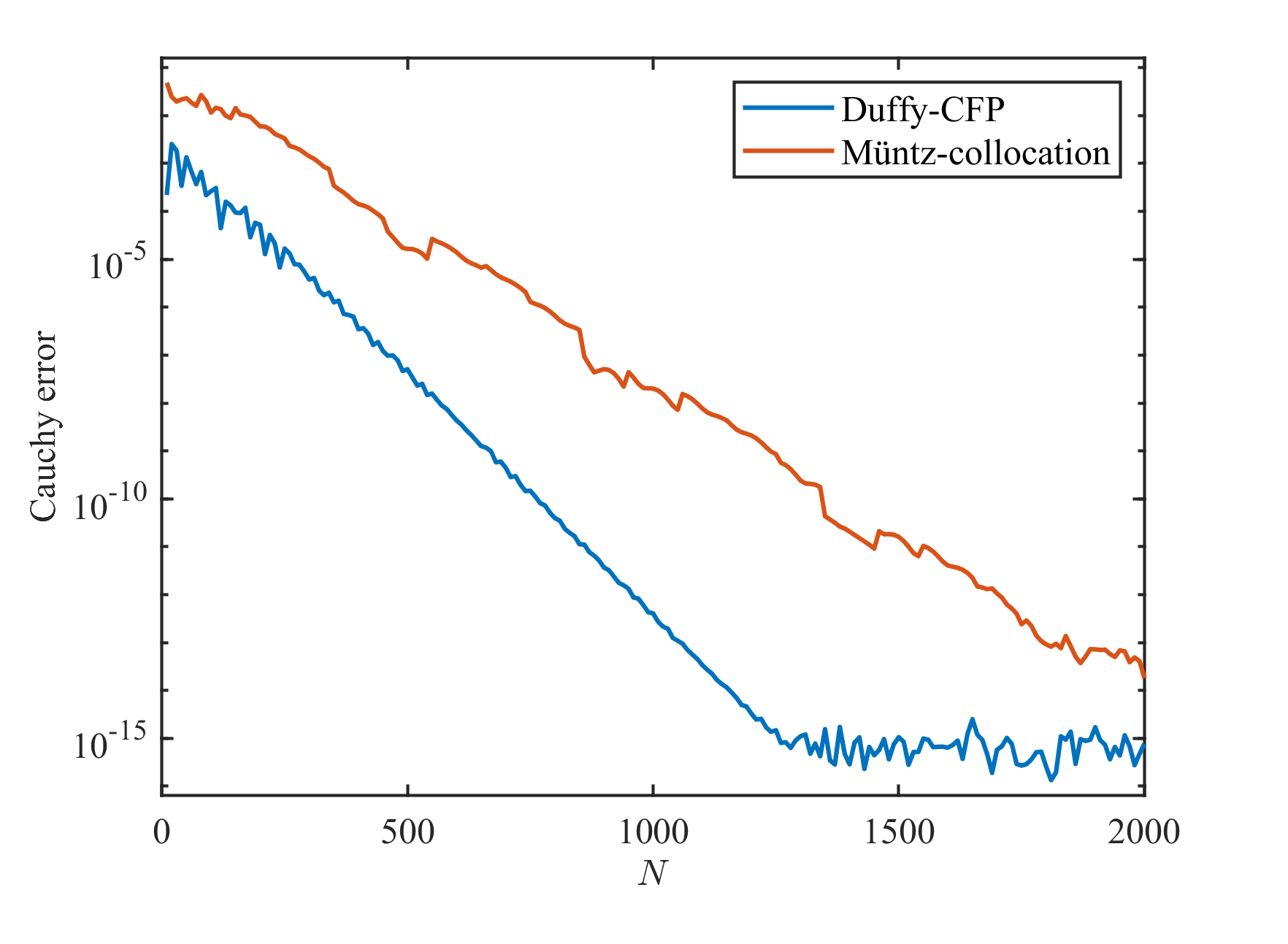}\label{fig:ex7_error}}
\hspace{0.1cm}
\subfloat[]{\includegraphics[width=0.45\linewidth]{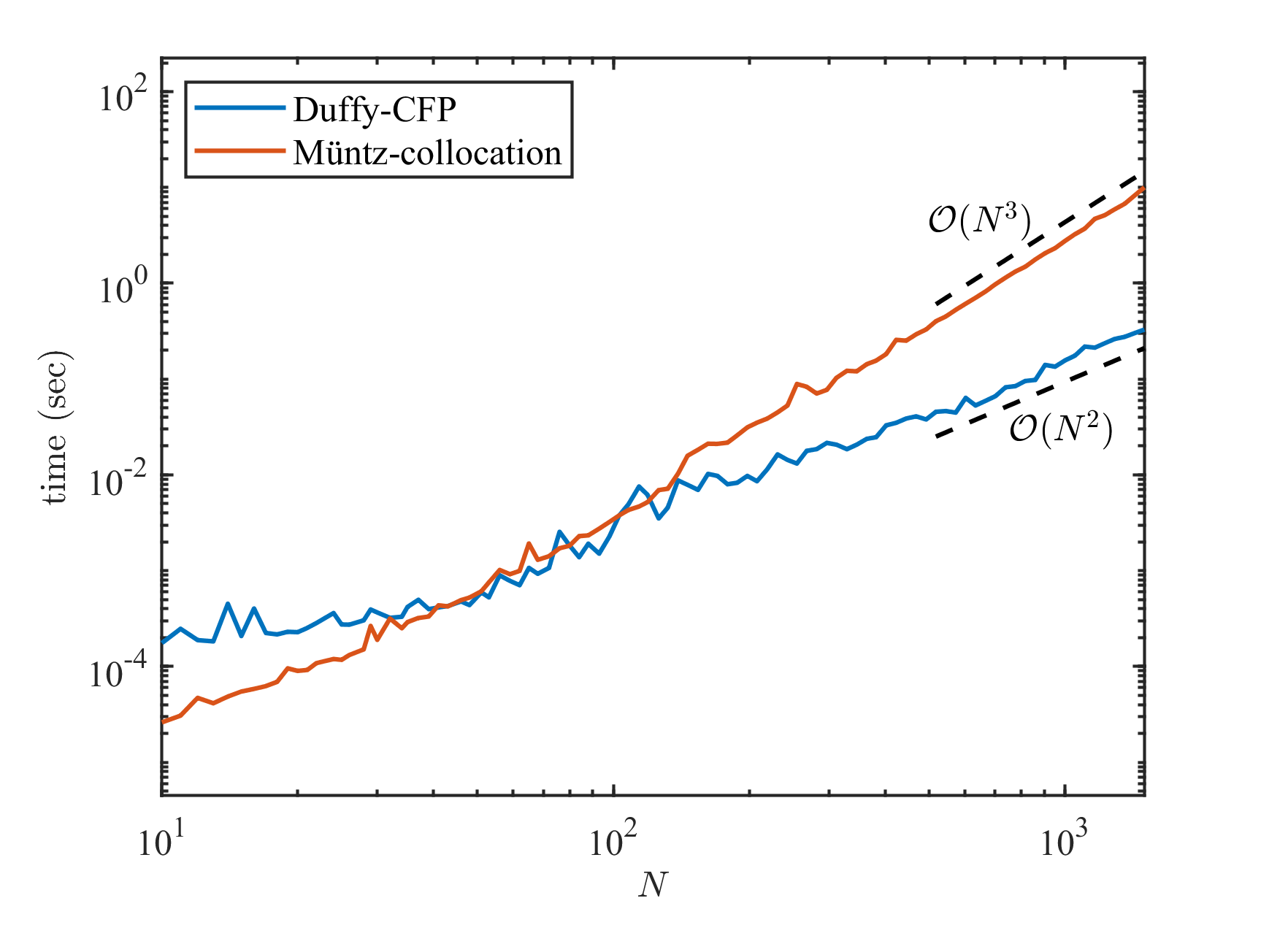}\label{fig:ex7_time}}
\caption{(A) Cauchy errors. (B) execution times.}
\label{fig:example_7}
\end{figure}

\section{Closing remarks}
This paper proposes a fast spectral framework for Volterra integral operators based on the Duffy transform and restriction operators. The method converts the approximation of kernels on triangular causal domains into spectral approximations on standard tensor-product domains. It then exploits recurrence relations to construct sparse matrix approximations of Volterra integral operators with smooth or weakly singular linear kernels and to accelerate the evaluation of nonlinear Volterra integrals. The resulting approximations lead to fast and well-conditioned spectral methods that outperform existing approaches in the tested examples, as demonstrated by the extensive numerical experiments.

Although smooth nonlinear and weakly singular linear Volterra integral operators are our focus, there is no fundamental obstacle to extending the method to singular nonlinear Volterra integral operators. We omit the discussion here, as few applications of such operators are known.

The restriction operator has many more applications than those considered here---it plays a crucial role in functional and delay differential equations, composition, inversion, and rootfinding of series expansions in classical orthogonal polynomials, among others. We defer these investigations to future works.


\appendix
\crefalias{section}{appendix}
\section{Identities for classical orthogonal polynomials}\label{sec:appendix}
The following identities are standard and can be found in classical references on orthogonal polynomials; see, for example,~\cite{sze,and}.
\begin{lemma}[Identities of Chebyshev and Jacobi polynomials]
    \label{lem:classical_polynomial_identities}
    For the Chebyshev polynomials of the first kind,
    \begin{subequations}
        \begin{align}
            T_{n+1}(x)
            &=2xT_n(x)-T_{n-1}(x),
            \qquad n\geqslant1,
            \label{eq:chebyshev_three_term}\\
            T_m(x)T_n(x)
            &=\frac{1}{2}
            \bigl(
                T_{m+n}(x)+T_{|m-n|}(x)
            \bigr),
            \qquad m,n\geqslant0,
            \label{eq:chebyshev_product}\\
            \int_{-1}^{1}
            T_m(x)T_n(x)\frac{\md x}{\sqrt{1-x^2}}
            &=\frac{\pi}{2-\delta_{m0}} \delta_{mn},
            \qquad m,n\geqslant0,
            \label{eq:chebyshev_orthogonality}\\
            \int_{-1}^{1}T_n(x)\,\md x
            &=
            \begin{cases}
                \displaystyle\frac{2}{1-n^2},
                & n \text{ is even},\\
                0,
                & n \text{ is odd},
            \end{cases}
            \qquad n\geqslant0,
            \label{eq:chebyshev_integral}\\
            \frac{T_i(x)}{\sqrt{1-x^2}}
            &=\frac{(-1)^i2^ii!}{(2i)!}
            \frac{\md^i}{\md x^i}
            (1-x^2)^{i-\frac12},
            \qquad i\geqslant0,
            \label{eq:chebyshev_rodrigues}\\
            \frac{\md^i}{\md x^i}T_j(x)
            &=2^{i-1}j(i-1)!
            C_{j-i}^{(i)}(x),
            \qquad 1\leqslant i\leqslant j.
            \label{eq:chebyshev_derivative}
        \end{align}
    \end{subequations}
For the Jacobi polynomials with $\alpha,\beta>-1$ and the generalized Jacobi polynomials with one parameter equal to $-1$,
    \begin{subequations}
        \begin{align}
            P_n^{(\beta,\alpha)}(-x)
            &=
            (-1)^nP_n^{(\alpha,\beta)}(x),\quad
            P_n^{(\beta,-1)}(x)
            =
            (-1)^nP_n^{(-1,\beta)}(-x),
            \label{eq:jacobi_parity}
            \\
            xP_n^{(\alpha,\beta)}(x)
            &=
            a_n^{(\alpha,\beta)}
            P_{n+1}^{(\alpha,\beta)}(x)
            +
            b_n^{(\alpha,\beta)}
            P_n^{(\alpha,\beta)}(x)
            +
            c_n^{(\alpha,\beta)}
            P_{n-1}^{(\alpha,\beta)}(x),
            \label{eq:jacobi_multiplication}\\
            P_0^{(-1,\beta)}(x)
            &=P_0^{(\beta,-1)}(x)
            =1,
            \label{eq:jacobi_minus_one_zero}\\
            P_n^{(-1,\beta)}(x)
            &=
            \frac{n+\beta}{2n}
            (x-1)P_{n-1}^{(1,\beta)}(x),
            \qquad n\geqslant1,
            \label{eq:jacobi_minus_one_left}\\
            P_n^{(-1,\beta)}(x)
            &=
            \frac{n+\beta}{2n+\beta}
            P_n^{(-1,\beta+1)}(x)
            +
            \frac{n-1}{2n+\beta}
            P_{n-1}^{(-1,\beta+1)}(x),
            \qquad n\geqslant1,
            \label{eq:jacobi_parameter_shift_right}\\
            \frac{1+x}{2}P_n^{(-1,\beta+1)}(x)
            &=
            \frac{n+1}{2n+\beta+1}
            P_{n+1}^{(-1,\beta)}(x)
            +
            \frac{n+\beta+1}{2n+\beta+1}
            P_n^{(-1,\beta)}(x),
            \quad n\geqslant0,
            \label{eq:jacobi_parameter_shift_left}\\
            \frac{\md}{\md x}P_n^{(0,\beta)}(x)
            &=
            \frac{n+\beta+1}{2}
            P_{n-1}^{(1,\beta+1)}(x),
            \qquad n\geqslant1,
            \label{eq:jacobi_derivative}\\
            \bigl(
                (1+x)\frac{\md}{\md x}+\beta
            \bigr)P_n^{(0,\beta)}(x)
            &=
            (n+\beta)
            P_n^{(1,\beta-1)}(x),
            \qquad n\geqslant0.
            \label{eq:jacobi_differential_shift}
        \end{align}
    \end{subequations}
where $a_n^{(\alpha,\beta)}$, $b_n^{(\alpha,\beta)}$, and $c_n^{(\alpha,\beta)}$ are the coefficients in the standard three-term recurrence \cref{eq:jacobi_multiplication}; see \cite[(4.5.1)]{sze} for their expressions.
\end{lemma}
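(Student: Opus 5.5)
The plan is to treat the lemma as a collection of classical facts and verify each from one of two standard representations. Chebyshev identities use the trigonometric form $T_n(\cos\theta)=\cos n\theta$. Jacobi identities use the hypergeometric (power-series) form of $P_n^{(\alpha,\beta)}$. The generalized case $\alpha=-1$ is reached by continuation in the parameter.

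For the Chebyshev identities, substituting $x=\cos\theta$ turns \cref{eq:chebyshev_three_term} and \cref{eq:chebyshev_product} into the sum-to-product formulas $\cos(n\pm1)\theta$ and $\cos m\theta\cos n\theta$. The same substitution turns \cref{eq:chebyshev_orthogonality} into $\int_0^\pi\cos m\theta\cos n\theta\,\md\theta$. For \cref{eq:chebyshev_integral} I would compute $\int_0^\pi\cos n\theta\sin\theta\,\md\theta$ directly.

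The Rodrigues formula \cref{eq:chebyshev_rodrigues} is the Jacobi Rodrigues formula with $\alpha=\beta=-\tfrac12$, together with $T_i=\frac{i!\,2^{2i}(i!)}{(2i)!}P_i^{(-1/2,-1/2)}$ after collecting constants. For \cref{eq:chebyshev_derivative} I would start from $T_j'=jU_{j-1}=jC^{(1)}_{j-1}$. I would then iterate $\frac{\md}{\md x}C^{(\lambda)}_n=2\lambda C^{(\lambda+1)}_{n-1}$ a further $i-1$ times, which produces the factor $2^{i-1}(i-1)!$.

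For the Jacobi identities with $\alpha,\beta>-1$, the parity relation, \cref{eq:jacobi_multiplication}, the contiguous relations and \cref{eq:jacobi_derivative} are standard (Szeg\H{o} \S4.5, \S4.21). To pass to $\alpha=-1$, I will use the expansion
\[
P_n^{(\alpha,\beta)}(x)=\sum_{k=0}^n\frac{(\alpha+1)_n}{(\alpha+1)_k}\,\frac{(-n)_k(n+\alpha+\beta+1)_k}{n!\,k!}\Bigl(\frac{1-x}{2}\Bigr)^k .
\]
Its coefficients are polynomial in $\alpha$, so it defines $P_n^{(-1,\beta)}$. Every polynomial identity among these objects that holds for $\alpha>-1$ then persists at $\alpha=-1$ by continuity. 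This covers \cref{eq:jacobi_parameter_shift_right}, \cref{eq:jacobi_parameter_shift_left} and the second half of \cref{eq:jacobi_parity}. Setting $\alpha=-1$ directly gives \cref{eq:jacobi_minus_one_zero}. It also kills the $k=0$ term for $n\ge1$, so the remaining sum factors as $(x-1)$ times a polynomial. Reindexing $k\to k+1$ and comparing with the expansion of $P_{n-1}^{(1,\beta)}$ yields \cref{eq:jacobi_minus_one_left} with constant $(n+\beta)/(2n)$.

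The step I expect to need the most care is the differential shift \cref{eq:jacobi_differential_shift}, together with checking that the continued identities keep their stated normalizations. For the shift, I would expand $P_n^{(0,\beta)}$ in powers of $(1+x)/2$ via the parity relation:
\[
P_n^{(0,\beta)}(x)=(-1)^n\sum_k\frac{(\beta+1)_n}{(\beta+1)_k}\frac{(-n)_k(n+\beta+1)_k}{n!\,k!}\Bigl(\frac{1+x}{2}\Bigr)^k .
\]
The operator $(1+x)\frac{\md}{\md x}+\beta$ multiplies the $k$th term by $k+\beta$. Using $(\beta+1)_m=(\beta)_m(m+\beta)/\beta$, the factor $(\beta+1)_n(k+\beta)/(\beta+1)_k$ becomes $(n+\beta)(\beta)_n/(\beta)_k$. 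Since $n+\alpha+\beta+1$ is the same for $(0,\beta)$ and $(1,\beta-1)$, the result is exactly $(n+\beta)P_n^{(1,\beta-1)}$.
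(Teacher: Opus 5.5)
Your proposal is correct. It differs from the paper mainly in that the paper gives no argument for this lemma: it declares the identities standard and cites Szeg\H{o} and Andrews--Askey--Roy.

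Your Chebyshev checks hold with the stated normalizations. These are the trigonometric substitution, the Rodrigues formula at $\alpha=\beta=-\tfrac12$ with $T_i=\frac{4^i(i!)^2}{(2i)!}P_i^{(-1/2,-1/2)}$, and iterating $\frac{\md}{\md x}C_n^{(\lambda)}=2\lambda C_{n-1}^{(\lambda+1)}$.

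The real added value is in the parameter $-1$. The paper \emph{defines} $P_n^{(-1,\beta)}$ by \eqref{eq:jacobi_minus_one_zero}--\eqref{eq:jacobi_minus_one_left} and $P_n^{(\beta,-1)}$ by the second half of \eqref{eq:jacobi_parity}. It then uses \eqref{eq:jacobi_multiplication}, \eqref{eq:jacobi_parameter_shift_right} and \eqref{eq:jacobi_parameter_shift_left} for these polynomials without saying why relations usually proved for $\alpha,\beta>-1$ still apply. You instead define the generalized polynomials by the hypergeometric series. Its coefficients $(\alpha+k+1)_{n-k}(-n)_k(n+\alpha+\beta+1)_k/(n!\,k!)$ are polynomial in both parameters. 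You then \emph{derive} \eqref{eq:jacobi_minus_one_left}, which is the $l=1$ case of Szeg\H{o}'s formula for $P_n^{(-l,\beta)}$ in \S4.22. This shows the paper's definition agrees with the parametric continuation, so all classical relations transfer at once.

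A few points would make this airtight:
\begin{itemize}
\item Continue in the second parameter slot as well, so that $P_n^{(\beta,-1)}$ has a meaning. Alternatively, take that half of \eqref{eq:jacobi_parity} as a definition, as the paper does.
\item Continue each identity in denominator-free form, or check that its coefficients are finite at $\alpha=-1$. For \eqref{eq:jacobi_multiplication} this holds for $n\geqslant1$. However, $a_0^{(\alpha,0)}=2(\alpha+1)/\bigl((\alpha+1)(\alpha+2)\bigr)$ is a removable $0/0$, consistent with the paper treating $\ell=0$ separately (cf.\ \eqref{eq:P}).
\item Add \eqref{eq:jacobi_multiplication} explicitly to your list of continued identities. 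The paper uses it with parameters $(-1,0)$ and $(-1,2k)$, and the lemma asserts it in the generalized case.
\item In the differential shift, write $(\beta+1)_n(k+\beta)/(\beta+1)_k=(\beta+k)_{n-k+1}=(n+\beta)(\beta+k)_{n-k}$ to avoid dividing by $\beta$. This is immaterial for the paper's use with $\beta=2k+2$.
\end{itemize}
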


\makeatletter
\immediate\write\@auxout{\string\citation{bst-control}}
\makeatother

\bibliography{references}

\end{document}
